\let\oldmathchoice\mathchoice
\let\newmathchoice\mathchoice
\theoremstyle{plain}
\newtheorem{prop}{Proposition}[section]
\newtheorem*{prop*}{Proposition}
\newtheorem{lemma}[prop]{Lemma}
\newtheorem{thm}[prop]{Theorem}
\newtheorem{cor}[prop]{Corollary}
\theoremstyle{definition}
\theoremstyle{remark}
\newtheorem{remark}[prop]{Remark}
\newtheorem*{ack}{Acknowledgement}
\newcommand{\nocontentsline}[3]{}
\newcommand{\tocless}[2]{\bgroup\let\addcontentsline=\nocontentsline#1{#2}\egroup}
\let\temp\phi
\let\phi\varphi
\let\varphi\temp
\newcommand*\dif{\,\mathrm{d}}
\DeclareMathOperator{\cqq}{\coloneqq}
\DeclareMathOperator{\qqc}{\eqqcolon}
\DeclareMathOperator{\wto}{\rightharpoonup}
\newcommand{\vertiii}[1]{{\left\vert\kern-0.25ex\left\vert\kern-0.25ex\left\vert #1 
    \right\vert\kern-0.25ex\right\vert\kern-0.25ex\right\vert}}
\newcommand\N{\mathbb{N}}
\newcommand\R{\mathbb{R}}
\newcommand\C{\mathbb{C}}
\newcommand{\mc}[1]{\mathcal #1}
\newcommand{\mb}[1]{\mathbb #1}
\newcommand{\ms}[1]{\mathscr #1}
\DeclareMathOperator{\id}{Id}
\newcommand{\vecto}[2]{\begin{pmatrix} #1\\#2\end{pmatrix}}
\newcommand{\parpar}[2]{\partial_{#1} #2}
\newcommand{\eps}{\varepsilon}
\DeclareMathOperator{\Div}{div}
\DeclareMathOperator{\loc}{loc}
\def\dashint{\let\mathchoice\oldmathchoice\,\ThisStyle{\ensurestackMath{%
            \stackinset{c}{.2\LMpt}{c}{.5\LMpt}{\SavedStyle-}{%
            \SavedStyle\phantom{\int}}}%
        \setbox0=\hbox{$\SavedStyle\int\,$}\kern-\wd0}\int%
        \let\mathchoice\newmathchoice}
\DeclareMathOperator{\diam}{diam}
\DeclareMathOperator{\iso}{\mc{I}}
\numberwithin{equation}{section}
\author{\textsc{Christian Scharrer} and \textsc{Alexander West}}
\date{\today}
\title{Energy quantization for constrained Willmore surfaces}
\begin{document}
\maketitle 
\begin{abstract}
We establish an energy quantization for constrained Willmore surfaces, where the constraints are given by area, volume, and total mean curvature, assuming that the underlying conformal structures remain bounded. Furthermore, we show strong compactness of constrained Willmore surfaces under some energy threshold, proving in particular the strong compactness of minimizers of two previously studied problems.
\end{abstract}
\section{Introduction}\label{sec:Introduction}
Let $\Sigma$ be a closed, connected, and oriented genus $p$ surface and $\vec \Phi:\Sigma \to \R^3$ be a smooth immersion. We denote by $g_{\R^3}$ the standard Riemannian metric in $\R^3$ and by $g=g_{\vec \Phi} \cqq \vec \Phi^* g_{\R^3}$ the pull-back metric via $\vec \Phi$. The \emph{Gauss map} $\vec n$ of $\vec \Phi$ is defined in any positive chart as
\[\vec n  \cqq \frac{\parpar{x_1}{\vec \Phi}\times \parpar{x_2}{\vec \Phi}}{|\parpar{x_1}{\vec \Phi}\times \parpar{x_2}{\vec \Phi}|}.\]
$\dif{\mu}$ is the Riemannian measure induced by $g$. The \emph{second fundamental form} $\mb I$ of $\vec \Phi$ is defined at a point $x\in \Sigma$ as
\[\mb I_x:T_x \Sigma \times T_x \Sigma \to \R, \quad \mb I_x(v,w)\cqq -\langle d \vec n_x(v), d \vec \Phi_x(w) \rangle.\]
We define the \emph{scalar mean curvature} $H$, the \emph{mean curvature} $\vec H$, and the \emph{Gaussian curvature} $K$ by
\[H = \frac{1}{2} \tr_g(\mb I),\quad \vec H = H \vec n,\quad K = \det \hspace{-1pt}_g(\mb I).\]
 The \emph{Willmore energy} $\mc W(\vec \Phi)$ and the \emph{Dirichlet energy} $\mc E(\vec \Phi)$ of the Gauss map $\vec n$ are defined as
\[\mc W(\vec \Phi)\cqq \int_{\Sigma} |\vec H|^2 \dif{\mu},\quad \mc E (\vec \Phi) \cqq \int_{\Sigma} |d \vec n|_{g}^2 \dif{\mu}.\]
As a consequence of the Gauss-Bonnet theorem, these functionals are closely connected via
\[\mc E(\vec \Phi) = 4\mc W(\vec \Phi) - 4\pi \chi(\Sigma) = 4\mc W(\vec \Phi) - 8\pi(1-p),\]
where $\chi(\Sigma) = 2(1-p)$ is the Euler-characteristic of $\Sigma$. We define the \emph{area} $\mc A$, \emph{algebraic volume} $\mc V$, and \emph{total mean curvature} $\mc M$ of $\vec \Phi$ by 
\begin{equation}
\mc A(\vec \Phi) = \int _{\Sigma} 1 \dif{\mu}, \quad \mc V(\vec \Phi) = -\frac{1}{3} \int _{\Sigma} \langle \vec n, \vec \Phi\rangle \dif{ \mu} , \quad \mc M(\vec \Phi) = \int _{\Sigma} H \dif{ \mu}.\label{intro:area total mean curvature and area}
\end{equation}
If $\vec \Phi$ is an embedding, we choose the orientation of $\Sigma$ such that the Gauss map defined above is the \emph{inner} unit normal vector field. In this case, $\mc V$ corresponds to the volume enclosed by $\vec \Phi(\Sigma)$.

The Willmore energy and generalizations have been the subject of extensive research. Already appearing in the study of vibrations of thin plates in the early 19th century \cite{Germain, Poisson}, it nowadays plays a central role in many branches of physics and biology. One prominent example and the main motivation for this paper is the study of lipid bilayer membranes \cite{Seifert97}. Such a membrane is formed by two sheets of amphiphilic molecules, so-called lipids. They consist of a hydrophilic head and two hydrophobic tails. In an aqueous solution, these lipids self aggregate into two sheets in such a way that the hydrophobic tails are protected from the surrounding solution by the hydrophilic heads. Any border of these two sheets would again expose the hydrophobic tails and so the bilayer is naturally closed and forms a so-called \emph{vesicle}. The thickness of the resulting membrane is small compared to its global size. It is thus feasible to model them with smooth immersions $\vec \Phi: \Sigma \to \R^3$.

While the surface tension is the leading force that determines the shape of soap bubbles, the shape of a bilayer membrane is governed by its \emph{bending energy}. It was proposed by \textcite{Helfrich} that such an energy could be modelled by \cite[(12)]{Helfrich}
\[\frac{k_c}{2}\int _{\Sigma} \left (2H-c_0\right )^2\dif{\mu}  + \overline{k}_c \int _{\Sigma} K \dif{\mu}.\]
Here, $k_c$ and $\overline{k}_c$ are the curvature elastic moduli and $c_0$ is the \emph{spontaneous curvature}. Because of the Gauss-Bonnet theorem, the integral of $K$ is a topological constant and as we will be working with a fixed genus, this term does not play a role in variational problems. We thus define the \emph{Helfrich energy} $\mc H_{c_0}$
 \begin{equation}
 \mc H_{c_0}(\vec \Phi) \cqq \int _{\Sigma} \left (H-\frac{c_0}{2}\right )^2\dif{\mu} = \mc W(\vec \Phi) - c_0 \mc M(\vec \Phi) + \frac{c_0^2}{4}\mc A(\vec \Phi).\label{Helfrich energy}
 \end{equation}
The parameter $c_0$ is introduced in order to model the potential area deficit between the two layers and the chemical imbalance of the two regions separated by the membrane.

We may assume that the two layers of our vesicle are incompressible (but still bendable) and view the total surface area $\mc A$ as constant. Furthermore, we may also constrain the enclosed volume $\mc V$. The reason is that while the membrane is permeable to water, it is not permeable by other molecules, e.g., sugar or larger ions. Consequently, an exchange of water between the interior and exterior of the vesicle would change the osmotic pressure, leading to effects that are orders of magnitude larger than the bending energy \cite[Section 1.3]{Seifert97}.

To implement the additional constraints on area and enclosed volume, it is natural to consider the energy functional
\begin{equation}
\mc F(\vec \Phi) \cqq \mc W(\vec \Phi) + \alpha \mc A(\vec \Phi)+ \beta \mc V(\vec \Phi) + \gamma \mc M(\vec \Phi)\label{functional F definition}
\end{equation}
for $\alpha$, $\beta$, $\gamma \in \R$ fixed. By the Lagrange multiplier principle, minimizers of the Helfrich energy \eqref{Helfrich energy} subject to the given constraints are stationary points of $\mc F$. Denoting by $\delta$ the first variation, this means that we consider immersions satisfying
 \begin{equation}
 \delta\mc W(\vec \Phi) + \alpha \delta \mc A(\vec \Phi) + \beta \delta \mc V(\vec \Phi) + \gamma \delta \mc M(\vec \Phi)=0.\label{intro:EL equation}
 \end{equation}
We call an immersion satisfying \eqref{intro:EL equation} a \emph{constrained Willmore surface} or \emph{constrained Willmore immersion}. 
%
%One interesting consequence for constrained Willmore immersions comes from testing \eqref{intro:EL equation} with dilations. To be precise, letting $\vec \Phi_r = r\vec \Phi$ yields
%\[0 = \frac{\dd}{\dd r}\bigg\vert _{r=1}(\mc F(\Phi_r)).\]
%Using the scaling behavior $\mc W(\vec \Phi_r) = \mc W(\vec \Phi)$, $\mc A(\vec \Phi_r) = r^2 \mc A(\vec \Phi)$, $\mc M(\vec \Phi_r) = r \mc M(\vec \Phi)$ and $\mc V(\vec \Phi_r) = r^3 \mc V(\vec \Phi)$, we obtain
%\begin{equation}
%0=2 \alpha \mc A(\vec \Phi)+\gamma \mc M(\vec \Phi) +3\beta \mc V(\vec \Phi).\label{balancing condition on closed surfaces}
%\end{equation}

A large class of constrained Willmore immersions is given by minimizers of the following problems. We define the \emph{isoperimetric ratio} $\mc I(\vec \Phi)$ and the \emph{normalized curvature} $\mc T(\vec \Phi)$ as
\[\mc I(\vec \Phi ) \cqq \frac{\mc A(\vec \Phi)}{\mc V(\vec \Phi)^{2/3}},\quad \mc T(\vec \Phi) \cqq \frac{\mc M(\vec \Phi)}{\sqrt{\mc A(\vec \Phi)}}.\]
Definitions of $\mc I$ vary in literature. Notice that due to the isoperimetric inequality, $\mc I(\vec \Phi)\geq \sqrt[3]{36\pi}$. Fix a reference genus $p$ surface $\Sigma$ and consider the following minimization problems
\begin{equation}
\beta^{\mc I}_p(\sigma) \cqq \inf \{\mc W(\vec \Phi),\,  \vec \Phi:\Sigma \to \R^3\text{ is a smooth embedding and } \mc I(\vec \Phi) = \sigma\}\label{isoperimetric problem}
\end{equation}
and
\begin{equation}
\beta^{\mc T}_p(\tau) \cqq \inf \{\mc W(\vec \Phi),\,  \vec \Phi:\Sigma \to \R^3\text{ is a smooth embedding and }\mc T(\vec \Phi) = \tau\}.\label{total mean curvature problem}
\end{equation}

The global minimum $\bm{\beta}_p$ of the Willmore energy is defined as
\[\bm{\beta}_p \cqq \inf \{\mc W(\vec \Phi),\, \vec \Phi:\Sigma \to \R^3\text{ is a smooth embedding}\}. \]
$\bm{\beta}_p$ admits minimizers for all $p$ as proved in \cite{BauerKuwert}, building on the previous work \cite{Simon}. Existence of minimizers for \eqref{isoperimetric problem} for all $\sigma \in (\sqrt[3]{36\pi}, \infty)$ and arbitrary genus was shown to exist for genus $p=0$ in \cite{Schygulla}, whereas for arbitrary genus, the existence of minimizers was proved in \cite{KMR} under the condition that $\beta_p^{\mc I}(\sigma) < \min\{\bm{\beta}_p + \beta_0^{\mc I}(\sigma)-4\pi, 8\pi\}$. The bound $\beta_p^{\mc I}(\sigma) <\bm{\beta}_p + \beta_0^{\mc I}(\sigma)-4\pi$ was proved in \cite{MondinoScharrerInequality}, whereas the $8\pi$ bound was proved in \cite{ScharrerDelaunay} for $p=1$ and in \cite{KusnerMcGrath} for $p>1$. For \eqref{total mean curvature problem}, existence of minimizers for $\tau \in (0, \sqrt{8\pi})\setminus \{\sqrt{4\pi}\}$ and arbitrary genus was established by the authors in \cite{MasterThesis} using similar techniques. A quick calculation shows that minimizers $\vec \Phi$ of \eqref{isoperimetric problem} satisfy \eqref{intro:EL equation} with 
\begin{equation}
\alpha = \lambda \mc V(\vec \Phi)^{-2/3},\quad\beta = -\frac{2 \lambda}{3} \mc I(\vec \Phi) \mc V(\vec \Phi)^{-1},\quad  \gamma=0\label{explicit lagrange multipliers I}
\end{equation}
 for some $\lambda \in \R$, whereas minimizers of \eqref{total mean curvature problem} satisfy \eqref{intro:EL equation} with
\begin{equation}
\alpha = -\frac{\lambda}{2} \mc T(\vec \Phi) \mc A(\vec \Phi)^{-1} ,\quad\beta = 0,\quad\gamma = \lambda \mc A(\vec \Phi)^{-1/2}.\label{explicit lagrange multipliers T}
\end{equation}

Vesicles are observed to undergo various shape transformations when certain parameters like temperature or osmotic pressure are changed. One notable example is the so-called \emph{budding transition} described in \cite{KaesSackmann}. In this process, an increase in temperature causes the vesicle to adopt a more prolate shape. It then becomes pear-shaped in such a way that the surface forms two spherical bubbles connected by a small catenoidal neck. A further increase in temperature eventually closes the catenoidal bridge and splits the surface into two closed vesicles, touching at a point. 

In the scenario above, three limiting surfaces form. Two macroscopic vesicles which touch at a point, and a microscopic vesicle which converges to a catenoid after suitable rescaling. We call these regions of energy concentration, microscopic or macroscopic, a \emph{bubble}. A natural question is whether there is any energy lost in the limit, which was needed to connect these bubbles to one another. We will answer this question in \Cref{thm:Energy quantization} as the main result of this paper.

Mathematically, we want to study limits of sequences $\vec \Phi_k:\Sigma \to \R^3$ of constrained Willmore immersions from a genus $p$ surface $\Sigma$ into $\R^3$ with uniformly bounded Willmore energy. To understand why a bubbling phenomenon as described above occurs, the following $\eps$-regularity result needs to be established.

\begin{thm} \label{thm:eps regularity}
There exists $\eps_0>0$ such that for any conformal, constrained Willmore immersion $\vec \Phi:B_1\to \R^3$ with coefficients $\alpha$, $\beta$, $\gamma$ in the sense of \eqref{intro:EL equation} such that
\begin{equation}
\int _{B_1} |\nabla \vec n _{\vec \Phi} |^2 \dif{x} < \eps_0,\label{small dirichlet energy}
\end{equation}
it holds for any $r\in (0,1)$ 
\begin{align}
\|\nabla ^l \vec n\|_{L^\infty(B_{r})} &\leq C(r, l, \|\nabla \lambda \|_{L^1(B_1)}, e^{2\overline{\lambda}}\alpha,e^{\overline{\lambda}}\gamma) \bigg (\left ( \int_{B_1} |\nabla \vec n |^2 \dif{x} \right )^{1/2}+ e^{3\overline{\lambda}}|\beta| \bigg ) \label{eps regularity 1}\\
\|e^{-\lambda} \nabla ^ l \vec \Phi \|_{L^\infty(B_{r})} &\leq C(r, l, \|\nabla \lambda \|_{L^1(B_1)}, e^{2\overline{\lambda}}\alpha,e^{3\overline{\lambda}}\beta, e^{\overline{\lambda}}\gamma),\label{eps regularity 2}
\end{align}
for all integers $l\geq 1$. Here, $\lambda$ is the conformal factor of $\vec \Phi$ and $\overline{\lambda} = \frac{1}{|B_{1}|} \int _{B_{1}} \lambda \dif{x}$ its average. Furthermore, there exists $\eps_1 =\eps_1(\|\nabla \lambda\|_{L^1(B_1)})> 0$ depending only on $\|\nabla \lambda \|_{L^1(B_1)}$ such that if
\begin{equation}
\int _{B_1} |\nabla \vec n _{\vec \Phi}|^2 \dif{x} < \eps_1(\|\nabla \lambda\|_{L^1(B_1)}),\label{Dirichlet energy smaller than eps depending on lambda}
\end{equation}
then
\begin{equation}
e^{3\overline{\lambda}}|\beta| \leq C(\|\nabla \lambda\|_{L^1(B_1)}, e^{2\overline{\lambda}}\alpha,e^{\overline{\lambda}}\gamma) \left (\int_{B_1} |\nabla \vec n|^2 \dif{x}\right )^{1/2},\label{c3 estimate}
\end{equation}
and thus
\begin{equation}
\|\nabla ^l \vec n\|_{L^\infty(B_{r})} \leq C(r, l, \|\nabla \lambda \|_{L^1(B_1)}, e^{2\overline{\lambda}}\alpha, e^{\overline{\lambda}}\gamma) \left ( \int_{B_1} |\nabla \vec n |^2 \dif{x} \right )^{1/2}. \label{eps regularity 3}
\end{equation}
\end{thm}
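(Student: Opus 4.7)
The strategy is to rewrite \eqref{intro:EL equation} as a Rivi\`ere-type divergence system in the conformal coordinate and then apply the $\eps$-regularity machinery of Rivi\`ere and Bernard--Rivi\`ere, carefully tracking how each of the three constraint terms enters. I would begin by computing the first variations against a compactly supported test field $\vec w$:
\begin{equation*}
\delta\mc A[\vec w] = -2\int \langle \vec H,\vec w\rangle \dif \mu,\quad \delta\mc V[\vec w] = \int \langle \vec n,\vec w\rangle \dif \mu,\quad \delta\mc M[\vec w] = -\int K\langle \vec n,\vec w\rangle \dif \mu.
\end{equation*}
In conformal coordinates on $B_1$, using $\vec H = \tfrac{1}{2}e^{-2\lambda}\Delta\vec\Phi$ and $\dif\mu = e^{2\lambda}\dif x$, the Euler--Lagrange equation then reads schematically
\begin{equation}\label{plan:EL}
\Div(\vec V_{\vec\Phi}) = e^{2\lambda}\bigl(2\alpha\vec H + \beta\vec n - \gamma K\vec n\bigr),
\end{equation}
where $\vec V_{\vec\Phi}$ is the Bernard--Rivi\`ere conservation-law flux for the Willmore operator. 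The weights $e^{2\overline\lambda}\alpha$, $e^{3\overline\lambda}\beta$, $e^{\overline\lambda}\gamma$ that appear in the estimates are precisely the scale-invariant combinations attached to $\mc A$, $\mc V$, $\mc M$ under homothety, so their appearance is essentially forced by dimensional analysis.

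Next, I would apply H\'elein's moving-frame construction: under \eqref{small dirichlet energy} one obtains a Coulomb frame in which $\nabla\vec n$ has small $L^2$-norm and the nonlinearities on the left of \eqref{plan:EL} acquire Jacobian structure that is controlled by Wente estimates via Hardy--$\BMO$ duality. The term $2\alpha e^{2\lambda}\vec H$ enters linearly in $\vec H$ and is absorbable on the left under smallness, while $-\gamma K e^{2\lambda}\vec n$ carries a hidden $|\nabla\vec n|^2$ factor through $K$ and is therefore lower order. The inhomogeneity $\beta e^{2\lambda}\vec n$ is the only piece not small a priori, so it must be carried as a forcing term through the standard elliptic bootstrap; this yields \eqref{eps regularity 1}. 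The bound \eqref{eps regularity 2} on $\vec\Phi$ then follows by integrating $\partial_i\vec\Phi = e^\lambda\vec e_i$ and by using the Liouville equation $-\Delta\lambda = K e^{2\lambda}$, with the $\|\nabla\lambda\|_{L^1}$ dependence entering through Jensen's inequality, which is what converts pointwise $e^\lambda$-bounds into $e^{\overline\lambda}$-bounds.

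For the improved inequality \eqref{c3 estimate}, the idea is to isolate $\beta$ by testing \eqref{plan:EL} against a fixed unit vector. After composing with a rotation we may assume $\vec n$ is $C^0$-close to $\vec e_3$ on $B_{1/2}$ with error bounded by a multiple of $\|\nabla\vec n\|_{L^2}$. Pairing \eqref{plan:EL} with $\vec e_3$ against a cutoff $\eta\in C_c^\infty(B_1)$ produces on the right-hand side a term comparable to $e^{2\overline\lambda}\beta$ with implicit constant depending only on $\|\nabla\lambda\|_{L^1}$, while the divergence on the left and the $\alpha,\gamma$ contributions are each controlled by $e^{\overline\lambda}\|\nabla\vec n\|_{L^2}$ times a constant depending on $e^{2\overline\lambda}\alpha$ and $e^{\overline\lambda}\gamma$. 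Choosing $\eps_1$ small enough that the $\vec H$-terms do not overwhelm the $\beta$-integral, one solves for $\beta$ to obtain \eqref{c3 estimate}; substituting back into \eqref{eps regularity 1} removes the stray $\beta$-term and produces \eqref{eps regularity 3}.

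The main obstacle, I expect, is the bookkeeping of constants throughout the Wente/Hardy arguments: one must keep the dependence on $e^{2\overline\lambda}\alpha$, $e^{\overline\lambda}\gamma$, and $e^{3\overline\lambda}\beta$ explicit and polynomial while using $\|\nabla\lambda\|_{L^1}$ only as the Jensen-type bridge between pointwise and averaged conformal factors. A secondary delicate point is the $\delta\mc M$ contribution: globally $\int K\,\dif\mu$ is topological, but locally on $B_1$ one must rewrite $K e^{2\lambda}\dif x$ as a divergence using the Liouville equation so that the $\gamma$-term slots into the Rivi\`ere conservation framework rather than appearing as genuine top-order forcing.
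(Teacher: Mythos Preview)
Your plan is correct and follows essentially the same route as the paper. A few technical points where the paper's execution differs from your sketch: rather than a Coulomb moving frame, the paper works directly with Rivi\`ere's self-adjoint operator $\mathscr L_{\vec n}$, applying it to $\omega\vec H$ for a cutoff $\omega$ and decomposing the right-hand side as $\vec g_1+\vec g_2+\vec g_3+\vec g_4$, where $\vec g_3$ collects the $\alpha,\gamma$ contributions (already in divergence form via Bernard's Noether identities, so your worry about rewriting $Ke^{2\lambda}$ through the Liouville equation is unnecessary) and $\vec g_4=-2\beta e^{2\lambda}\omega\vec n$ is the genuine forcing. The $\beta$-extraction is exactly your idea: the paper tests against $\tilde\omega\,\overline{\vec n}$ (the spatial average of $\vec n$ over a small ball, playing the role of your $\vec e_3$) and uses the self-adjointness of $\mathscr L_{\vec n}$ to throw the operator onto the test function, so that only $\|\vec H\|_{L^2}\lesssim\|\nabla\vec n\|_{L^2}$ is needed on the Willmore side; the lower bound on $\int e^{2\lambda}\omega\tilde\omega\langle\vec n,\overline{\vec n}\rangle$ is precisely where the $\|\nabla\lambda\|_{L^1}$-dependence of $\eps_1$ enters, via the $L^\infty$ oscillation control on $\lambda$. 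The passage to \eqref{eps regularity 1}--\eqref{eps regularity 2} then proceeds through Rivi\`ere's invertibility lemmas for $\mathscr L_{\vec n}$ (yielding $\nabla\vec H\in L^{2,\infty}$), a Hodge decomposition plus Wente to reach $\nabla\vec H\in L^{2,1}$, hence $\vec H\in L^\infty$, and finally Calder\'on--Zygmund on $\Delta\nabla\vec\Phi=\nabla(e^{2\lambda}\vec H)$.
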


An $\eps$-regularity result for unconstrained (i.e., $\alpha=\beta=\gamma=0$) Willmore surfaces was already proved in the extrinsic setting by \textcite{WillmoreFlowSmallInitialEnergy}. It was discovered by \textcite{RiviereAnalysisAspects} that in conformal coordinates, the gradient of the Willmore energy can be put into divergence form. This weak formulation gave rise to an intrinsic $\eps$-regularity for unconstrained Willmore immersions \cite[Theorem I.5]{RiviereAnalysisAspects}, using compensated integration techniques and the special form of the Willmore gradient. \textcite{BernardNoether} showed that this divergence form was really a consequence of Noether's theorem, making use of the invariance of the Willmore energy under dilations, isometries and inversions. He further showed that these divergence formulations also exist for our constraints, see \eqref{First variation of Area}, \eqref{First variation of Volume}, and \eqref{First variation of Total Mean Curvature}. 

A related $\eps$-regularity result for a large class of constrained Willmore surfaces was established by \textcite[Theorem 1.1]{BernardWheelerWheeler}. The reason we cannot apply their theorem is that they obtain an estimate for which the right-hand side of \eqref{eps regularity 1} takes the form $C(\|\nabla \vec n\|_{L^2(B_1)}+1)$. However, in order for us to prove small $L^{2,\infty}$-estimates for $\nabla \vec n$ in neck regions later on, we need that the right-hand side depends linearly on $\|\nabla \vec n\|_{L^2(B_1)}$, see \eqref{eps regularity 3} and \eqref{pointwise nabla n bound}. This is achieved at the cost of letting the threshold $\eps$ depend on the gradient of the conformal factor.

Such an $\eps$-regularity leads to a concentration of energy phenomenon, initially observed by \textcite{SacksUhlenbeck} in the context of harmonic maps. We can reparametrize the immersions such that they are conformal with respect to a constant scalar curvature metric of unit volume. If we assume that the induced conformal structures remain compact in the moduli space, we obtain local $L^\infty$-bounds of the conformal factor up to rescaling, away from finitely many points. Then, the immersions smoothly converge, again away from finitely many points, where bubbles may develop. These bubbles are parametrized on a smaller scale than the remaining surface. The regions between the different scales are called \emph{neck regions}. Due to the $\eps$-regularity, the bubble regions converge smoothly and no energy is lost there. Consequently, the question whether any energy is lost in the limit amounts to answering whether the neck regions carry any non-zero energy in the limit. This question was answered by \textcite{BernardRiviere} in the case of unconstrained Willmore immersions with bounded Willmore energy and a bounded conformal structure, showing that no energy is lost. They proved $L^{2,\infty}$- and $L^{2,1}$-bounds for the mean curvature in neck regions, which together with the $L^{2,\infty}$ and $L^{2,1}$ duality yields estimates for the Willmore energy in neck regions. This approach was already used by \textcite{LinRiviere} in the context of harmonic maps. In the case of diverging conformal structures, \textcite{LaurainRiviereEnergyQuantization} give an explicit description for the amount of energy lost in neck regions, depending on a certain residue. 

With this, we are now ready to state the main theorem.
%
%\textcolor{blue}{There are finitely many points $a_1,\ldots, a_n \in \Sigma$ and integers $Q^1,\ldots, Q^n$ such that $T^i = \{1,\ldots, Q^i\}$ has the structure of a rooted tree with root $r^i \in T^i$ and points $x^{i,j}_k \in \Sigma$ converging to $a^i$ and radii $\rho^{i,j}_k > 0$ converging to $0$ for $i\in \{1,\ldots, n\}$, $j\in \{1,\ldots, Q^i\}$ as in \Cref{prop:Bubble neck decomposition}. Denote by $K^{i,j}$ the set of direct descendants of $j$ in $T^i$. We assume that 
%\[\left |\int _{B_{\sqrt{\rho^{i,r^i}_k}}(x^{i,r^i}_k)} -2\alpha_k + \beta_k \langle \vec n_k, \vec \xi_k\rangle - \gamma_k H_k \dif{\mu_{\vec \xi_k}}\right | = o\left (\left (\log(1/\rho^{i,r^i}_k)\right )^{-1/2}\right )\]
%and for all $i,j$ and $j'\in K^{i,j}$
%\[\left |\int _{B_{\sqrt{\rho^{i,j}_k \rho ^{i,j'}_k}}(x^{i,j'}_k)} -2\alpha_k + \beta_k \langle \vec n_k, \vec \xi_k\rangle - \gamma_k H_k \dif{\mu_{\vec \xi_k}}\right | = o\left (\left (\log(\rho^{i,j}_k/\rho^{i,j'}_k)\right )^{-1/2}\right )\]
%as $k\to \infty$.}
%	
%	
\begin{thm}[Energy quantization for constrained Willmore surfaces] \label{thm:Energy quantization}
Let $\Sigma$ be a genus $p$ surface and $\vec \Phi_k$ be a family of constrained Willmore immersions with coefficients $\alpha_k,\beta_k,\gamma_k$ as in \eqref{intro:EL equation} satisfying
\begin{equation}
\limsup_{k\to \infty} \Big[\mc W(\vec \Phi_k) + \mc A(\vec \Phi_k) +|\alpha_k| + |\beta_k| + |\gamma_k|\Big]< \infty.\label{Bounded willmore, area and coefficients}
\end{equation}
We assume that the conformal classes of $g_{\vec \Phi_k}$ remain in a compact subset of the moduli space of $\Sigma$. Then, up to a subsequence, there are immersions $\vec \zeta_\infty:\Sigma \to \R^3$ and for $i=1,\ldots, n$ and $j=1, \ldots, Q^i$ immersions $\vec \zeta^{i,j}:\mb S^2\to \R^3$ which are constrained Willmore, possibly singular, branched and containing ends, such that the energy quantization property
\begin{equation}
\lim _{k \to \infty} \mc W(\vec \Phi_k) = \mc W(\vec \zeta_\infty) + \sum _{i=1}^n \sum _{j=1}^{Q^i} \mc W(\vec \zeta^{i,j})\label{energy quantization result}
\end{equation}
holds. If $\vec \zeta_\infty$ or one of the $\vec \zeta^{i,j}$ contain ends, then it is an unconstrained Willmore immersion and may be inverted to a possibly singular, branched unconstrained Willmore surface without ends. 
\end{thm}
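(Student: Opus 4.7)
My plan follows the Bernard-Rivière blueprint for energy quantization, with the $\varepsilon$-regularity of Theorem~\ref{thm:eps regularity} as the essential new input that makes it work in the constrained setting. Since the conformal classes remain compact in moduli space, after composition with a suitable sequence of diffeomorphisms I may assume each $\vec{\Phi}_k$ is conformal with respect to a sequence of constant-scalar-curvature unit-volume metrics $h_k$ on $\Sigma$ converging smoothly to $h_\infty$. Writing $g_{\vec{\Phi}_k} = e^{2\lambda_k}h_k$ and using the uniform bound on $\mc E(\vec{\Phi}_k)$, I identify finitely many concentration points $a^1,\dots,a^n \in \Sigma$ as those at which, for every $\rho>0$, $\liminf_k \int_{B_\rho(a^i)}|d\vec{n}_k|_{h_k}^2 \, d\mathrm{vol}_{h_k}$ exceeds the threshold $\varepsilon_1$ of Theorem~\ref{thm:eps regularity}. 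On the complement of any neighbourhood of $\{a^i\}$, the strong bound \eqref{eps regularity 3} provides uniform $C^l$-estimates on $\vec{n}_k$ and on $e^{-\lambda_k}\nabla^l\vec{\Phi}_k$; after recentering by appropriate translations and dilations, a subsequence converges smoothly on $\Sigma \setminus \{a^i\}$ to a conformal immersion $\vec{\zeta}_\infty$, which is constrained Willmore by passing to the limit in \eqref{intro:EL equation}, or unconstrained with ends when the global rescaling forces the coefficients to vanish in the limit.

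Around each $a^i$ I iteratively perform the standard bubble extraction: select scales $r_k^{i,j}\to 0$, centers $c_k^{i,j}$ and a dilation factor $\rho_k^{i,j}$ so that a fixed $O(1)$ portion of the local Dirichlet energy concentrates in the unit disk for the rescaled map $\tilde{\vec{\Phi}}_k^{i,j}(x) = (\vec{\Phi}_k(a^i + r_k^{i,j}x) - c_k^{i,j})/\rho_k^{i,j}$. Applying the first step inductively to each rescaled sequence produces a bubble $\vec{\zeta}^{i,j}:\R^2 \to \R^3$, which extends to $\mathbb{S}^2$ with possible branch points and ends by the point-removability of (constrained) Willmore singularities. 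Under rescaling, the coefficients of the Euler-Lagrange equation become $(\rho_k^{i,j})^2\alpha_k$, $(\rho_k^{i,j})^3\beta_k$, $\rho_k^{i,j}\gamma_k$; after a further subsequence I may assume each converges in $[0,\infty]$, and the bubble is constrained Willmore precisely when all three limits are finite. The iteration terminates after finitely many steps because each bubble carries at least $\min\{\varepsilon_0,\varepsilon_1\}$ worth of Dirichlet energy.

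The main work and expected main obstacle is the neck analysis. After the previous step the image decomposes into a macroscopic region (where convergence is smooth), finitely many bubble regions (also smooth), and dyadic annular neck regions $N_k = B_{Rr_k^{i,j-1}}(a^i) \setminus B_{r_k^{i,j}/R}(a^i)$, and I must show $\lim_{R\to\infty}\limsup_k\int_{N_k}|\vec{H}_k|^2\,d\mu_k = 0$. I plan to subdivide each $N_k$ into dyadic sub-annuli on each of which the Dirichlet energy lies below $\varepsilon_1$, so that the \emph{linear} bound \eqref{eps regularity 3} applies; summing the dyadic pointwise estimates converts this into an $L^{2,\infty}$-control of $\nabla\vec{n}_k$ on $N_k$ whose norm is bounded by the square root of the neck Dirichlet energy itself — rather than by a constant plus it, as in the Bernard-Wheeler-Wheeler estimate. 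Using the Noether-type conservation laws of Bernard for the divergence form of \eqref{intro:EL equation} applied to the three constraint integrands, I then derive the dual $L^{2,1}$-estimate for $\vec{H}_k$ on the neck, and conclude by $L^{2,\infty}$--$L^{2,1}$ duality, exactly as in Lin-Rivière and Bernard-Rivière, that the Willmore energy carried by $N_k$ tends to zero. The bound \eqref{c3 estimate} on $e^{3\bar{\lambda}_k}|\beta_k|$ by the Dirichlet energy is essential at this step: without it the volume term, whose natural rescaling is $r_k^3$, would generate a nontrivial residue in the neck, and the quantization would instead take the form of Laurain-Rivière with an explicit loss.

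Adding $\mc W(\vec{\zeta}_\infty)$, the bubble contributions $\sum_{i,j}\mc W(\vec{\zeta}^{i,j})$, and the negligible neck energies delivers \eqref{energy quantization result}. The final assertion that any limit containing ends must be unconstrained then follows because an end corresponds, by construction of the rescaling, to a regime in which all three rescaled Lagrange multipliers vanish in the limit.
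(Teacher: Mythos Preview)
Your overall architecture is correct and matches the paper: reparametrize conformally with respect to converging constant-curvature metrics, identify finitely many concentration points, use the $\eps$-regularity of Theorem~\ref{thm:eps regularity} for smooth convergence on the complement, perform a bubble-neck decomposition, and reduce the energy identity to showing that neck regions carry no Willmore energy via $L^{2,1}$--$L^{2,\infty}$ duality. The treatment of ends and of the rescaled Lagrange multipliers is also in line with the paper.

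There is, however, a genuine gap in your neck analysis. Your mechanism for the $L^{2,\infty}$ smallness is that the dyadic pointwise bounds from \eqref{eps regularity 3} sum to an $L^{2,\infty}$ norm controlled by the square root of the \emph{total} neck Dirichlet energy. But the quantity you ultimately want to prove vanishes is precisely that neck energy; pairing a uniform $L^{2,1}$ bound on $e^{\lambda_k}\vec H_k$ against $\|\nabla\vec n_k\|_{L^{2,\infty}}\le C(\int_{N_k}|\nabla\vec n_k|^2)^{1/2}$ only yields $\int_{N_k}e^{2\lambda_k}H_k^2\le C(\int_{N_k}|\nabla\vec n_k|^2)^{1/2}$, and since $\int_{N_k}|\nabla\vec n_k|^2=4\int_{N_k}e^{2\lambda_k}H_k^2+o(1)$ this is circular --- it gives a uniform bound, not vanishing. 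What actually comes out of the dyadic $\eps$-regularity is $|x|\,|\nabla\vec n_k(x)|\le C\sqrt{\eps_0}$, a bound by the fixed threshold, again not tending to zero with $\alpha$. The paper obtains the genuine smallness \eqref{idea: L2infty bound} by a separate blow-up argument (Lemma~\ref{lem: L2 weak estimate for gauss map annulus has small energy condition}): assuming a point $x_k$ deep in the neck with $|x_k|\,|\nabla\vec n_k(x_k)|\ge\eps'>0$, one rescales there; the $L^p$ control of $e^{\lambda_k}$ in necks (Lemma~\ref{lem:Lp lemma}) forces $e^{\lambda_k(x_k)}|x_k|\to 0$, so the rescaled constraints vanish, the conservation quantities $\tilde S_k,\tilde{\vec R}_k$ pass to the limit with globally bounded $L^{2,1}$ gradients, a Wente-type absorption on all of $\C$ forces $\nabla\tilde S_\infty=\nabla\tilde{\vec R}_\infty=0$, hence the limit is minimal, and the contradiction is closed as in \cite{BernardRiviere}. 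This step is not a repackaging of the dyadic estimate and must be supplied.

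Secondly, your description of the $L^{2,1}$ estimate understates what is needed in the constrained case. Two ingredients specific to this paper are missing from your plan: (i) the $L^p$ bound for $e^{\lambda_k}$ in neck regions (Lemma~\ref{lem:Lp lemma}), which forces the parameter $d$ governing the conformal factor away from $-1$ and, when $d<-1-\delta$, requires working with the reflected immersion $\hat{\vec\Phi}_k(z)=\vec\Phi_k(r_k/z)$; and (ii) the balancing-term estimate $|\mc B_k|\log(1/r_k)\le C$ (Lemma~\ref{lem: balancing term bound}), which is what actually controls the logarithmic coefficient in the Hodge decomposition of $Y_k$ and allows the $L^{2,1}$ bound on $\nabla Y_k$ in \eqref{strong nabla Yk estimate}. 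Without these the constraint terms in the conservative system \eqref{conservative system} are not under control on the degenerating annuli, and the Bernard--Rivi\`ere machinery does not close. Your remark about \eqref{c3 estimate} is relevant for the $\eps$-regularity but does not by itself handle these neck-specific issues.
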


\begin{thm}\label{thm:continuation of energy quantization}
Continuing \Cref{thm:Energy quantization}, these  maps are obtained in the following way: There exists a sequence of diffeomorphisms $f_k$ and constant curvature metrics $h_k$ of unit volume on $\Sigma$ such that $\vec \xi_k \cqq \vec \Phi_k \circ f_k$ are conformal with respect to $h_k$ and $h_k$ converge in $C^\infty(\Sigma)$ to a limiting metric $h_{\infty}$. There are finitely many points $a_1,\ldots, a_n\in \Sigma$ and maps $\Xi_k$ which are compositions of dilations and isometries in $\R^3$ such that
\begin{equation}
\Xi_k \circ \vec \xi_k \to \vec \zeta_\infty \quad \text{in}\quad C^l_{\loc}(\Sigma \setminus \{a_1,\ldots, a_n\})\label{limit immersion on Sigma}
\end{equation}
for all $l\in \N$. Let $\phi^i_k$ be converging conformal coordinates around $a_i$. There are $z^{i,j}_k \in \Sigma$ converging to $a_i$, radii $\rho^{i,j}_k$ converging to 0, and $\Xi^{i,j}_k$ compositions of dilations and isometries in $\R^3$ such that
\[\Xi^{i,j}_k \circ \vec \xi_k \circ \phi_k(\rho^{i,j}_k y + {\phi^{i}_k}^{-1}(z^{i,j}_k)) \to \vec \zeta^{i,j} \circ \pi^{-1}(y) \quad \text{in}\quad C^l_{\loc}(\C \setminus \{a^{i,j}_1,\ldots, a^{i,j}_{n_{i,j}}\}).\]
Here, $\pi:\mb S^2 \to \C \cup \{\infty\}$ is the stereographic projection from $\mb S^2$ to the extended complex plane and $\{a^{i,j}_1,\ldots, a^{i,j}_{n_{i,j}}\}$ are finitely many points in $\C$. Finally, it holds 
\begin{equation}
\Xi_k \circ \vec \xi_k \to \vec \zeta_\infty\quad\text{in $C^l(\Sigma)$ for all $l\in \N$} \iff \lim _{k\to \infty}\mc W(\vec \xi_k) = \mc W(\vec \zeta_\infty).\label{energy quantization: strong convergence everywhere}
\end{equation}
\end{thm}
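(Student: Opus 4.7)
The plan is to carry out a bubble--tree construction paralleling the one of \textcite{BernardRiviere} for unconstrained Willmore surfaces, with \Cref{thm:eps regularity} supplying the local compactness at every scale. First, since the conformal classes of $g_{\vec \Phi_k}$ stay in a compact subset of the moduli space of $\Sigma$, Deligne--Mumford furnishes unit--volume constant--curvature representatives $h_k$ converging smoothly to some $h_\infty$, together with diffeomorphisms $f_k$ so that $\vec \xi_k \cqq \vec \Phi_k\circ f_k$ is conformal with respect to $h_k$ and remains constrained Willmore with the same coefficients $\alpha_k,\beta_k,\gamma_k$. Define
\[\{a_1,\ldots,a_n\}\cqq \left\{a\in\Sigma:\; \liminf_{\rho\to 0}\liminf_{k\to\infty}\int_{B^{h_\infty}_\rho(a)}|d\vec n_{\vec\xi_k}|_{h_k}^2\,dv_{h_k}\geq \eps_1\right\},\]
with $\eps_1$ as in \Cref{thm:eps regularity}; this set is finite thanks to the uniform Dirichlet--energy bound that \eqref{Bounded willmore, area and coefficients} supplies via Gauss--Bonnet.

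Away from these points, I would cover each compact subdomain by small conformal disks of $h_\infty$ on which the smallness hypothesis \eqref{Dirichlet energy smaller than eps depending on lambda} eventually holds. After composing with a similarity $\Xi_k$ of $\R^3$ (dilation followed by isometry), chosen to normalize the conformal factor and the position at a fixed base point outside $\{a_i\}$, estimate \eqref{eps regularity 3} delivers uniform $C^l_{\loc}(\Sigma\setminus\{a_i\})$ bounds for $\Xi_k\circ \vec\xi_k$. A diagonal extraction produces \eqref{limit immersion on Sigma}. Depending on whether the rescaled quantities $e^{2\overline{\lambda}_k}\alpha_k$, $e^{3\overline{\lambda}_k}\beta_k$, $e^{\overline{\lambda}_k}\gamma_k$ remain bounded, $\vec \zeta_\infty$ is either constrained Willmore or, after inversion, an unconstrained Willmore surface with ends; point removability for such surfaces extends the limit across the finitely many punctures as a possibly branched immersion.

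Near each $a_i$, working in the chosen conformal coordinates $\phi^i_k$, the bubble extraction proceeds by the classical iterative rescaling. At stage $j$, I would select a centre $z^{i,j}_k\to a_i$ and a scale $\rho^{i,j}_k\to 0$ capturing the finest remaining concentration, for instance as the smallest radius for which the ball $B_{\rho^{i,j}_k}(z^{i,j}_k)$ contains $\eps_1/2$ of the Dirichlet energy not yet accounted for by previously extracted bubbles. Rescaling $\vec \xi_k\circ \phi^i_k(\rho^{i,j}_k y + {\phi^{i}_k}^{-1}(z^{i,j}_k))$ and composing with a similarity $\Xi^{i,j}_k$ that normalizes the conformal factor, one applies \Cref{thm:eps regularity} on $\C$ away from a newly emerging finite set $\{a^{i,j}_\ell\}$, extracts a $C^l_{\loc}$ limit, and transports it to $\mb S^2$ via $\pi^{-1}$. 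The constraints transform as $\alpha_k\mapsto (\rho^{i,j}_k e^{\overline{\lambda}^{i,j}_k})^2\alpha_k$, with exponents $3$ and $1$ for $\beta_k$ and $\gamma_k$ respectively; if any of these diverges the bubble is unconstrained Willmore, otherwise constrained. The procedure terminates because each extracted bubble carries at least $\eps_1/2$ of the total (uniformly bounded) Dirichlet energy.

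Finally, for the equivalence \eqref{energy quantization: strong convergence everywhere}, the implication ``$\Rightarrow$'' follows from the $C^2$ continuity of $\mc W$. For the converse, the energy identity \eqref{energy quantization result} from \Cref{thm:Energy quantization} leaves no room for either bubble energy or loss in the neck regions, so the concentration mass around each $a_i$ vanishes as $k\to\infty$. Hypothesis \eqref{Dirichlet energy smaller than eps depending on lambda} then eventually holds at every point of $\Sigma$, and \eqref{eps regularity 3} upgrades the convergence of \eqref{limit immersion on Sigma} to all of $\Sigma$. The main obstacle I anticipate is carefully tracking the normalization of the conformal factor across the rescaling steps so that the constants in \Cref{thm:eps regularity} remain controlled during the iterative bubble extraction, in particular keeping $\|\nabla\lambda\|_{L^1}$ bounded on the conformal disks at every scale; this is where the linear dependence on $\|\nabla \vec n\|_{L^2}$ built into \eqref{eps regularity 3} becomes crucial.
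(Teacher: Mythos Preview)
Your overall strategy matches the paper's: reparametrize conformally with respect to converging constant--curvature metrics, locate the finite concentration set via the $\eps$-regularity, extract the body limit after a similarity normalization, then blow up iteratively at each concentration point and pass to $\mb S^2$ via stereographic projection; the equivalence \eqref{energy quantization: strong convergence everywhere} is deduced from the energy identity \eqref{energy quantization result}. Two points deserve correction.

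First, the obstacle you flag at the end is not a genuine obstacle. The control on $\|\nabla\lambda\|_{L^1}$ (in fact $\|\nabla\lambda\|_{L^{2,\infty}}$) that \Cref{thm:eps regularity} needs is supplied \emph{globally and once}: for any weak conformal immersion of a closed surface with bounded Willmore energy one has a uniform bound $\|d\lambda_k\|_{L^{2,\infty}(\Sigma,h_k)}\leq C(\Lambda)$ (see \cite[Theorem~5.4]{RiviereLectureNotes}). Since this norm is invariant under conformal rescalings of the domain, the same bound holds on every blow-up disk at every stage of the iteration, so there is nothing to track. The paper simply quotes this fact and then applies \Cref{thm:eps regularity} on each rescaled ball.

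Second, your dichotomy for the bubble constraints is stated backwards. Because $\mc A(\vec\Phi_k)$ is bounded, the physical scale $\rho^{i,j}_k e^{\lambda(i,j,k)}$ is bounded \emph{above} (Jensen on the conformal factor), so the rescaled multipliers $(\rho^{i,j}_k e^{\lambda(i,j,k)})^2\alpha_k$, $(\rho^{i,j}_k e^{\lambda(i,j,k)})^3\beta_k$, $(\rho^{i,j}_k e^{\lambda(i,j,k)})\gamma_k$ can never diverge. The correct alternative is: either $\rho^{i,j}_k e^{\lambda(i,j,k)}$ stays bounded away from zero, in which case the bubble inherits finite rescaled area and is constrained Willmore with the limiting coefficients; or $\rho^{i,j}_k e^{\lambda(i,j,k)}\to 0$, which forces the rescaled area to be infinite (an end forms) and all three rescaled multipliers to vanish, so the bubble is \emph{unconstrained} Willmore and can then be inverted to remove the ends. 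The same applies to the body limit $\vec\zeta_\infty$.
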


This theorem allows us to conclude that the minimizers given in \eqref{isoperimetric problem} and \eqref{total mean curvature problem} are compact in the $C^l$-topology for all $l\in \N$.
\begin{thm}[Compactness of constrained Willmore surfaces]
 \label{thm:Compactness of constrained Willmore surfaces}
Let $\Sigma$ be a genus $p$ surface and $0<\delta<1$. The family of immersions $\vec \Phi:\Sigma \to \R^3$ that are constrained Willmore surfaces with coefficients $\alpha,\beta,\gamma$ satisfying
\begin{equation}
\max\{|\alpha|,|\beta|,|\gamma|\}<\frac{1}{\delta},\label{intro:Lagrange multiplier bounded corollary}
\end{equation}
\[\delta \leq \mc A(\vec \Phi)\leq 1/\delta\]
and 
\begin{equation}
\mc W(\vec \Phi)< \begin{cases}
\min\{8\pi,\bm{\beta}_p + \max\{\beta^{\mc T}_0(\mc T(\vec \Phi)), \beta_0^\iso(\iso (\vec \Phi))\} - 4\pi\}-\delta &\quad \text{if $p\geq 1$},\\
8\pi - \delta & \quad \text{if $p=0$}
\end{cases}\label{intro:W bound}
\end{equation}
is compact in $C^l(\Sigma)$ for any $l\in \N$, modulo reparametrization.

\end{thm}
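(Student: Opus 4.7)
I would take an arbitrary sequence $(\vec \Phi_k)$ from the family, verify the hypotheses of \Cref{thm:Energy quantization}, apply the decomposition of \Cref{thm:continuation of energy quantization}, and then use the strict inequality in \eqref{intro:W bound} to rule out the formation of bubbles. The equivalence \eqref{energy quantization: strong convergence everywhere} would then upgrade the subsequential weak convergence to convergence in $C^l(\Sigma)$ for every $l$, which is precisely the asserted compactness modulo reparametrization.

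\textbf{Verifying the hypotheses.} The bounds required by \eqref{Bounded willmore, area and coefficients} follow at once from \eqref{intro:Lagrange multiplier bounded corollary}, the area bound, and \eqref{intro:W bound}. It remains to keep the conformal classes of $g_{\vec \Phi_k}$ in a compact subset of moduli. For $p = 0$ this is automatic. For $p \geq 1$, I would invoke the classical fact (going back to Kuwert-Li and used in \cite{BernardRiviere,LaurainRiviereEnergyQuantization}) that sequences of immersions whose conformal classes escape every compact subset of moduli satisfy $\liminf_k \mc W(\vec \Phi_k) \geq 8\pi$; this is excluded by $\mc W(\vec \Phi_k) < 8\pi - \delta$. \Cref{thm:Energy quantization,thm:continuation of energy quantization} then produce, along a subsequence, reparametrizations $\vec \xi_k = \vec \Phi_k \circ f_k$, dilation-isometries $\Xi_k$, concentration points $a_1,\dots,a_n \in \Sigma$, a limit $\vec \zeta_\infty:\Sigma \to \R^3$, bubbles $\vec \zeta^{i,j}:\mb S^2 \to \R^3$, and the quantization identity \eqref{energy quantization result}. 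Since $\mc A, \mc V, \mc M$ scale under a domain dilation by $\rho$ as $\rho^2, \rho^3, \rho$ respectively, while $\mc W$ is scale-invariant, and $(\alpha_k,\beta_k,\gamma_k)$ is bounded with $\rho^{i,j}_k \to 0$, the Lagrange multipliers of each rescaled bubble vanish in the limit, so every $\vec \zeta^{i,j}$ is an \emph{unconstrained} branched Willmore immersion of $\mb S^2$, possibly with ends. After inverting ends where present, the Li-Yau inequality yields $\mc W(\vec \zeta^{i,j}) \geq 4\pi$, with equality only for round spheres.

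\textbf{No bubbles can form.} By \eqref{energy quantization: strong convergence everywhere}, it suffices to show $n = 0$. Assume for contradiction that some $\vec \zeta^{i,j}$ forms. If $p = 0$, then $\vec \zeta_\infty$ is itself a (possibly branched) immersion of $\mb S^2$, so $\mc W(\vec \zeta_\infty) \geq 4\pi$, and \eqref{energy quantization result} forces $\lim_k \mc W(\vec \Phi_k) \geq 8\pi$, contradicting $\mc W(\vec \Phi_k) < 8\pi - \delta$. If $p \geq 1$, then $\vec \zeta_\infty$ has genus $p$ and hence $\mc W(\vec \zeta_\infty) \geq \bm \beta_p$; what remains is to show that the bubble contribution totals at least $\max\{\beta_0^\iso(\iso(\vec \Phi_k)), \beta_0^{\mc T}(\mc T(\vec \Phi_k))\} - 4\pi$ for large $k$. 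Following \cite{MondinoScharrerInequality, KMR, MasterThesis}, I would separate the bubbles according to whether the \emph{target}-dilation factor inside $\Xi^{i,j}_k$ stays bounded (macroscopic in $\R^3$) or vanishes: bubbles with vanishing target scale carry no $\mc A, \mc V, \mc M$ in the limit and contribute only $4\pi$ each, whereas macroscopic bubbles take a definite share of the area, volume and total mean curvature, so that the infima defining $\beta_0^\iso, \beta_0^{\mc T}$ apply to them directly. Combined with a Mondino-Scharrer catenoidal gluing that joins $\vec \zeta_\infty$ to a bubble at the expense of $-4\pi$ in the Willmore energy, this yields the required lower bound and contradicts \eqref{intro:W bound}.

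\textbf{Main obstacle.} The delicate step is this last piece of bookkeeping for $p \geq 1$: distributing $\iso$ and $\mc T$ cleanly between $\vec \zeta_\infty$ and the bubbles, distinguishing macroscopic from microscopic bubbles in both domain and target scales, and running the catenoidal gluing so that the constant $-4\pi$ in \eqref{intro:W bound} appears as the sharp loss across the neck region rather than being dissipated elsewhere.
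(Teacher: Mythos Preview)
Your overall strategy---apply \Cref{thm:Energy quantization}, rule out bubbles via the energy bound, then invoke \eqref{energy quantization: strong convergence everywhere}---is the paper's strategy as well. But two steps in your argument do not go through as written.

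First, the claim that every bubble $\vec\zeta^{i,j}$ is \emph{unconstrained} is incorrect. The Lagrange multipliers rescale under \emph{target} dilations according to \eqref{rescalings of constants}, not under domain dilations; the relevant factor is $e^{\lambda(i,j,k)+\ln\rho^{i,j}_k}$, and the area bound only forces this to be bounded from above, not to tend to $-\infty$. A bubble that carries positive area at the original scale therefore remains a \emph{constrained} Willmore sphere in the limit. This does not by itself destroy the $4\pi$ lower bound (Li--Yau still gives that for any closed immersed sphere), but it invalidates your appeal to Bryant's classification later on.

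Second, and more seriously, you assert $\mc W(\vec\zeta_\infty)\geq\bm\beta_p$ for $p\geq 1$ and $\mc W(\vec\zeta_\infty)\geq 4\pi$ for $p=0$. Neither follows from \Cref{thm:Energy quantization}: the limit $\vec\zeta_\infty$ is obtained only after composing with dilations $\Xi_k$ and may have \emph{ends} and infinite area. This happens precisely when the conformal factors $\lambda_k$ degenerate to $-\infty$ on $\Sigma\setminus\{a_1,\dots,a_n\}$, and then $\vec\zeta_\infty$ is an unconstrained Willmore immersion of a punctured genus~$p$ surface to which $\bm\beta_p$ does not apply. The paper resolves this by a case split you are missing. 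In Case~1 ($\lambda_k\to-\infty$ away from the $a_i$), it does not use $\vec\zeta_\infty$ at all but instead runs the Cut-and-Fill procedure of \cite{KMR,MasterThesis} directly on the sequence $\vec\xi_k$ to produce the lower bound $\bm\beta_p+\max\{\beta_0^{\mc T}(\mc T(\vec\xi_k)),\beta_0^{\mc I}(\mc I(\vec\xi_k))\}-4\pi$. In Case~2 ($\lambda_k$ bounded away from the $a_i$), $\vec\zeta_\infty$ is genuinely closed with $\mc W(\vec\zeta_\infty)\geq 4\pi$, and the paper uses only the $8\pi$ part of \eqref{intro:W bound}: it picks an \emph{innermost} bubble, shows it is a true bubble, and---distinguishing whether it carries area (invoke \cite[Lemma 4.4]{KMR}) or not (invert, use point-removability \cite{BernardRiviereSingularityRemovability} and Bryant's classification)---obtains a total of at least $8\pi$. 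For $p=0$ the paper cites \cite{MondinoRiviere} to arrange that Case~2 always holds. Your sketch does not separate these cases, and the ``catenoidal gluing'' you propose is not how the $-4\pi$ arises here; it comes from Cut-and-Fill in Case~1, while Case~2 never touches the $\bm\beta_p$ threshold.
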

A corollary of the previous theorem is that the minimizers of \eqref{isoperimetric problem} and \eqref{total mean curvature problem} are also compact in the $C^l$-topology. \eqref{intro:W bound} is satisfied using \cite[Corollary 1.5]{MondinoScharrerInequality} and \cite[Theorem 1.2, Proposition 1.3]{MasterThesis}. \eqref{intro:Lagrange multiplier bounded corollary} is a consequence of \eqref{explicit lagrange multipliers I}, \eqref{explicit lagrange multipliers T}, \Cref{lem:boundedness of Lagrange multiplier for minimizers}, and \Cref{rem:iso lagrange multipliers remain bounded}.

\begin{cor}[See \Cref{thm:Compactness of constrained Willmore surfaces}, \Cref{lem:boundedness of Lagrange multiplier for minimizers}, and \Cref{rem:iso lagrange multipliers remain bounded}] \label{cor:Compactness minimizers}
Suppose that $p\in \N_0$, $\{\tau_k, \, k\in \N\}\subset\subset (0, \sqrt{8\pi})\setminus \{\sqrt{4\pi}\}$ and $\{\sigma_k, \, k \in \N\} \subset\subset (\sqrt[3]{36\pi},\infty)$. Denote by $\vec \Phi^{\mc I}_k$ minimizers of \eqref{isoperimetric problem} satisfying $\mc I(\vec \Phi^{\mc I}_k) = \sigma_k$ and $\mc A(\vec \Phi^{\mc I}_k) = 1$. Similarly, denote by $\vec \Phi^{\mc T}_k$ minimizers of \eqref{total mean curvature problem} satisfying $\mc T(\vec \Phi^{\mc T}_k) = \tau_k$ and $\mc A(\vec \Phi^{\mc T}_k) = 1$. Then, after a potential reparametrization, $\{\Phi^{\mc I}_k,\, k\in \N\}$ and $\{\Phi^{\mc T}_k,\, k\in \N\}$ are precompact in $C^l(\Sigma, \R^3)$ for all $l\in \N$.
\end{cor}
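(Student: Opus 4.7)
The plan is to reduce the corollary to \Cref{thm:Compactness of constrained Willmore surfaces} by verifying, for both families $\{\vec \Phi^{\mc I}_k\}$ and $\{\vec \Phi^{\mc T}_k\}$, that there exists a uniform $\delta \in (0,1)$ such that \eqref{intro:Lagrange multiplier bounded corollary}, the area bound $\delta \le \mc A(\vec \Phi_k) \le 1/\delta$, and the Willmore energy bound \eqref{intro:W bound} all hold. The area bound is trivial from the normalization $\mc A(\vec \Phi_k)=1$, so the only work is to bound the Lagrange multipliers and to gain a uniform gap in \eqref{intro:W bound}.

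For the Lagrange multipliers, I would use the explicit formulas \eqref{explicit lagrange multipliers I} and \eqref{explicit lagrange multipliers T}. Under the area normalization $\mc A=1$, the isoperimetric constraint $\mc I(\vec \Phi^{\mc I}_k)=\sigma_k$ forces $\mc V(\vec \Phi^{\mc I}_k)=\sigma_k^{-3/2}$, and since $\{\sigma_k\} \subset\subset (\sqrt[3]{36\pi},\infty)$, the quantities $\mc V(\vec \Phi^{\mc I}_k)^{-2/3}$, $\mc I(\vec \Phi^{\mc I}_k)\mc V(\vec \Phi^{\mc I}_k)^{-1}$ and $\mc A(\vec \Phi^{\mc T}_k)^{-1/2}$ are uniformly bounded. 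It therefore remains to control the scalar Lagrange multiplier $\lambda$ appearing in \eqref{explicit lagrange multipliers I} and \eqref{explicit lagrange multipliers T}: this is supplied by \Cref{lem:boundedness of Lagrange multiplier for minimizers} for the $\mc T$-problem and by \Cref{rem:iso lagrange multipliers remain bounded} for the $\mc I$-problem. Combining these inputs yields a uniform constant $M$ with $\max\{|\alpha_k|,|\beta_k|,|\gamma_k|\} \le M$ for both families, hence \eqref{intro:Lagrange multiplier bounded corollary} for any $\delta < 1/M$.

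For the energy bound, I would invoke the strict inequalities already recorded after \Cref{thm:Compactness of constrained Willmore surfaces}: \cite[Corollary 1.5]{MondinoScharrerInequality} gives $\beta^{\mc I}_p(\sigma)<\bm{\beta}_p+\beta_0^{\mc I}(\sigma)-4\pi$, while $\beta_p^{\mc I}(\sigma)<8\pi$ holds by \cite{ScharrerDelaunay, KusnerMcGrath}; analogously \cite[Theorem 1.2, Proposition 1.3]{MasterThesis} cover the $\mc T$-problem. Since $\vec \Phi^{\mc I}_k$ and $\vec \Phi^{\mc T}_k$ are minimizers, $\mc W(\vec \Phi^{\mc I}_k)=\beta_p^{\mc I}(\sigma_k)$ and $\mc W(\vec \Phi^{\mc T}_k)=\beta_p^{\mc T}(\tau_k)$. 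The maps $\sigma \mapsto \beta^{\mc I}_p(\sigma),\,\beta_0^{\mc I}(\sigma)$ and $\tau \mapsto \beta^{\mc T}_p(\tau),\,\beta_0^{\mc T}(\tau)$ are continuous on the compact parameter sets $\{\sigma_k\}\cup\overline{\{\sigma_k\}}$, $\{\tau_k\}\cup\overline{\{\tau_k\}}$ (continuity of these profile functions is standard from the construction of comparison surfaces in the cited references), so the strict inequalities upgrade to a uniform positive gap; choosing $\delta$ smaller than half this gap secures \eqref{intro:W bound}.

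With all three hypotheses verified for a common $\delta$, \Cref{thm:Compactness of constrained Willmore surfaces} applies to each family and yields the desired $C^l$-precompactness modulo reparametrization. The only point requiring care is the continuity of the profile functions $\beta^{\mc I}_p,\beta^{\mc T}_p,\beta_0^{\mc I},\beta_0^{\mc T}$ on the specified open intervals; this is the main conceptual obstacle, but it follows from a standard upper semicontinuity argument using the explicit comparison surfaces produced in \cite{KMR, MasterThesis, MondinoScharrerInequality} together with lower semicontinuity of $\mc W$ under the convergence provided by \Cref{thm:Energy quantization}. The remaining steps are bookkeeping with the explicit formulas \eqref{explicit lagrange multipliers I}, \eqref{explicit lagrange multipliers T} and the two auxiliary results on boundedness of $\lambda$.
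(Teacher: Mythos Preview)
Your proposal is correct and follows exactly the approach the paper indicates: the paper's own proof is the short paragraph preceding the corollary, which says that \eqref{intro:W bound} follows from \cite[Corollary~1.5]{MondinoScharrerInequality} and \cite[Theorem~1.2, Proposition~1.3]{MasterThesis}, and that \eqref{intro:Lagrange multiplier bounded corollary} follows from \eqref{explicit lagrange multipliers I}, \eqref{explicit lagrange multipliers T}, \Cref{lem:boundedness of Lagrange multiplier for minimizers}, and \Cref{rem:iso lagrange multipliers remain bounded}. You have simply spelled out how these pieces combine, including the passage from pointwise strict inequalities to a uniform $\delta$ via continuity of the profile curves on the compact parameter sets; the paper treats this last point as understood (and indeed records the relevant continuity of $\beta_0^{\mc I}$, $\beta_0^{\mc T}$ in the proof of \Cref{thm:Compactness of constrained Willmore surfaces} and the Lipschitz continuity of $\beta_p^{\mc T}$ in \Cref{cor:Lipschitz continuity of graph}).
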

%Finally, we will be interested in studying the asymptotic limits of minimizers of \eqref{isoperimetric problem} as $\sigma \to 0$ and of \eqref{total mean curvature problem} as $\tau \to \sqrt{8\pi}$. 

%\begin{thm}[See \Cref{thm:Asymptotic convergence}]
%Suppose $p\in \N_0$. There exists a sequence $\tau_n \nearrow \sqrt{8\pi}$ such that the minimizers $\vec \Phi^{\mc T}_n \in \mc S_p$ of \eqref{total mean curvature problem} satisfying
%\[\mc A(\vec \Phi^{\mc T}_k) = 1 \quad\text{and}\quad\beta_p^{\mc T}(\tau_n) = \mc W(\Phi^{\mc T}_k) \]
%converge smoothly to two touching spheres connected by $p+1$ catenoids. Furthermore, let $\sigma_n\searrow 0$ and $\vec \Phi^{\mc I}_n$ be the minimizers of \eqref{isoperimetric problem} satisfying
%\[\mc A(\vec \Phi^{\mc I}_k) = 1 \quad\text{and}\quad\beta_p^{\mc I}(\sigma_n) = \mc W(\Phi^{\mc I}_k). \]
%Then they converge to two spheres connected by $p+1$ catenoids. \textcolor{blue}{Again, we need to be more precise.}
%\end{thm}

The structure of this paper is as follows. In \Cref{sec:Notations}, we introduce necessary notation, specify the notion of constrained Willmore surfaces and briefly recall the notion of Lorentz spaces. In \Cref{sec:eps regularity}, we prove the $\eps$-regularity result \Cref{thm:eps regularity}. In \Cref{sec:Energy quantization}, we recall the conservation laws originally derived in \cite{BernardNoether}, prove $L^p$-estimates for the conformal factor in neck regions and derive Lorentz-type estimates on annuli independent of the conformal class. In \Cref{sec:L21 estimates on mean curvature in neck regions}, we derive $L^{2,1}$-estimates for the mean curvature in neck regions. In \Cref{sec:L2weak quantization in neck regions}, we prove small $L^{2,\infty}$-bounds for the mean curvature in the neck regions. In \Cref{sec:Proof main result}, we prove the main results \Cref{thm:Energy quantization}, \Cref{thm:continuation of energy quantization} and \Cref{thm:Compactness of constrained Willmore surfaces}. In \Cref{sec:Other stuff}, we prove bounds for the Lagrange multipliers for minimizers of \eqref{isoperimetric problem} and \eqref{total mean curvature problem} in order to prove \Cref{cor:Compactness minimizers}. Finally, in Appendix \hyperref[sec:Appendix Bubble neck decomposition]{A}, we recall the bubble-neck decomposition from \cite{BernardRiviere} and in Appendix \hyperref[sec:Appendix Derivation Conservative System]{B}, we rederive the conservation laws for constrained Willmore immersions.

We want to mention that the boundedness of the Lagrange multipliers $\alpha$, $\beta$, $\gamma$ is a necessary condition throughout: An example of constrained Willmore surfaces converging to a round sphere in $W^{2,2}(\mb S^2)$, with diverging Lagrange multipliers, where $C^2$-convergence of the sequence fails, is given in \cite[Section 7]{MasterThesis}. Finally, the analysis for the case when the conformal structures diverge is expected to be analogous to \cite{LaurainRiviereEnergyQuantization} and this paper.
\section{Notation and definitions}\label{sec:Notations}
The vector symbol on a function indicates that the function takes values in $\R^3$. For $A\subset X$, we write $\chi_A:X\to \R$ for the characteristic function on $A$. If $\mu$ is a measure on $X$, $A$ is $\mu$-measurable such that $0<\mu(A)<\infty$ and $f:X\to \R$ is measurable, we set
\[\dashint_A f\dif{\mu} \cqq \frac{1}{\mu(A)} \int_A f\dif{\mu}.\]
We denote for $f:\C\setminus \{0\} \to \R$
\[\partial_r f(x) \cqq \frac{\dd}{\dd t} \bigg \vert _{t=0} f\left (x + t \frac{x}{|x|}\right ), \quad \partial_{\theta} f(x) \cqq \frac{\dd}{\dd t} \bigg \vert _{t=0} f\left (x +t ix \right ),\]
which is equivalent to expressing $x = r e^{i\theta}$ in polar coordinates and taking derivatives with respect to this coordinate system. Notice that $|\partial_r f(x)|\leq |\nabla f(x)|$ and $\frac{1}{|x|}  |\partial _{\theta} f(x)| \leq |\nabla f(x)|$. If we do not specify the center of a ball $B_r$, we always mean the ball centered around the origin.

When working with vector valued maps in a flat chart $B_1\subset \C$, we will adopt the notation from \cite{RiviereLectureNotes}. That is, for maps $\vec e$, $\vec f:B_1 \to \R^3$ and $\lambda:B_1\to \R$, we denote for example\enlargethispage{0.5cm}
\begin{alignat*}{2}
\nabla \vec e &\cqq \vecto{\parpar{x_1}{\vec e}}{\parpar{x_2}{\vec e}},& \nabla ^\perp \vec e &\cqq \vecto{-\parpar{x_2}{\vec e}}{\parpar{x_1}{\vec e}},\\
\langle \vec e, \nabla \vec f\rangle&\cqq \vecto{\langle \vec e, \parpar{x_1}{\vec f}\rangle}{\langle \vec e, \parpar{x_2}{\vec f}\rangle},&  \langle \vec e, \nabla^\perp \vec f\rangle&\cqq \vecto{-\langle \vec e, \parpar{x_2}{\vec f}\rangle}{\langle \vec e, \parpar{x_1}{\vec f}\rangle}, \\
\vec e\times \nabla \vec f&\cqq \vecto{ \vec e\times \parpar{x_1}{\vec f}}{ \vec e\times \parpar{x_2}{\vec f}},\quad & \vec e\times \nabla ^\perp \vec f&\cqq \vecto{ -\vec e\times \parpar{x_2}{\vec f}}{ \vec e\times \parpar{x_1}{\vec f}},\\
\nabla \lambda \vec e &\cqq \vecto{\parpar{x_1}{\lambda}\vec e}{\parpar{x_2}{\lambda} \vec e},&\nabla \lambda \cdot \nabla \vec e &\cqq \parpar{x_1}{\lambda} \parpar{x_1}{\vec e} + \parpar{x_2}{\lambda} \parpar{x_2}{\vec e},\\
\langle \nabla \vec e, \nabla \vec f\rangle &\cqq \langle \parpar{x_1}{\vec e},\parpar{x_1}{\vec f}\rangle + \langle\parpar{x_2}{\vec e}, \parpar{x_2}{\vec f}\rangle ,\quad & \langle \nabla \vec e, \nabla ^\perp \vec f\rangle &\cqq -\langle \parpar{x_1}{\vec e},\parpar{x_2}{\vec f}\rangle + \langle\parpar{x_2}{\vec e}, \parpar{x_1}{\vec f}\rangle ,\\
\nabla \vec e \times \nabla \vec f &\cqq \parpar{x_1}{\vec e}\times \parpar{x_1}{\vec f} +\parpar{x_2}{\vec e}\times \parpar{x_2}{\vec f},\quad &  \nabla \vec e \times \nabla^\perp \vec f &\cqq -\parpar{x_1}{\vec e} \times \parpar{x_2}{\vec f} + \parpar{x_2}{\vec e} \times \parpar{x_1}{\vec f}. 
\end{alignat*}
If $\vec X = \vecto{\vec X_1}{\vec X_2}$, we denote
\[\Div \vec X \cqq \parpar{x_1}{\vec X_1} + \parpar{x_2}{\vec X_2}\quad\text{and}\quad\Delta \vec e \cqq \Div(\nabla \vec e) = \partial^2_{x_1}\vec e+ \partial^2_{x_2}\vec e.\]

\subsection{Constrained Willmore surfaces}\label{subsec:Constrained Willmore Surfaces}
We suppose that $\Sigma$ is a two-dimensional, connected, oriented, closed and smooth manifold and we let $\vec \Phi:\Sigma \to \R^3$ be a smooth immersion.
Recall from \eqref{intro:EL equation} that we defined constrained Willmore surfaces to be stationary points of the functional $\mc F$ defined in \eqref{functional F definition}. To make this more explicit, we consider the first variation of the terms involved:
\begin{align}
\delta \mc W(\vec \Phi)(\vec w) &= \int_{\Sigma} (\Delta_g H + 2H(H^2-K))\langle \vec n, \vec w\rangle \dif{\mu} ,\label{extrinsic Willmore gradient}\\
\delta \mc A(\vec \Phi)( \vec w) &= \int_{\Sigma} -2 H\langle\vec n , \vec w\rangle \dif{\mu} ,\label{extrinsic area gradient}\\
 \delta \mc V(\vec \Phi)(\vec w)&=  \int_{\Sigma}- \langle \vec n , \vec w\rangle \dif{\mu} , \label{extrinsic volume gradient}\\
\delta \mc M(\vec \Phi)(\vec w) &=  \int_{\Sigma} -K \langle \vec n , \vec w\rangle \dif{\mu} \label{extrinsic total mean curvature gradient}.
\end{align}
Here, $\Delta_g$ is the Laplace-Beltrami operator induced by the metric $g=g_{\vec \Phi}$ and $\vec w:\Sigma \to \R^3$ is smooth. In general, the Willmore gradient $\delta \mc W$ poses analytical challenges as it requires $H$ to be in $L^3(\Sigma,\mu)$, whereas the Willmore energy only gives $H\in L^2(\Sigma,\mu)$. In \cite{RiviereAnalysisAspects}, it was shown that in conformal coordinates, the Willmore gradient may be recast in divergence form. Conformal coordinates are charts in which the metric takes the form $g = e^{2\lambda} \id$ for some smooth function $\lambda$, called the \emph{conformal factor}. In these coordinates, the induced area measure is $\dif{\mu} = e^{2\lambda} \dif{x}$. If $\vec \Phi:B_1 \to \R^3$ is a conformal immersion, then \enlargethispage{1cm}
\begin{align}
\delta \mc W(\vec \Phi)(\vec w)  &=  \frac{1}{2} \int _{B_1} \left \langle \Div\left (2\nabla \vec H -3 H \nabla \vec n + \vec H \times \nabla^\perp \vec n\right ) , \vec w\right \rangle \dif{x}\label{First variation of Willmore 1} \\
&=-\frac{1}{2}\int _{B_1}\left \langle \Div\left (\nabla \vec H - 3 \pi_{\vec n} (\nabla \vec H) + \nabla ^\perp \vec n \times \vec H\right ) , \vec w \right \rangle\dif{x}. \label{First variation of Willmore 2}
\end{align}
for all $\vec w \in C_c^\infty(B_1, \R^3)$. Here, $\pi_{\vec n}$ denotes the orthogonal projection onto $\vec n$. \cite{BernardNoether} showed that the other functionals may also be recast in a divergence form as a consequence of Noether's theorem, namely\footnote{Note that since we work in our chart parametrization, we integrate with respect to the Lebesgue measure $\mc L^2$, not with respect to $\mu$ and these distributional variations have to be understood in this sense as well.}
\begin{align}
\delta \mc A &= -\Delta \vec \Phi, \label{First variation of Area}\\
\delta \mc V &= \frac{1}{2}\Div(\vec \Phi \times \nabla^\perp \vec \Phi),\label{First variation of Volume}\\
\delta \mc M &= - \frac{1}{2} \Div(\nabla \vec n  + 2H \nabla \vec \Phi).\label{First variation of Total Mean Curvature}
\end{align}
We define 
\begin{equation}
\ms L_{\vec n} (\vec w) = \Div(\nabla \vec w -3\pi_{\vec n}(\nabla \vec w) + \nabla^\perp \vec n \times \vec w).\label{L operator associated to Willmore variation}
\end{equation}
$\ms L_{\vec n}$ was originally introduced in \cite{RiviereAnalysisAspects}, where it was shown that this is a locally invertible, self-adjoint elliptic operator. This allows $\delta \mc W$ to be defined in a weak sense. We say that $\vec \Phi$ is a \emph{Willmore immersion} if it is a stationary point of $\mc W$, i.e., $\ms L_{\vec n} \vec H = 0$ in every conformal parametrization. It is called \emph{constrained Willmore immersion} if it is a stationary point of $\mc F$, i.e.,
\begin{equation}
-\frac{1}{2}\ms L_{\vec n}(\vec H) + \Div \left (-\alpha \nabla \vec \Phi + \frac{\beta}{2} \vec \Phi \times \nabla ^\perp \vec \Phi- \frac{\gamma}{2} (\nabla \vec n + 2H \nabla \vec \Phi) \right ) = 0.\label{constrained Willmore surface}
\end{equation}
We define 
\begin{equation}
\vec T \cqq -\alpha \nabla \vec \Phi + \frac{\beta}{2} \vec \Phi \times \nabla ^\perp \vec \Phi- \frac{\gamma}{2} (\nabla \vec n + 2H \nabla \vec \Phi).\label{definition T}
\end{equation}
The divergence term of \eqref{constrained Willmore surface} is the linear combination for the first variations of area, total mean curvature and (algebraic) volume. This is equivalent to the scalar equation in non-divergence form
\begin{equation}
\Delta_g H + 2H (H^2-K) - 2\alpha H - \beta - \gamma K =0.
\label{extrinsic constrained system}
\end{equation}
Notice that when $\tilde{\vec \Phi} = r \vec \Phi$ is a rescaled immersion, then $\tilde{\vec \Phi}$ is still a constrained Willmore immersion with coefficients
\begin{equation}
\tilde{\alpha} = r^{-2} \alpha, \quad \tilde{\beta} = r^{-3} \beta, \quad \tilde{\gamma} = r^{-1} \gamma.\label{rescalings of constants}
\end{equation}
\subsection{Lorentz spaces}\label{subsec:Lorentz spaces}
For a detailed introduction to Lorentz spaces, see \cite{Grafakos}. Suppose $\Omega\subset \R^n$ is measurable. For $1\leq p, q<\infty$ define the Lorentz space $L^{p,q}(\Omega)$ as the set of all measurable functions $f$ such that the quasi-norm
\[|f|_{L^{p,q}(\Omega)} \cqq p^{1/q}\left (\int_0^\infty t^q \mc L^n(\{x,\, |f(x)| > t\})^{\frac{q}{p}}  \frac{\dif{t}}{t} \right )^{\frac{1}{q}} \]
is finite. For $q=\infty$, we define
\[|f|_{L^{p,\infty}(\Omega)} \cqq \left (\sup_{t>0} (t^p \mc L^n(\{x,\,|f(x)|>t\}))\right )^{\frac{1}{p}}.\]
For $p>1$, $|\cdot|_{L^{p,q}(\Omega)}$ is equivalent to a norm $\|\cdot \|_{L^{p,q}(\Omega)}$ on $L^{p,q}(\Omega)$ with which this space becomes a Banach space, see \cite[Exercise 1.4.3, 1.1.12]{Grafakos}. We have the inclusions 
\[L^{p,1}(\Omega)\subset L^{p,q}(\Omega)\subset L^{p,q'}(\Omega)\subset L^{p,\infty}(\Omega)\]
for $1< q< q' < \infty$. Furthermore, $L^{p}(\Omega) = L^{p,p}(\Omega)$ for $1\leq p < \infty$. Finally, $L^{\frac{p}{p-1}, \frac{q}{q-1}}(\Omega)$ is the dual space of $L^{p,q}(\Omega)$ for $1<p<\infty$ and $1\leq q< \infty$\footnote{However, the dual space of $L^{p,\infty}(\Omega)$ is in general strictly larger than $L^{\frac{p}{p-1},1}(\Omega)$.} with the $L^2$-pairing. We will later use the duality between $L^{2,1}(\Omega)$ and $L^{2,\infty}(\Omega)$. In the special case $q=1$, $\|\cdot \|_{L^{p,1}(\Omega)}$ is given by
\begin{equation}
\|f\|_{L^{p,1}(\Omega)} = \frac{p^2}{p-1} \int_0^\infty \mc L^n(\{x,\,|f(x)|>t\})^{1/p}\dif{t},\label{Lp1 norm definition}
\end{equation}
see \cite{MichelatPointwiseExpansion}. A quick calculation in the case $n=2$ then shows that for $R>2r>0$
\begin{equation}
c \log(R/r) \leq \left \|\frac{1}{|z|}\right \|_{L^{2,1}(B_{R}\setminus B_r)} \leq C \log(R/r),\label{L21 norm for gradient of log}
\end{equation}
whereas
\begin{equation}
\left \|\frac{1}{|z|}\right \|_{L^{2,\infty}(\C)}\leq C.\label{L2infty norm for gradient of log}
\end{equation}
Suppose that $\Omega\subset \R^n$ is open and bounded and $p>2$. Then
\[\mc L^n(\{x,\,|f(x)|>t\})^{1/2} \leq t^{-p/2} \|f\|_{L^p}^{p/2}\]
and so for any $s>0$
\[\|f\|_{L^{2,1}(\Omega)}\leq 4 \int_0^s \mc L^n(\Omega)^{1/2} \dif{t} + 4\|f\|_{L^p(\Omega)}^{p/2}\int_s^\infty t^{-p/2} \dif{t} .\]
Minimizing the right-hand side over $s$ yields $s=\|f\|_{L^p(\Omega)} \mc L^n(\Omega)^{-1/p}$ and thus
\begin{equation}
\|f\|_{L^{2,1}(\Omega)} \leq 4\frac{p}{p-2} \|f\|_{L^p(\Omega)} \mc L^n(\Omega)^{\frac{1}{2}-\frac{1}{p}}. \label{Lp L21 inequality}
\end{equation} 
\section{\texorpdfstring{$\eps$}{epsilon}-regularity for constrained Willmore surfaces}\label{sec:eps regularity}
In this section, we want to establish the $\eps$-regularity result \Cref{thm:eps regularity}. Before proving it, we make the following remark:
\begin{remark}\label{rem:various things} \hfill
\begin{enumerate}[label = \arabic*)]
\item \label{rem:typical argument harmonic functions} The following argument is frequently used for applying estimates with prescribed zero boundary condition: Suppose that $\Delta f = g $ in $B_1$. If $f$ is locally integrable in $B_1$, then using the mean value theorem, we choose $r\in (3/5, 4/5)$ such that $\|f\|_{L^1(\partial B_r)} \leq C \|f\|_{L^1(B_{4/5})}$. Denote by $\nu$ the solution to
\begin{equation}
\begin{cases}
\Delta \nu = 0 & \text{ in }B_r,\\
\nu = f&\text{ on }\partial B_r.
\end{cases}\label{harmonic solution nu}
\end{equation} 
Using the Poisson representation formula for the solution of \eqref{harmonic solution nu}, we obtain that 
\begin{equation}
\|\nu\|_{C^k(B_{1/2})}\leq C_k \|f\|_{L^1(\partial B_r)} \leq C_k \|f\|_{L^1(B_{4/5})}.\label{harmonic part is good inside half ball}
\end{equation}
Then we apply suitable estimates with zero boundary conditions, e.g. from \cite{Helein}, to $f - \nu$ in the ball $B_r$. 
\item Suppose that $\vec \Phi: \Omega\to \R^3$ is a conformal immersion. Then it follows by Jensen's inequality that
\begin{equation}
\mc A(\vec \Phi) = \int _{\Omega} e^{2\lambda} \dif{x} \geq |\Omega| e^{2\overline{\lambda}}, \label{Jensen application}
\end{equation}
where $\overline{\lambda} = \dashint_{\Omega} \lambda \dif{x}$ is the average conformal factor. \label{rem:average conformal factor bounded by area} 
\item \label{rem:bounded oscillation for conformal factor} Suppose $\vec \Phi:B_1\to \R^3$ is a conformal immersion with conformal factor $\lambda$. Suppose further that $\vec \Phi$ satisfies
\[\int _{B_1} |\nabla \vec n_{\vec \Phi} |^2 \dif{x} < \frac{8\pi}{3}.\]
	Doing the same arguments as in \cite[Theorem 5.5]{RiviereLectureNotes} and replacing the bound $\|\nabla \lambda\|_{L^{2,\infty}(B_1)}\leq C$ by a bound on $\|\nabla \lambda\|_{L^1(B_1)}$, one can show that for $r\in (0,1)$ and $\overline{\lambda} = \dashint_{B_1} \lambda \dif{x}$
\begin{equation}
\|\lambda - \overline{\lambda }\|_{L^\infty(B_{r})}\leq  C(r, \|\nabla \lambda\|_{L^1(B_1)}),\label{lambda l infinity estimate up to difference}
\end{equation}
where $C(r, \|\nabla \lambda\|_{L^1(B_1)})$ is a constant only depending on $\|\nabla \lambda\|_{L^1(B_1)}$ and $r$.
\item \label{rem: eps regularity} Notice that due to \eqref{rescalings of constants}, all the quantities appearing in \Cref{thm:eps regularity} are invariant under rescalings of $\vec \Phi$. 
%Notice furthermore that when we work with a sequence of conformal immersions $\vec \Phi_k :\Sigma\to \R^3$ whose conformal class remains in a compact subset of the moduli space, we automatically obtain a uniform bound of $\|d \lambda_k\|_{L^{2,\infty}(\Sigma,g_0)}$ by \cite[Theorem 5.4]{RiviereLectureNotes}, and if we place bounds on the area, \Cref{rem:various things} \ref{rem:average conformal factor bounded by area} gives the estimates for the average $\overline{\lambda}$. Hence, we are in a position to apply \eqref{eps regularity 3}.
\end{enumerate}
\end{remark}

\begin{proof}[Proof of \Cref{thm:eps regularity}]
We prove the claims for some choice of $r$, the general result follows by the same arguments, just with different choices of radii throughout. By \cite[Theorem 4.3]{MondinoScharrer}, it is already known that $\vec \Phi$ is a smooth immersion. We will follow \cite[Section III.2]{RiviereAnalysisAspects} and mostly focus on the differences.  Thanks to \Cref{rem:various things} \ref{rem: eps regularity}, we may rescale $\vec \Phi$ by the factor $e^{-\overline{\lambda}}$ to assume $\overline{\lambda} = 0$. Working under a $\frac{8\pi}{3}$ threshold for the energy allows to apply \Cref{rem:various things} \ref{rem:bounded oscillation for conformal factor} and by \eqref{lambda l infinity estimate up to difference}, it holds $\|\lambda\|_{L^\infty(B_{1/2})}\leq C$. Here and in the following, all constants $C$ can depend on $\|\nabla \lambda\|_{L^1(B_1)}$, $\alpha$, and $\gamma$. The dependence on $\beta$ however will be tracked explicitly for now.
%Notice that by Jensen's inequality, it follows
%\begin{equation}
%\mc A(\vec \Phi \vert _{B_{1/2}}) \geq C_0 > 0 \label{uniform area estimate on half radius ball}.
%\end{equation}
Let $\omega\in C_c^\infty(B_{1/2}, \R)$ such that $\omega \vert_{B_{1/4}} = 1$. Then it holds that
\begin{equation}
\ms L_{\vec n} (\omega \vec H) = \vec g_1 + \vec g_2 +\vec g_3 + \vec g_4,\label{decomposition in g1 to g4}
\end{equation}
where $\vec g_1$ to $\vec g_4$ are defined as 
\begin{align}
\vec g_1 &\cqq 2 \Div( \vec H\nabla \omega) - \vec H \Delta \omega - 6 \Div(\vec H \nabla \omega) +3\vec H\Delta \omega ,\\
\vec g_2 &\cqq 3 H \nabla \vec n \cdot \nabla \omega - \vec H \times \nabla^\perp \vec n \cdot \nabla \omega,\\
\vec g_3 &\cqq 2\omega \Div (-\alpha \nabla \vec \Phi - \frac{\gamma}{2} (\nabla \vec n + 2H \nabla \vec \Phi)) =- \omega (\gamma \Div (\nabla \vec n + 2H \nabla \vec \Phi) +4\alpha e ^{2\lambda} \vec H) \label{g3 definition},\\
\vec g_4 &\cqq \beta \omega \Div(\vec \Phi \times \nabla ^\perp \vec \Phi) = -2\beta e^{2\lambda} \omega\vec n.\label{g4 definition}
\end{align}
$\vec g_1$ and $\vec g_2$ were already defined in \cite[(III.7), (III.8)]{RiviereAnalysisAspects} up to a sign mistake.
%\begin{align*}
%\ms L_{\vec n} (\chi \vec H) &= 2\Div (\nabla \chi \vec H) - \vec H \Delta \chi - 6 \Div(\pi_{\vec n}(\vec H) \nabla \chi) \\
%&\quad + 3 (\vec H \cdot \nabla \vec n) \cdot \vec n \cdot \nabla \chi + 3(\vec H \cdot \vec n) \cdot \nabla \vec n \cdot \nabla \chi \\
%&\quad + 3\Delta \chi \pi_{\vec n} (\vec H) - \vec H \wedge \nabla^\perp \vec n  \cdot \nabla \chi.
%\end{align*}
%I'm getting
%\begin{align*}
%\ms L_{\vec n}(\chi \vec H) &= \chi (\Delta \vec H - 3\Div(\pi_{\vec n}(\nabla \vec H)) + \nabla^\perp \vec n \times \nabla \vec H)\\
%&\quad + 2 \nabla \chi \nabla \vec H - 3 \Div(\nabla \chi \vec H) + \nabla \chi \cdot \nabla^\perp \vec n \times \vec H\\
%&\quad + \Delta \chi \vec H\\
%&= - \Div(\nabla \chi \vec H) -\Delta \chi \vec H  + \nabla \chi \cdot \nabla^\perp \vec n \times \vec H
%\end{align*}
%
%\begin{align*}
%&\quad  - 6 \Div(\pi_{\vec n}(\vec H) \nabla \chi) \\
%&\quad + 3 (\vec H \cdot \nabla \vec n) \cdot \vec n \cdot \nabla \chi + 3(\vec H \cdot \vec n) \cdot \nabla \vec n \cdot \nabla \chi \\
%&\quad + 3\Delta \chi \pi_{\vec n} (\vec H) \\
%&= - 3 \nabla \chi \cdot \nabla(\pi_{\vec n}(\vec H)))-3 \Delta \chi \pi_{\vec n}(\vec H) 
%\end{align*}
%\begin{align*}
%&\quad  - 3 \nabla \vec H\cdot \nabla \chi\\
%&\quad  + 3 (\vec H \cdot \nabla \vec n) \cdot \vec n \cdot \nabla \chi+ 3(\vec H \cdot \vec n) \cdot \nabla \vec n \cdot \nabla \chi  \\
%&= 0
%\end{align*}
Here, we used \eqref{constrained Willmore surface}. By \cite[(III.9), (III.10)]{RiviereAnalysisAspects}, the uniform bound for $\lambda$ allows us to conclude
\[\|\vec g_1\|_{H^{-1}(B_{1/2})}^2 +\|\vec g_2 \|_{L^1(B_{1/2})}\leq C \int _{B_{1/2}\setminus B_{1/4} }|\nabla \vec n|^2 \dif{x}.\]
We use here that $|\vec H| \leq e^{-\lambda} |\nabla \vec n|$. Similarly, $\vec g_3$ and $\vec g_4$ satisfy
\begin{equation}
\|\vec g_3\|_{H^{-1}(B_{1/2})}^2 \leq C \int _{B_{1/2}} |\nabla \vec n|^2 \dif{x}, \quad \|\vec g_4 \|_{L^\infty(B_{1/2})} \leq C |\beta|.
\end{equation}
Let us first prove \eqref{c3 estimate}. We will choose $\eps_1$ later. Notice that by the Poincar\'e inequality, it holds for $\overline{\vec n} \cqq \dashint _{B_{2^{-3}}} \vec n \dif{x}$
\[\|\vec n - \overline{\vec n}\|_{L^2(B_{2^{-3}})} \leq C \|\nabla \vec n\|_{L^2(B_{2^{-3}})}.\]
We let $\tilde{\omega} \in C_c^\infty(B_1, \R)$ such that $\chi_{B_{2^{-3}}}\leq \tilde{\omega}\leq \chi_{B_{\rho}}$, where $2^{-3} < \rho = \rho(\|\nabla \lambda\|_{L^1(B_1)}) \leq 1/4$ will be chosen later. We obtain, using the self-adjointness of $\ms L_{\vec n}$ proved in \cite{RiviereAnalysisAspects},
\begin{align}
\left |\int _{B_1} \langle \tilde{\omega}  \overline{\vec n} , \ms L_{\vec n}(\omega \vec H)\rangle \dif{x} \right |&=\left | \int _{B_{1/2}} \langle \omega \vec H , \ms L_{\vec n}(\tilde{\omega} \overline{\vec n})\rangle \dif{x} \right |\notag\\
&= \left |\int _{B_{1/2}} \langle\omega \vec H , \Delta \tilde{\omega}\overline{\vec n}-3 \Div(\vec n  \langle \vec n , \overline{\vec n}\rangle \nabla \tilde{\omega} ) + \nabla^\perp \vec n \times  \overline{\vec n} \nabla \tilde{\omega} \rangle \dif{x}\right |\notag\\
&\leq C\int _{B_{1/2}}  |\vec H|(1 + |\nabla \vec n|)\dif{x} \leq C \|\nabla \vec n\|_{L^2(B_{1/2})}.\label{first estimate for estimate of integral tested by average of n}
\end{align} 
On the other hand, \eqref{decomposition in g1 to g4} together with the bounds on $\vec g_1$, $\vec g_2$ and $\vec g_3$ yield
\begin{align}
\left |\int _{B_{1/2}} \langle\tilde{\omega}  \overline{\vec n} , \ms L_{\vec n}(\omega \vec H)\rangle \dif{x} \right |&\geq -C\|\nabla \vec n\|_{L^2(B_{1/2})} + 2|\beta| \left |\int_{B_{1/2}}  e^{2\lambda} \omega \tilde{\omega} \langle \vec n , \overline{\vec n}\rangle\dif{x}\right |. \label{lower bound for the testing with average n}
\end{align}
We estimate
\begin{align}
\int _{B_{1/2}} e^{2\lambda} \omega \tilde{\omega} \langle \vec n , \overline{\vec n}\rangle \dif{x}& = \int _{B_{2^{-3}}} e^{2 \lambda} \dif{x} + \int_{B_{2^{-3}}} e^{2\lambda} \langle \vec n , \overline{\vec n} - \vec n\rangle \dif{x}  + \int _{B_{\rho}\setminus B_{2^{-3}}} e^{2\lambda} \tilde{\omega} \langle \vec n , \overline{\vec n}\rangle \dif{x} \notag\\
&\geq C e^{-2\|\lambda\|_{L^\infty}} - C e^{2\|\lambda\|_{L^\infty}} \sqrt{\eps_1} - C(\rho^2 - 2^{-6}) e^{2\|\lambda\|_{L^\infty}}.\label{estimate integral tested by average of n}
\end{align}
Using \eqref{lambda l infinity estimate up to difference}, we may first choose $\eps_1$ small and then $\rho$ close to $2^{-3}$ such that the right-hand side of \eqref{estimate integral tested by average of n} can be bounded from below by $C=C(\|\nabla \lambda\|_{L^1})>0$. This is the point where we use explicitly that $\eps_1$ depends on $\|\nabla \lambda\|_{L^1}$. Together with \eqref{first estimate for estimate of integral tested by average of n} and \eqref{lower bound for the testing with average n}, \eqref{estimate integral tested by average of n} implies 
\[|\beta| \leq C \|\nabla \vec n\|_{L^2(B_{1})},\]
which is \eqref{c3 estimate}.

\medskip

Let us now prove \eqref{eps regularity 1} and \eqref{eps regularity 2}, for which we only assume the bound \eqref{small dirichlet energy} to hold. Let $\vec v_1$ to $\vec v_4$ be defined through \cite[Lemma A.1, Lemma A.3]{RiviereAnalysisAspects} as the unique solutions of 
\begin{equation}
\begin{cases}
\ms L_{\vec n} \vec v_i =  \vec g_i& \text{ in } B_{1/2},\\
\vec v_i =0 &\text{ on } \partial B_{1/2},
\end{cases} \quad\text{for $i=1,\ldots, 4$.}
\end{equation}
\cite[Lemma A.1, Lemma A.3]{RiviereAnalysisAspects} yields the bounds
\begin{equation}
\|\nabla \vec v_1\|_{L^{2}(B_{1/2})}+  \|\nabla \vec v_2\|_{L^{2,\infty}(B_{1/2})}\leq   C \|\nabla \vec n\|_{L^2(B_{1})},\label{v1 and v2 bounds}
\end{equation}
\begin{equation}
\|\nabla \vec v_3\|_{L^2(B_{1/2})} \leq C\|\nabla \vec n\|_{L^2(B_{1})}, \quad \|\nabla \vec v_4\|_{L^2(B_{1/2})} \leq C |\beta|. \label{v3 and v4 bounds}
\end{equation}
By \cite[Lemma A.1]{RiviereAnalysisAspects}
Then $\vec v \cqq \omega \vec H - \vec v_1 - \vec v_2 -\vec v_3-\vec v_4\in L^2(B_{1/2})$ satisfies $\ms L_{\vec n}(\vec v) =0$ and $\vec v \vert _{\partial B_{1/2}} = 0$. Using \cite[Lemma A.8]{RiviereAnalysisAspects}, we obtain $\vec v=0$. 
We obtain from \eqref{v1 and v2 bounds} and \eqref{v3 and v4 bounds} that
\begin{equation}
\|\nabla (\omega \vec H)\|_{L^{2,\infty}(B_{1/2})} \leq  C \|\nabla \vec n\|_{L^2(B_{1})}+C|\beta|.\label{eps reg: H L2 weak estimate}
\end{equation}
We do the Hodge decomposition from \cite[Proposition 3.3.9]{Helein}
\begin{equation}
\nabla \vec H - 3\pi_{\vec n} (\nabla \vec H) = \nabla \vec C + \nabla ^\perp \vec D + \nabla \vec r\label{eps reg: Hodge decomposition}
\end{equation}
on $B_{1/4}$, where $\vec r$ is harmonic and $\vec C$ and $\vec D$ satisfy 
\begin{equation}
\begin{cases}
\Delta \vec C = \Div(\vec H \times \nabla ^\perp \vec n ) + \vec g_3 + \vec g_4  &\text{ on }B_{1/4},\\
\vec C=0 &\text{ on }\partial B_{1/4},
\end{cases}\label{C-equation}
\end{equation}
and
\begin{equation}
\begin{cases}
\Delta \vec D = -3\Div(\pi_{\vec n}(\nabla ^\perp \vec H)) &\text{ on }B_{1/4},\\
\vec D=0 &\text{ on }\partial B_{1/4}.
\end{cases}\label{D-equation}
\end{equation}
Using again the Wente inequality \cite[Theorem 3.4.5]{Helein} for \eqref{D-equation} and splitting the right-hand side of \eqref{C-equation}, we obtain the estimates
\begin{align}
\|\nabla \vec C\|_{L^2(B_{1/4})}+\|\nabla \vec D\|_{L^2(B_{1/4})} &\leq C(\|\nabla \vec n\|_{L^2(B_{1})}\|\nabla \vec H\|_{L^{2,\infty}(B_{1/4})} +  \|\nabla \vec n\|_{L^2(B_{1})}+ |\beta|)\notag\\
&\leq C(\|\nabla \vec n\|_{L^2(B_{1})} +|\beta|).\label{eps reg: L2 estimate C and D}
\end{align}
Together with the bound for the harmonic term $\nabla \vec r$ \cite[(III.18), (III.19)]{RiviereAnalysisAspects}, \eqref{eps reg: H L2 weak estimate}, \eqref{eps reg: Hodge decomposition}, and \eqref{eps reg: L2 estimate C and D}, we obtain 
\[\|\nabla \vec H\| _{L^2(B_{2^{-3}})}\leq C(\|\nabla \vec n\|_{L^2(B_{1})} +|\beta|).\]
By the same argument as in \cite{RiviereAnalysisAspects}, we obtain by the means of another Hodge decomposition the estimate
\[\|\nabla \vec H\| _{L^{2,1}(B_{2^{-4}})}\leq C(\|\nabla \vec n\|_{L^2(B_{1})} + |\beta|).\]
From this, the uniform $L^2$ bound of $\vec H$, and \cite[Theorem 3.3.4]{Helein}, we obtain the uniform bound $\|\vec H\|_{L^\infty(B_{2^{-4}})}\leq C(\|\nabla \vec n\|_{L^2(B_{1})} + |\beta|)$. Differentiating $\Delta \vec \Phi  = e^{2\lambda} \vec H$ yields
\begin{equation}
\Delta \nabla \vec \Phi =\nabla \Delta \vec \Phi = 2\nabla \lambda e^{2\lambda} \vec H + e^{2\lambda} \nabla \vec H.\label{Delta Phi equation}
\end{equation}
By \cite[(5.21)]{RiviereLectureNotes}, there are $\vec e_1, \vec e_2\in W^{1,2}(B_1)$ with 
\begin{equation}
\Delta \lambda = \langle \nabla^\perp \vec e_1, \nabla \vec e_2\rangle\label{Poisson equation for lambda}
\end{equation}
satisfying $\|\nabla \vec e_1\|_{L^{2}(B_1)}+\|\nabla \vec e_1\|_{L^{2}(B_1)}\leq C \|\nabla \vec n\|_{L^2(B_{1})}$. Together with Wente's inequality \cite[Theorem 3.4.1]{Helein} and also \Cref{rem:various things} \ref{rem:typical argument harmonic functions}, \eqref{Poisson equation for lambda} gives
 \[\|\nabla \lambda\|_{L^{2,1}(B_{1/2})}\leq C \|\nabla \vec n\|_{L^2(B_1)} + C\]
 holds. We obtain that the right-hand side of \eqref{Delta Phi equation} is bounded in $L^{2,1}$, namely
\begin{equation}
\|2\nabla \lambda e^{2\lambda} \vec H + e^{2\lambda} \nabla \vec H\|_{L^{2,1}(B_{2^{-4}})} \leq C(\|\nabla \vec n \|_{L^2(B_1)} +|\beta|).\label{L21 estimates right hand side}
\end{equation}
Given $\Omega \subset \C$ open, bounded and with smooth boundary, $p\in (1,\infty)$, and $g\in L^p(\Omega)$, the solution $f$ to
\begin{equation}
\begin{cases}
\Delta f = g &\quad\text{in $\Omega$},\\
\;\;\; f = 0 &\quad\text{ on $\partial \Omega$},
\end{cases}
\end{equation}
satisfies $\|\nabla^2 f \|_{L^p(\Omega)}\leq C(p,\Omega) \|g\|_{L^p(\Omega)}$ by \cite[Theorem 3.29, 3.31]{AmbrosioLectureNotes}. By interpolation, see \cite[Theorem 3.3.3]{Helein}, we have $\|\nabla ^2 f\|_{L^{2,1}(\Omega)} \leq C(\Omega) \|g\|_{L^{2,1}(\Omega)}$. Applying this to \eqref{Delta Phi equation} and \eqref{L21 estimates right hand side}, we obtain with \Cref{rem:various things} \ref{rem:typical argument harmonic functions}
\begin{equation}
\|\nabla^3 \vec \Phi\|_{L^{2,1}(B_{2^{-5}})} \leq C(\|\nabla \vec n \|_{L^2(B_1)} +|\beta|).\label{estimate third derivatives Phi}
\end{equation}
It follows again from \cite[Theorem 3.3.4]{Helein} that $\nabla ^2 \vec \Phi \in L^{\infty}(B_{2^{-5}})$ with 
\begin{equation}
\|\nabla^2 \vec \Phi\|_{L^{\infty}(B_{2^{-5}})} \leq C(\|\nabla \vec n \|_{L^2(B_1)} +|\beta|).\label{L infinity bound for nabla 2 Phi}
\end{equation}
Since $|\nabla \vec n| \leq C |\nabla ^2 \vec \Phi|$, we obtain the same bound for $\|\nabla \vec n \|_{L^\infty(B_{2^{-5}})}$. From here, we can bootstrap the equations via the system \eqref{conservative system} to obtain \eqref{eps regularity 1} and \eqref{eps regularity 2}. Together with \eqref{c3 estimate}, we obtain \eqref{eps regularity 3}, which finishes the proof.
\end{proof}
\begin{remark}
The reason why we need to treat $\beta$ different from the other Lagrange multipliers is that $\delta \mc V = -\vec n$ does not depend on the curvature, whereas $\delta \mc M = -K\vec n$ and $\delta \mc A= -2\vec H$ do, and so an assumption like \eqref{small dirichlet energy} does not a priori give bounds for this term. But by solving \eqref{constrained Willmore surface} for $\beta$, we can express $\beta$ by terms that depend on the curvature and this argument allows us to show bounds for $\beta$.
\end{remark}
\section{Energy quantization for constrained Willmore surfaces}\label{sec:Energy quantization}
Our goal in the upcoming sections will be to prove \Cref{thm:Energy quantization}. For this, we will follow the strategy developed by \textsc{BernardRiviere} and \textsc{Rivi\`ere} \cite{BernardRiviere}, where they obtained an energy quantization in the case of unconstrained Willmore surfaces. The strategy is the following:
\begin{enumerate}[label = \arabic*)]
\item We consider a sequence $\vec \Phi_k:\Sigma \to \R^3$ of constrained Willmore surfaces with bounded Dirichlet energy $\mc E(\vec \Phi_k) \leq \Lambda$ such that the conformal classes induced by $\vec \Phi_k$ remain bounded in the moduli space. Upon reparametrization (see \cite[Corollary 4.4]{RiviereLectureNotes}), we may assume that $\vec \Phi_k$ are conformal with respect to the constant curvature metric $h_k$ in the same conformal class as $g_{\vec \Phi_k}$. Then $h_k$ converge smoothly to some limiting metric $h_\infty$ on $\Sigma$. Because of the $\eps$-regularity \Cref{thm:eps regularity}, the immersions are bounded in $C^l$ away from finitely many energy concentration points. In particular, up to a subsequence, they converge smoothly away from these points to some limiting immersion. We let $\eps_0>0$. We want to identify the bubble and neck domains around the $a_i$. Here, the bubbles are the domains where energy accumulates, which can be parametrized in $h_k$-conformal, converging coordinates in the form 
\[B(i,j,\alpha, k) \cqq B_{\alpha^{-1}\rho^{i,j}_k}(x^{i,j}_k) \setminus \bigcup_{j'\in K^{i,j}} B_{\alpha \rho^{i,j}_k}(x^{i,j'}_k),\]
see \eqref{bubble domains}. Here, $K^{i,j}$ is an index list of bubbles that are directly contained in the main bubble and $x^{i,j'}_k \in B_{2\rho^{i,j}_k}(x^{i,j}_k)$ are the centers of the interior bubbles. After rescaling, first letting $k$ tend to $\infty$ and then letting $\alpha$ tend to 0 allows us to define a limit immersion on this bubble. The necks are the regions connecting two adjacent bubbles and can be parametrized over diverging annuli of the form
\begin{equation}
B_{\alpha \rho^{i,j}_k}(x^{i,j'}_k) \setminus  B_{\alpha^{-1} \rho^{i,j'}_k}(x^{i,j'}_k),\label{neck region in proof strategy}
\end{equation}
where $\alpha \in (0,1)$, $j'\in K^{i,j}$ and $\frac{\rho^{i,j}_k}{\rho^{i,j'}_k}\to \infty$ as $k\to \infty$, see also \eqref{eq:neck annuli}. These neck regions satisfy 
\[\|d\vec n_{\vec \Phi_k}\|^2_{L^2(B_{2\rho}(x^{i,j'}_k)\setminus B_\rho(x^{i,j'}_k))} < \eps_0\]
for all $r \in (\rho^{i,j'}_k, \rho^{i,j}_k/2)$, see \eqref{doubling annulus condition}. This decomposition into bubble and neck regions was originally done in \cite[Proposition III.1]{BernardRiviere}. We recall and slightly adapt it in \Cref{prop:Bubble neck decomposition}.
\item Fix a neck region of the form \eqref{neck region in proof strategy} and consider the shifted immersion $\vec \xi_k(x) \cqq \vec \Phi_k\left (x^{i,j'}_k + x{\rho^{i,j}_k}\right )$. Let $r_k = \rho^{i,j'}_k/\rho^{i,j}_k$ and denote by $\lambda_k$ the conformal factor of $\vec \xi_k$. The goal is to show that the neck regions don't accumulate energy, namely
\begin{equation}
\lim_{\alpha \to 0} \lim_{k\to \infty}\int_{B_{\alpha }  \setminus  B_{\alpha^{-1} r_k} } |\nabla \vec n_{\vec \xi_k}|^{2}\dif{x} =0,\label{idea: nabla n L2 bound}
\end{equation}
see \eqref{no neck energy condition}. In \cite[Lemma V.1]{BernardRiviere}, it is shown that the Gaussian curvature doesn't accumulate in neck regions, namely (which holds true in general, not just for Willmore surfaces)
\[\lim_{\alpha \to 0} \lim_{k\to \infty} \int_{B_{\alpha }  \setminus  B_{\alpha^{-1} r_k} } K_{\vec \xi_k} e^{2\lambda_k}\dif{x}= 0.\]
Using the pointwise equality
\[|\nabla \vec n_{\vec \xi_k}|^2 = e^{2\lambda_k}(4 H_{\vec \xi_k}^2 - 2K_{\vec \xi_k}),\]
it suffices to show that the Willmore energy vanishes in neck regions, i.e.,
\begin{equation}
\lim_{\alpha \to 0} \lim_{k\to \infty} \int_{B_{\alpha }  \setminus  B_{\alpha^{-1} r_k} } H^2_{\vec \xi_k} e^{2\lambda_k} \dif{x}= 0.\label{idea:H L2 norm}
\end{equation}
\item We will show, up to a potential reflection of the neck region, that
\begin{equation}
\lim_{\alpha \to 0} \lim_{k\to \infty} \left \| e^{\lambda_k} \vec H _{\vec \xi_k}  \right \|_{L^{2,1}( B_{\alpha }  \setminus  B_{\alpha^{-1} r_k} )} \leq C.\label{idea L21 bound}
\end{equation}
This is the content of \Cref{lem: L21 control mean curvature} and the main ingredient in this paper. We will also show  
\begin{equation}
\lim_{\alpha \to 0} \lim_{k\to \infty} \|e^{\lambda_k}H_{\vec \xi_k}\|_{L^{2,\infty}(B_{\alpha }  \setminus  B_{\alpha^{-1} r_k} )} = 0, \label{idea: L2infty bound}
\end{equation}
which will be done in \Cref{lem: L2 weak estimate for gauss map annulus has small energy condition}. Recall the duality between $L^{2,1}$ and $L^{2,\infty}$, namely
\[\int f g \dif{x} \leq C \|f\|_{L^{2,1}} \|g\|_{L^{2,\infty}}.\]
The bounds \eqref{idea L21 bound} and \eqref{idea: L2infty bound} then prove the desired energy estimate \eqref{idea:H L2 norm} and \eqref{idea: nabla n L2 bound}, see \eqref{final estimate for L2 bound for H}.
\end{enumerate}
\begin{figure}[H] 
\centering

\tikzset{every picture/.style={line width=0.75pt}} %set default line width to 0.75pt        

\begin{tikzpicture}[x=0.75pt,y=0.75pt,yscale=-1,xscale=1]
%uncomment if require: \path (0,300); %set diagram left start at 0, and has height of 300

%Shape: Polygon Curved [id:ds3520383053003744] 
\draw  [draw opacity=0][fill={rgb, 255:red, 255; green, 219; blue, 219 }  ,fill opacity=1 ] (208.39,127.41) .. controls (212.55,114.69) and (214.08,111.99) .. (217.47,107.97) .. controls (212.79,139.05) and (214.32,163.28) .. (217.47,183.52) .. controls (211.73,173.16) and (210.79,171.28) .. (208.39,163.19) .. controls (205.49,150.46) and (205.73,139.75) .. (208.39,127.41) -- cycle ;
%Shape: Arc [id:dp3825105668345905] 
\draw  [draw opacity=0] (208.39,127.41) .. controls (216.27,97.92) and (243.17,76.2) .. (275.15,76.2) .. controls (313.31,76.2) and (344.25,107.14) .. (344.25,145.3) .. controls (344.25,183.46) and (313.31,214.4) .. (275.15,214.4) .. controls (243.17,214.4) and (216.27,192.68) .. (208.39,163.19) -- (275.15,145.3) -- cycle ; \draw   (208.39,127.41) .. controls (216.27,97.92) and (243.17,76.2) .. (275.15,76.2) .. controls (313.31,76.2) and (344.25,107.14) .. (344.25,145.3) .. controls (344.25,183.46) and (313.31,214.4) .. (275.15,214.4) .. controls (243.17,214.4) and (216.27,192.68) .. (208.39,163.19) ;  
%Shape: Boxed Bezier Curve [id:dp07631550758689243] 
\draw    (208.39,163.19) .. controls (204.75,143.78) and (193.75,143.95) .. (189.06,163.19) ;
%Shape: Polygon Curved [id:ds5328705283160771] 
\draw  [draw opacity=0][fill={rgb, 255:red, 255; green, 219; blue, 219 }  ,fill opacity=1 ] (180.06,107.29) .. controls (183.2,112.75) and (187.1,119.15) .. (189.06,127.41) .. controls (185.4,138.3) and (184.4,150.9) .. (189.06,163.19) .. controls (186.9,170.85) and (184.4,177.55) .. (180.06,182.84) .. controls (168.2,154.9) and (174.8,123.9) .. (180.06,107.29) -- cycle ;
%Curve Lines [id:da6272092142387433] 
\draw  [dash pattern={on 0.84pt off 2.51pt}]  (189.06,163.19) .. controls (184.7,152.46) and (185.33,137.84) .. (189.06,127.41) ;
%Shape: Boxed Bezier Curve [id:dp7968336208862614] 
\draw  [dash pattern={on 0.84pt off 2.51pt}]  (180.06,183.53) .. controls (170.85,160.87) and (172.17,129.99) .. (180.06,107.97) ;
%Shape: Arc [id:dp7035471049423274] 
\draw  [draw opacity=0] (189.06,163.19) .. controls (181.18,192.68) and (154.28,214.4) .. (122.3,214.4) .. controls (84.14,214.4) and (53.2,183.46) .. (53.2,145.3) .. controls (53.2,107.14) and (84.14,76.2) .. (122.3,76.2) .. controls (154.28,76.2) and (181.18,97.92) .. (189.06,127.41) -- (122.3,145.3) -- cycle ; \draw   (189.06,163.19) .. controls (181.18,192.68) and (154.28,214.4) .. (122.3,214.4) .. controls (84.14,214.4) and (53.2,183.46) .. (53.2,145.3) .. controls (53.2,107.14) and (84.14,76.2) .. (122.3,76.2) .. controls (154.28,76.2) and (181.18,97.92) .. (189.06,127.41) ;  
%Curve Lines [id:da17100481136031243] 
\draw    (189.06,127.41) .. controls (192.7,146.82) and (203.7,146.65) .. (208.39,127.41) ;
%Curve Lines [id:da4534268285185101] 
\draw  [dash pattern={on 0.84pt off 2.51pt}]  (208.39,127.41) .. controls (205.53,138.65) and (205.7,150.48) .. (208.39,163.19) ;
%Curve Lines [id:da474364537154349] 
\draw  [dash pattern={on 0.84pt off 2.51pt}]  (217.47,107.97) .. controls (213.53,130.65) and (213.53,157.65) .. (217.47,183.53) ;
\node at (53,144.58) [circle,fill,inner sep=0.7pt]{};

%Straight Lines [id:da45517838749654893] 
\draw  [dashed]  (182,159.83) -- (182,260.83) ;
%Straight Lines [id:da2406487380709672] 
\draw  [dashed]  (211.33,159.17) -- (211.33,226.17) ;

% Text Node
\draw (53,144.58) node [anchor=south east][inner sep=0.75pt]   [align=left] {$\vec \Phi_k( x_{k}^{i,1})\;$};
% Text Node
\draw (182,260.83) node [anchor=north west][inner sep=0.75pt]    {$\vec \Phi_k(B_{\alpha \rho^{i,2}_k}(x_{k}^{i,1})\setminus B_{\alpha^{-1} \rho^{i,1}_k}(x_{k}^{i,1}))$};
% Text Node
\draw (213.33,226.17) node [anchor=north west][inner sep=0.75pt]    {$\vec \Phi_k(B_{\alpha }(x_{k}^{i,1})\setminus B_{\alpha^{-1} \rho^{i,2}_k}(x_{k}^{i,1}))$};

\end{tikzpicture}
\caption{An example of the images of neck and bubble regions. The left sphere $\Phi_k(B_{\alpha^{-1} \rho^{i,1}_k}(x_{k}^{i,1}))$ and the catenoidal bridge $\Phi_k(B_{\alpha ^{-1} \rho^{i,2}_k}(x_{k}^{i,1})\setminus B_{\alpha \rho^{i,2}_k}(x_{k}^{i,1}))$ are the two resulting bubbles, connected by two neck regions (red) to the main immersion, which immerses the sphere on the right.}
\label{fig:bubble and neck regions}
\end{figure}
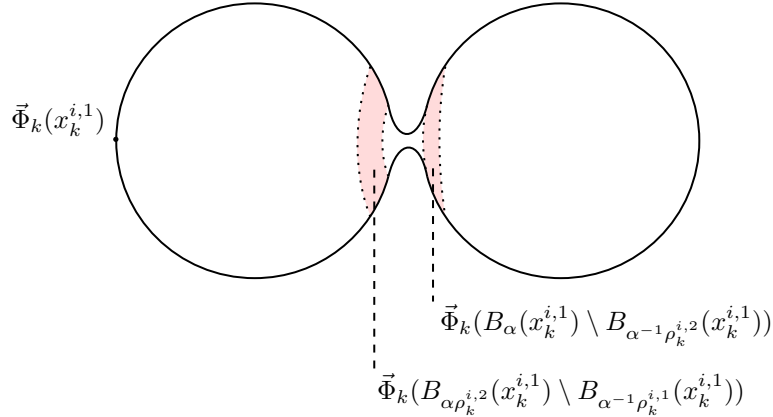

\subsection{Conservative system for constrained equation} \label{subsec:conservative system for constrained equation}
We will recall the conservative system corresponding to the constrained Willmore equation \eqref{constrained Willmore surface} as it was shown in \cite{BernardNoether}. We will again work in a conformal chart $\vec \Phi :\Omega\to \R^3$, where $\Omega$ is simply connected. Recall that $\vec T$ was defined in \eqref{definition T} as
\[\vec T =-\alpha \nabla \vec \Phi + \frac{\beta}{2} \vec \Phi \times \nabla ^\perp \vec \Phi- \frac{\gamma}{2} (\nabla \vec n + 2H \nabla \vec \Phi) .\]
We define $\vec X$ such that
\begin{equation}
\vec X = \frac{\beta}{4} |\vec \Phi|^2 \nabla ^\perp \vec \Phi,\label{definition X}
\end{equation}
and $Y$ being the solution of\footnote{The boundary conditions are specified for uniqueness of $Y$.}
\begin{equation}
\begin{cases}
\Delta Y =  e^{2\lambda} (-2\alpha  + \beta \langle \vec \Phi, \vec n\rangle-\gamma H) &\quad\text{in }\Omega,\\
\;\;\;Y = 0&\quad\text{on }\partial \Omega.
\end{cases}
\label{definition Y}
\end{equation}
By \cite{BernardNoether}, there exist quantities $\vec L, \vec R$ and $S$ satisfying
\begin{equation}
\begin{cases}
\nabla ^\perp \vec L &= \frac{1}{2} (\nabla \vec H - 3\pi_{\vec n}(\nabla \vec H) + \nabla ^\perp \vec n \times \vec H) -  \vec T,\\
\nabla ^\perp \vec R &= \vec L \times \nabla ^\perp \vec \Phi + \vec H \times \nabla \vec \Phi - \vec X,\\
\nabla ^\perp S &= \langle\vec L, \nabla ^\perp \vec \Phi\rangle - \nabla Y.
\end{cases}\label{definition L R and S}
\end{equation}
$S$ and $\vec R$ furthermore satisfy the equations
\begin{equation}
\begin{cases}

\nabla S &= - \langle \nabla ^\perp \vec R, \vec n \rangle + \nabla ^\perp Y,\\
\nabla \vec R &= \vec n \times \nabla ^\perp \vec R + (\nabla ^\perp S + \nabla Y )  \vec n.
\end{cases}\label{alternative system for S and R}
\end{equation}
From \eqref{alternative system for S and R} and \eqref{definition L R and S}, one can obtain
\begin{equation}
\begin{cases}
\Delta \vec R &=   \nabla ^\perp \vec n \cdot \nabla S + \nabla ^\perp \vec n \times \nabla \vec R + \Div \left [ \nabla Y \vec n \right ],\\
\Delta S &= \langle \nabla ^\perp \vec n , \nabla \vec R \rangle,\\
\Delta \vec \Phi &= -  \nabla ^\perp S\cdot \nabla \vec \Phi  - \nabla ^\perp \vec R \times \nabla \vec \Phi -  \nabla \vec \Phi \cdot \nabla Y  - \frac{\beta}{2}|\vec \Phi|^2 e^{2\lambda} \vec n.
\end{cases}
\label{conservative system}
\end{equation}
For the sake of completeness, we will rederive these conservation laws in \Cref{sec:Appendix Derivation Conservative System}.
\begin{remark}
The careful reader may notice that the equations above differ slightly from the ones obtained in \cite{BernardNoether}. The reasons are small mistakes in that paper, which we want to point out now for future reference:
\begin{enumerate}[label = $\bullet$]
\item In \cite[(2.15)]{BernardNoether} and the last line in the equation above that, the sign in front of the $\vec X$ term should be flipped.
\item In \cite[(3.13)]{BernardNoether}, the equation for $\Delta \vec R$ contains an additional term $\frac{\beta}{4} |\vec \Phi|^2 \nabla \vec \Phi$ which should not appear. The reason being that the choice $\nabla \vec X = \frac{\beta}{4} |\vec \Phi|^2 \nabla^\perp \vec \Phi$ right before \cite[(3.11)]{BernardNoether} is an abuse of notation as the right-hand side does not necessarily have non-vanishing curl (even though the right-hand side does have the correct divergence). This comes into play at \cite[(2.16)]{BernardNoether}, when the divergence of $\nabla^\perp \vec X$ is taken to be 0, whereas with our choice of $\nabla X$, this cannot be concluded. The resulting additional contribution cancels exactly with the remaining term involving $\vec X$.
\item In the first equation of \cite[(2.16)]{BernardNoether}, the $+$ in front of $|g|^{1/2}  \nabla_j ((\star \vec n) \cdot \nabla^j \vec X)$ should have been a minus. This however does not affect us as this term vanishes in our constrained system.
\item The differing signs in the equation for $\Delta \vec \Phi$ stem from several sign mistakes in the calculation between (2.16) and (2.17) in \cite{BernardNoether}.
%\item \textcolor{red}{The famous sign in front of the $\vec H \times \nabla \vec \Phi$ term, I still don't know where that mistake for that was.}
\end{enumerate}
\end{remark}
\subsection{\texorpdfstring{$L^p$}{Lp}-estimate for the conformal parameters}\label{subsec:Lp estimates conformal parameter}
We recall the following theorem from \cite{MichelatPointwiseExpansion}, slightly generalizing \cite[Lemma V.3]{BernardRiviere}:
\begin{lemma}[See {\cite[Theorem 2.1]{MichelatPointwiseExpansion}}]\label{lem: original pointwise lambda estimate}
There exists a constant $\eps_2>0$ with the following property. Let $0<2^6 r<R<\infty$ and $\vec \Phi$ be a conformal immersion from $\Omega \cqq B_R \setminus B_r $ into $\R^3$ with $\|\nabla \vec n_{\vec \Phi}\|_{L^2(\Omega)}$ bounded, satisfying
\[\|\nabla \vec n_{\vec \Phi}\|_{L^{2,\infty}(\Omega)}\leq \eps_2.\]
Then, there exist $d, A\in \R$ such that
\begin{equation}
\|\lambda(x) - d\log|x| - A\|_{L^{\infty}(B_{R/2} \setminus B_{2r} )}\leq C\left (\|\nabla \lambda\|_{L^{2,\infty}(\Omega)}+ \int_{\Omega}|\nabla \vec n _{\vec \Phi}|^2 \dif{x}\right ),\label{original pointwise lambda estimate}
\end{equation}
where for all $r\leq \rho \leq R$, we have
\begin{equation}
\left | 2\pi d - \int_{\partial B_\rho}   \parpar{r}{\lambda} \dif{l}\right | \leq C \left (\int_{\Omega}|\nabla \vec n_{\vec \Phi}|^2 \dif{x} + \frac{1}{\log (R/\rho)} \int_{\Omega}|\nabla \vec n_{\vec \Phi}|^2\dif{x}\right ).\label{d estimate}
\end{equation}
\end{lemma}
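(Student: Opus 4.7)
The strategy is to absorb the Jacobian structure of $\Delta \lambda$ into a bounded Wente correction, reducing the statement to a classical fact about harmonic functions on an annulus. Exactly as in the derivation of \eqref{Poisson equation for lambda}, one obtains $\vec e_1, \vec e_2 \in W^{1,2}(B_R)$ with $\|\nabla \vec e_i\|_{L^2(B_R)} \leq C\|\nabla \vec n_{\vec \Phi}\|_{L^2(\Omega)}$ such that $\Delta \lambda = \langle \nabla^\perp \vec e_1, \nabla \vec e_2\rangle$ on $\Omega$; a cutoff is used to extend the frames across the inner disc $B_r$ at a controlled cost. Let $w$ solve $\Delta w = \langle \nabla^\perp \vec e_1, \nabla \vec e_2\rangle$ on $B_R$ with zero boundary values. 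The $L^\infty$ and $L^{2,1}$ versions of Wente's inequality (\cite[Theorems 3.4.1 and 3.4.5]{Helein}) give
\[\|w\|_{L^\infty(B_R)} + \|\nabla w\|_{L^{2,1}(B_R)} \leq C \int_\Omega |\nabla \vec n_{\vec \Phi}|^2 \, dx.\]
Thus $h \coloneqq \lambda - w$ is harmonic on $\Omega$ and $\|\nabla h\|_{L^{2,\infty}(\Omega)} \leq \|\nabla \lambda\|_{L^{2,\infty}(\Omega)} + C \int_\Omega |\nabla \vec n_{\vec \Phi}|^2 \, dx$.

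Being harmonic on the annulus, $h$ admits a Laurent expansion
\[h(\rho e^{i\theta}) = A + d\log \rho + \sum_{n\in \Z\setminus \{0\}} \left(a_n\rho^{|n|} + b_n\rho^{-|n|}\right) e^{i n \theta}.\]
Take $A$ and $d$ as the two constants in the statement. The flux identity $\int_{\partial B_\rho}\partial_r h\, d\ell = 2\pi d$, combined with $\int_{\partial B_\rho}\partial_r \lambda\, d\ell = 2\pi d + \int_{B_\rho \setminus B_{\rho_0}}\Delta w\, dx + \int_{\partial B_{\rho_0}}\partial_r w\, d\ell$ for a base radius $\rho_0 \in [r, \rho]$, together with the Wente bounds on $w$, yields \eqref{d estimate} after averaging $\rho_0$ over a dyadic range (which is what produces the $(\log(R/\rho))^{-1}$ factor on the right-hand side).

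It then remains to prove the pointwise bound
\[\|h - A - d\log|x|\|_{L^\infty(B_{R/2}\setminus B_{2r})} \leq C\|\nabla h\|_{L^{2,\infty}(\Omega)}.\]
For each Fourier mode $n \neq 0$, the monomials $\rho^{\pm |n|}$ decay geometrically inside $B_{R/2}\setminus B_{2r}$ away from $\partial B_R$ and $\partial B_r$ respectively. Extracting each mode by integration in $\theta$ on dyadic subannuli, then using Plancherel together with the duality between $L^{2,1}$ and $L^{2,\infty}$ (pairing the gradient of the mode against $1/|z|$ in Lorentz norms, compare \eqref{L21 norm for gradient of log}--\eqref{L2infty norm for gradient of log}), yields pointwise control of each mode by $\|\nabla h\|_{L^{2,\infty}(\Omega)}$ with coefficients summable in $n$. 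The hard part will be this last step: extracting mode-by-mode $L^\infty$ bounds from a weak-$L^2$ quasi-norm uniformly in the aspect ratio $R/r$. The dyadic buffer provided by the hypothesis $2^6 r < R$ is precisely what ensures that the geometric decay of $\rho^{\pm|n|}$ dominates any logarithmic loss, yielding an estimate independent of the modulus of the annulus. Combining with the $L^\infty$ bound on $w$ then produces \eqref{original pointwise lambda estimate}.
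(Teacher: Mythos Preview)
The paper does not prove this lemma; it is quoted from \cite[Theorem 2.1]{MichelatPointwiseExpansion} (which in turn sharpens \cite[Lemma V.3]{BernardRiviere}), and no argument is given in the present paper. Your outline is exactly the strategy used in those references: write $\Delta\lambda$ as a Jacobian, peel off a Wente solution $w$ with $\|w\|_{L^\infty}+\|\nabla w\|_{L^{2,1}}\le C\int|\nabla\vec n|^2$, and then analyse the harmonic remainder $h=\lambda-w$ on the annulus via its Laurent expansion, bounding the non-logarithmic modes uniformly in the conformal modulus from the $L^{2,\infty}$ norm of $\nabla h$.

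Two minor points. First, your derivation of \eqref{d estimate} is more convoluted than necessary: since $h$ is harmonic one has $\int_{\partial B_\rho}\partial_r\lambda\,\mathrm d\ell=2\pi d+\int_{\partial B_\rho}\partial_r w\,\mathrm d\ell$, and the last term equals $\int_{B_\rho}\Delta w\,\mathrm dx$, which is controlled in absolute value by $\|\nabla\vec e_1\|_{L^2}\|\nabla\vec e_2\|_{L^2}\le C\int|\nabla\vec n|^2$ directly. The $(\log(R/\rho))^{-1}$ contribution in the stated bound is not needed for this particular estimate and in any case is dominated by the first term; it plays no role in the paper's applications (cf.\ \Cref{rem:mean value argument}). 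Second, the extension of the frame across $B_r$ via cutoff is indeed the standard device, but one should note that after cutoff the pair $(\vec e_1,\vec e_2)$ need no longer be an orthonormal frame; what matters is only that $\Delta\lambda=\langle\nabla^\perp\vec e_1,\nabla\vec e_2\rangle$ on $\Omega$ and that the $L^2$ gradients on $B_R$ are controlled, both of which survive the cutoff.
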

\begin{remark} \label{rem:mean value argument}
By the mean value theorem, we can choose $\rho \in (r,2r)$ such that
\begin{equation}
\left | \int_{\partial B_{\rho}} \parpar{r}{\lambda} \dif{l}\right| \leq \frac{1}{r} \int_{B_{2r} \setminus B_{r} } |\nabla \lambda| \dif{x} \leq C \|\nabla \lambda \|_{L^{2,\infty}(\Omega)}.\label{mean value argument for d}
\end{equation}
\end{remark}
For the next lemma, we need to evaluate the following limit:
\begin{prop}
It holds that
\begin{equation}
\lim _{x\searrow 0, y\to 0} \frac{1-x^y}{y} = \infty.\label{1-x^y over y limit}
\end{equation}
\end{prop}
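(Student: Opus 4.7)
The plan is to rewrite the quotient in a form that separates the roles of $x$ and $y$. Setting $t \cqq y \log x$, one has the identity
\[\frac{1 - x^y}{y} \;=\; \frac{1 - e^{t}}{y} \;=\; -\log(x)\cdot \phi(t), \qquad \phi(s)\cqq \frac{e^{s} - 1}{s}, \quad \phi(0)\cqq 1.\]
Since $e^{s} - 1$ and $s$ share the same sign, $\phi$ is strictly positive on all of $\R$, and both factors on the right are positive for $x \in (0,1)$ and $y \neq 0$. The difficulty is that $\phi(s)\to 0$ as $s\to -\infty$, so if $y\to 0$ and $x\to 0^{+}$ at very different rates there is a competition between the divergence of $-\log x$ and the possible decay of $\phi(y\log x)$; controlling this competition is the only real obstacle.

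The key technical step is the uniform lower bound
\[\phi(s) \;\geq\; \frac{1}{1+|s|} \qquad \text{for all } s\in \R.\]
For $s\geq 0$ this is immediate from the convexity inequality $e^{s}\geq 1+s$, which already yields $\phi(s)\geq 1$. For $s<0$, writing $u\cqq -s>0$, the desired inequality becomes $(1-e^{-u})(1+u)\geq u$, which after rearrangement is equivalent to $e^{u}\geq 1+u$ once more. Thus a single elementary inequality handles both signs simultaneously.

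Combining the identity with this lower bound gives
\[\frac{1-x^{y}}{y} \;\geq\; \frac{-\log x}{1+|y|\cdot(-\log x)} \;=\; \frac{1}{\dfrac{1}{-\log x} + |y|}.\]
Since both $\frac{1}{-\log x}$ and $|y|$ tend to $0$ as $(x,y)\to (0^{+},0)$, the denominator on the right tends to $0$ and the whole expression diverges to $+\infty$, which establishes the claim. No further work is required; the entire argument reduces to the elementary inequality $e^{u} \geq 1+u$ together with the substitution $t = y\log x$.
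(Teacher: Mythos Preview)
Your proof is correct. Both your argument and the paper's begin with the same rewriting $\frac{1-x^y}{y} = (-\log x)\cdot\phi(y\log x)$ with $\phi(s) = \frac{e^s-1}{s}$, but they diverge after that. The paper argues by sequences: given $(x_k,y_k)\to(0^+,0)$ it passes to a subsequence along which $t_k\cqq y_k\log x_k$ converges in $\R\cup\{\pm\infty\}$, and then handles the cases $t_k\to -\infty$ and $t_k$ bounded separately. You instead prove the uniform pointwise bound $\phi(s)\geq \frac{1}{1+|s|}$, reducing everything to the single inequality $e^u\geq 1+u$, and obtain directly the quantitative lower bound
\[\frac{1-x^y}{y}\;\geq\;\frac{1}{\dfrac{1}{-\log x}+|y|}.\]
This avoids both the subsequence extraction and the case split, and it yields an explicit rate of divergence that the paper's approach does not; on the other hand, the paper's approach requires no auxiliary inequality beyond continuity of $\phi$ on $\R$. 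Either argument suffices, but yours is the cleaner of the two.
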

\begin{proof}
We rewrite
\[f(x,y)\cqq \frac{1-x^y}{y} = \frac{1-e^{\ln(x) y}}{\ln(x) y}\ln(x).\]
Let $\lim_{k\to \infty}(x_k,y_k)=(0,0)$ with $x_k>0$. Up to subsequence, $\ln(x_k)y_k \to C \in \R \cup \{\pm \infty\}$. If $C= -\infty$ then $y_k>0$ for large $n$ and also $1-e^{\ln(x_k)y_k} \geq 1/2$ for large $n$ and so $f(x_k,y_k)\geq \frac{1/2}{y_k}\to \infty$. Otherwise $\frac{1-e^{\ln(x_k)y_k}}{\ln(x_k)y_k} \to \frac{1-e^C}{C}<0$ (continuously extended through $C=0$) and so $f(x_k,y_k)\to \infty$. By the usual subsequence argument, this yields the assertion.
\end{proof}

In \cite[Section 3.1]{MichelatMondino}, it was shown that the conformal parameters $e^{\lambda_k}$ in neck regions of converging Willmore spheres satisfy an uniform $L^p$-estimate for some $p>2$ (notice that $p=2$ is equivalent to bounded area). We prove a similar result and extend it by dropping the assumption that $\|\lambda_k\|_{L^{\infty}(K)}< \infty$ for $K\subset \subset B_{1}\setminus \{0\}$, which is crucial for the bubbles that are not directly attached to a bubble or immersion carrying area in the limit. 
%(\textcolor{red}{This should already have been done in \cite{MichelatMondino}?!?}).
\begin{lemma}[$L^p$ bounds for conformal parameters in converging neck regions] \label{lem:Lp lemma}
Suppose that $r_k\searrow 0$ as $k\to \infty$. Furthermore, suppose $\vec \Phi_k :B_{1}\setminus B_{r_k}\to \R^3$ are immersions such that
\begin{equation}
\|\nabla \vec n_{k}\|_{L^{2,\infty}(B_{1}\setminus B_{r_k})}\leq \eps_2, \label{Lp lemma: L2infty estimate for gradient n}
\end{equation}
where $\eps_2$ is the constant from \Cref{lem: original pointwise lambda estimate}. Assume further that
\begin{equation}
\mc A(\vec \Phi_k) + \|\nabla \lambda_k\|_{L^{2,\infty}(B_{1}\setminus B_{r_k})} + \|\nabla \vec n_k\|_{L^2(B_{1}\setminus B_{r_k})}\leq \Lambda \label{Lp lemma: L2,infty bound nabla lambda and L2 estimate gradient n}
\end{equation}
for some fixed $\Lambda >0$. Denote by $d_k$ the constant obtained from \Cref{lem: original pointwise lambda estimate} applied in $B_{1}\setminus B_{r_k}$. Then up to a subsequence, there exists $p=p(\Lambda)>2$ and $\delta = \delta(\Lambda)>0$ depending only on $\Lambda$ such that either
\begin{equation}
\sup_{k\to \infty}\|e^{\lambda_k}\|_{L^p(B_{1/2}\setminus B_{2r_k})}<C(\Lambda)\quad\text{and}\quad \liminf _{k\to \infty} d_k>-1+\delta,\label{Lp lemma:Lp estimate}
\end{equation}	
or \eqref{Lp lemma:Lp estimate} holds for the reflected immersion $\hat{\vec{\Phi}}_k:B_1\setminus B_{r_k}\to \R^3$, $\hat{\vec{\Phi}}_k(z) = \vec{\Phi}_k\left ( \frac{r_k}{z}\right )$ instead.
\end{lemma}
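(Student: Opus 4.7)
I apply \Cref{lem: original pointwise lambda estimate} to each $\vec\Phi_k$ on the annulus $B_1 \setminus B_{r_k}$ (valid for $k$ large since $r_k \searrow 0$). The smallness hypothesis $\|\nabla \vec n_k\|_{L^{2,\infty}} \leq \eps_2$ together with \eqref{Lp lemma: L2,infty bound nabla lambda and L2 estimate gradient n} produces constants $d_k, A_k \in \R$ with
\begin{equation*}
\|\lambda_k - d_k \log|\cdot| - A_k\|_{L^\infty(B_{1/2}\setminus B_{2r_k})} \leq C(\Lambda).
\end{equation*}
Combining \eqref{d estimate} with the mean-value argument of \Cref{rem:mean value argument} yields the uniform bound $|d_k| \leq C(\Lambda)$. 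Extract a subsequence so that $d_k \to d_\infty$.

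I then exploit the reflection symmetry: for $\hat{\vec\Phi}_k(z) \cqq \vec\Phi_k(r_k/z)$, a direct computation using that $z \mapsto r_k/z$ is a conformal (Möbius) map gives
\begin{equation*}
\hat\lambda_k(z) = \lambda_k(r_k/z) + \log r_k - 2\log|z| = (-d_k-2)\log|z| + \bigl(A_k + (d_k+1)\log r_k\bigr) + \hat R_k(z),
\end{equation*}
with $\|\hat R_k\|_{L^\infty(B_{1/2}\setminus B_{2r_k})} \leq C(\Lambda)$. Hence $\hat{\vec\Phi}_k$ inherits the decomposition of \Cref{lem: original pointwise lambda estimate} with $\hat d_k = -d_k - 2$ and $\hat A_k = A_k + (d_k+1)\log r_k$, and its area coincides with $\mc A(\vec\Phi_k)$ by reparametrization invariance. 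Thus, after replacing $\vec\Phi_k$ by $\hat{\vec\Phi}_k$ if $d_\infty < -1$, I may assume $d_\infty \geq -1$.

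In the main case $d_\infty > -1$, set $\delta \cqq \min\{\Lambda^{-1}, (d_\infty+1)/2\}$ and pick $p \in (2, 2/(1-\delta))$, both depending only on $\Lambda$. For $k$ large, $d_k > -1 + \delta$, so $pd_k > -2$. The area hypothesis and the pointwise lower bound $\lambda_k \geq d_k \log|x| + A_k - C$ give
\begin{equation*}
\Lambda \geq c\,e^{2A_k} \int_{B_1 \setminus B_{r_k}} |x|^{2d_k} \dif x \geq c(\Lambda, \delta)\, e^{2A_k},
\end{equation*}
where the last inequality uses that $d_k+1 \geq \delta$ (and $|d_k| \leq C(\Lambda)$) bounds the integral uniformly from below by a positive constant. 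This gives $e^{A_k} \leq C(\Lambda,\delta)$, from which the pointwise upper estimate yields
\begin{equation*}
\int_{B_{1/2}\setminus B_{2r_k}} e^{p\lambda_k} \dif x \leq C\, e^{pA_k} \int_{B_{1/2}} |x|^{pd_k} \dif x \leq C(\Lambda, \delta, p),
\end{equation*}
the integral being finite uniformly in $k$ because $pd_k > -2$. This is \eqref{Lp lemma:Lp estimate}.

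The main obstacle is the critical case $d_\infty = -1$, in which both the original and the reflected sequence satisfy $d_k \to -1$ and $\hat d_k \to -1$. Writing $d_k = -1 + \eps_k$ with $\eps_k \to 0$, the area bound combined with \eqref{1-x^y over y limit} (applied to $x = r_k$ and $y = 2\eps_k$) forces $e^{2A_k} \to 0$, but a direct computation shows that this decay is too slow to keep $e^{pA_k}\int_{B_{1/2}\setminus B_{2r_k}} |x|^{pd_k}\dif x$ bounded for any fixed $p > 2$, so the strategy of the previous paragraph breaks down for every fixed $p$. The resolution must therefore consist in excluding $d_\infty = -1$ using the remaining hypotheses: since $\lambda_k \approx -\log|x|$ forces an approximately cylindrical geometry near the origin, whose Gauss map has $\|\nabla \vec n\|_{L^{2,\infty}}$ bounded below by a universal constant, choosing $\eps_2$ small enough (as in \Cref{lem: original pointwise lambda estimate}) precludes this regime. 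Once excluded, the subsequence can be extracted so that $|d_k + 1| \geq \delta(\Lambda) > 0$, and the previous case applies to either $\vec\Phi_k$ or $\hat{\vec\Phi}_k$.
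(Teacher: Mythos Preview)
Your setup (applying \Cref{lem: original pointwise lambda estimate}, bounding $|d_k|$, extracting a convergent subsequence, using the reflection to swap $d_k$ with $-d_k-2$, and handling $d_\infty > -1$ via the area bound) is correct and matches the paper. The genuine gap is your treatment of the critical case $d_\infty = -1$.

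First, $\eps_2$ is not a free parameter you may shrink: it is the fixed constant handed to you by \Cref{lem: original pointwise lambda estimate}, and the statement of the present lemma takes it as given. Second, and more seriously, your heuristic that $\lambda_k \approx -\log|x|$ forces a universal lower bound on $\|\nabla\vec n_k\|_{L^{2,\infty}}$ is not justified. The condition $d_\infty = -1$ only says that the \emph{intrinsic} metric is asymptotically that of a long flat cylinder; it does not pin down the extrinsic geometry enough to force $\|\nabla\vec n_k\|_{L^{2,\infty}}$ large. Even for a round cylinder the bound you would get is of order~$1$, not contradicting the fixed $\eps_2$; and for more general immersions with this conformal factor there is no such bound at all without further argument. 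In any case you do not provide one.

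The paper excludes $|d+1|\leq\delta$ by a completely different mechanism: Miura's diameter inequality, which bounds the intrinsic diameter of an immersed surface with boundary by $C\bigl(\int_\Sigma|H|\,d\mu + \text{length}(\partial\Sigma)\bigr)$. From the pointwise estimate on $\lambda_k$ one reads off a lower bound for the intrinsic distance between the two boundary circles of order $e^{A_k}(1-(4r_k)^{d_k+1})/(1+d_k)$, an upper bound for the area of order $e^{2A_k}(1-(4r_k)^{2d_k+2})/(1+d_k)$, and upper bounds for the two boundary lengths. Combining these with Miura's inequality and Cauchy--Schwarz (to estimate $\int|H|e^{2\lambda_k}$ by $\sqrt{\Lambda\cdot\text{area}}$) yields an inequality of the form
\[
\frac{1-(4r_k)^{d_k+1}}{1+d_k}\leq C(\Lambda)\left(\sqrt{\frac{1-(4r_k)^{2d_k+2}}{1+d_k}}+1+r_k^{d_k+1}\right),
\]
which contradicts the elementary limit \eqref{1-x^y over y limit} once $\delta$ is chosen small depending only on $C(\Lambda)$. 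This is the missing ingredient; note that it also produces a $\delta=\delta(\Lambda)$ independent of the particular subsequential limit $d_\infty$, whereas your choice $\delta=\min\{\Lambda^{-1},(d_\infty+1)/2\}$ does not.
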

\begin{proof}
Using \eqref{Lp lemma: L2infty estimate for gradient n} and \eqref{Lp lemma: L2,infty bound nabla lambda and L2 estimate gradient n}, we can apply \Cref{lem: original pointwise lambda estimate} to deduce that there exist $d_k\in \R$ and $A_k\in \R$ satisfying
\begin{equation}
\|\lambda_k(x) - d_k \log |x| -A_k\|_{L^{\infty}(B_{1/2}\setminus B_{2{r_k}})} \leq C(\Lambda).\label{lambda estimate on annulus}
\end{equation}
By \eqref{d estimate} and \eqref{mean value argument for d}, it holds
\[|d_k| \leq \left |d_k - \frac{1}{2\pi} \int_{\partial B_{{\rho_k}}} \parpar{r}{\lambda_k} \dif{l}  \right |+\left |\frac{1}{2\pi} \int_{\partial B_{{\rho_k}}} \parpar{r}{\lambda_k} \dif{l} \right | \leq C(\Lambda).\]
Thus, $d_k$ remains bounded and we may choose a subsequence such that $\lim_{k\to \infty} d_k = d\in \R$. 
\medbreak
\textbf{Case 1: $|d+1|\leq\delta$.} We will choose $\delta = \delta(\Lambda)\in (0,1/2)$ sufficiently small later and bring this case to a contradiction. First of all, notice that we can estimate the area by
\begin{equation}
\mc A(\vec \Phi_k\vert _{B_{1/2}\setminus B_{2{r_k}}}) \leq C e^{C(\Lambda)} \int_{2{r_k}}^{1/2} e^{2A_k} r ^{2d_k+1} \dif{r} = C e^{C(\Lambda)} e^{2A_k}\frac{1-(4{r_k})^{2d_k+2}}{d_k+1}. \label{a priori area estimate}
\end{equation}
Furthermore, we denote by $d_{\text{int}}$ the intrinsic diameter of $(B_{1/2}\setminus B_{2{r_k}}, g_{\vec \Phi_k})$. This is bounded from below by the intrinsic distance between the two boundary components $\partial B_{1/2}$ and $\partial B_{2{r_k}}$. Using \eqref{lambda estimate on annulus}, we estimate this by
\begin{equation}
d_{\text{int}} \geq e^{-C(\Lambda)} \int_{2{r_k}}^{1/2} e^{A_k} r^{d_k} \dif{r} =C e^{-C(\Lambda)} e^{A_k}\frac{1 - (4{r_k})^{d_k+1}}{1+d_k},\label{a priori diameter estimate}
\end{equation}
extended continuously to $d_k=-1$. Furthermore, we estimate the length of $\partial B_{1/2}$ and $\partial B_{2{r_k}}$ by
\begin{equation}
\int_{\partial B_{1/2}} e^{\lambda_k} \dif{l}\leq C e^{C(\Lambda)} e^{A_k} \quad\text{and}\quad\int_{\partial B_{2r_k}} e^{\lambda_k} \dif{l} \leq C e^{C(\Lambda)} e^{A_k} {r_k}^{d_k+1}.\label{a priori length estimate on boundary}
\end{equation}
We apply Miura's inequality \cite[Theorem 1.1]{MiuraDiameterBound} to $\vec \Phi \vert _{B_{1/2}\setminus B_{2{r_k}}}$, using \eqref{a priori diameter estimate} and \eqref{a priori length estimate on boundary}:
\[e^{A_k}\frac{1 - (4{r_k})^{d_k+1}}{1+d_k} \leq C(\Lambda)\left (\int_{B_{1/2}\setminus B_{2{r_k}}} |H_{\vec \Phi_k}| e^{2\lambda_k}\dif{x} + e^{A_k}+ e^{A_k}{r_k}^{d_k+1}\right ).\]
Using Cauchy-Schwarz and the uniform bound on $\|\nabla \vec n_{\vec \Phi_k}\|_{L^2}$ from \eqref{Lp lemma: L2,infty bound nabla lambda and L2 estimate gradient n}, we can bound the integral by $C(\Lambda) \sqrt{\mc A(\vec \Phi_k\vert _{B_{1/2}\setminus B_{2{r_k}}}) } $ and together with \eqref{a priori area estimate}, we obtain
\begin{equation}
\frac{1 - (4{r_k})^{d_k+1}}{1+d_k} \leq C(\Lambda)\left (\sqrt{\frac{1-(4{r_k})^{2d_k+2}}{d_k+1}} + 1+ {r_k}^{d_k+1}\right ).\label{inequality for contradiction}
\end{equation}
Suppose first that after taking a subsequence, $d_k>-1$ for all $k$. By \eqref{1-x^y over y limit} and the fact that $1+r_k^{d_k+1}$ remains bounded, there is $\delta = \delta(\Lambda)>0$ such that if $|d+1|<\delta$, then for sufficiently large $k$, the right-hand side is bounded from above by
\[C(\Lambda) \sqrt{\frac{1-(4{r_k})^{2d_k+2}}{d_k+1}}  + \frac{1-(4r_k)^{d_k+1}}{2(1+d_k)} \]
We obtain
\[\frac{1 - (4{r_k})^{d_k+1}}{1+d_k} \leq C(\Lambda) \sqrt{\frac{1-(4{r_k})^{2d_k+2}}{d_k+1}} .\]
It follows
\[\frac{1 - (4{r_k})^{d_k+1}}{1+d_k} \leq C(\Lambda)( 1+(4{r_k})^{d_k+1}) \leq C(\Lambda).\]
Possibly choosing $\delta$ smaller, this contradicts \eqref{1-x^y over y limit} for sufficiently large $k$. If instead $d_k<-1$ for some subsequence, we multiply \eqref{inequality for contradiction} by $(4{r_k})^{-(d_k+1)}$ to arrive at
\[\frac{1-(4{r_k})^{-(d_k+1)} }{-(1+d_k)} \leq C(\Lambda)\left (\sqrt{\frac{1-(4{r_k})^{-2(d_k+1)}}{-(d_k+1)}} + (4{r_k})^{-(d_k+1)}+1\right ).\]
Now the contradiction follows as for the $d_k>-1$ case. We conclude that $|d+1| > \delta$ and so $|d_k+1| > \delta/2$ for sufficiently large $k$.
 \medbreak
\textbf{Case 2: $d>-1+\delta$.} It holds that
\begin{equation}
\Lambda \geq \mc A(\Phi_k\vert _{B_{1/2}\setminus B_{2{r_k}}}) \geq 2\pi e^{-C(\Lambda)} \int_{2{r_k}}^{1/2} r^{2d_k+1} e^{2A_k} dr \geq C(\Lambda) \frac{e^{2A_k}}{1+d_k}((1/2)^{2d_k+2} - {(2r_k)}^{2d_k+2}).\label{Ak bound using bounded area}
\end{equation}
The right-hand side only remains uniformly bounded if
\begin{equation}
\sup_k A_k\leq C(\Lambda).\label{Ak remain bounded}
\end{equation}
 For $k$ large enough, $d_k>-1+\delta/2$. Hence, choosing $p= \frac{4-\delta}{2-\delta}>2$ and using \eqref{Ak remain bounded},
\begin{align*}
\int_{B_{{1}/2}\setminus B_{2{r_k}}} e^{p\lambda_k} \dif{x} &\leq e^{C(\Lambda)} e^{pA_k} \int_{2{r_k}}^{1/2} r^{p d_k+1} \dif{r} \\
&\leq  C(\Lambda)\frac{1}{pd_k+2} \left ((1/2)^{pd_k + 2} - (2{r_k})^{pd_k+2}\right ) \\
&\leq C(\Lambda).
\end{align*}
Here we used that $pd_k + 2 > \delta/2 > 0$.
\medbreak
\textbf{Case 3: $d<-1-\delta$.} Similar to \eqref{Ak bound using bounded area}, we deduce that 
\begin{equation}
\sup_k A_k + (d_k+1)\ln( r_k) \leq C(\Lambda).\label{Ak bound in inverted setting}
\end{equation}
We consider the inversion of the domain
\[\hat{\vec{\Phi}}_k:B_1 \setminus B_{{r_k}}\to \R^3,\quad \tilde{\vec{\Phi}}_k(z)= \vec \Phi_k\left (\frac{r_k}{z}\right ).\]
It follows that
\begin{equation}
\hat{\lambda}_k(z) = \ln({r_k}) - 2\ln |z| + \lambda_k\left (\frac{r_k}{z}\right ). \label{reflected immersion conformal factor}
\end{equation}
From this and \eqref{lambda estimate on annulus}, we derive
\[\|\hat{\lambda}_k(x) -(-(2+d_k) \ln |x|) -( A_k + (d_k+1)\ln({r_k}) )\|_{L^{\infty}(B_{1/2}\setminus B_{2{r_k}})} \leq C(\Lambda). \]
We have essentially replaced $d_k$ by $-(2+d_k)$ converging to $-(2+d)>-1 + \delta$, which allows us to apply Case 2. 
\end{proof}
\begin{remark}\label{rem:no area in neck regions}
This lemma implies that neck regions do not contain area and diameter in the limit. Namely, if \eqref{Lp lemma:Lp estimate} holds, then Hölders inequality implies
\begin{equation}
\int_{B_{\alpha}\setminus B_{r_k/\alpha}} e^{2\lambda_k} \dif{x} \leq C \alpha ^{1-2/p} \to 0\label{no area property in neck regions}
\end{equation}
as $\alpha \to 0$, where the constant does not depend on $k$. The intrinsic diameter $d_{\text{int}}$ can be estimated from above by 
\begin{equation}
d_{\text{int}} \leq C e^{C(\Lambda)}\int_{r_k/\alpha}^{\alpha} e^{A_k} r^{d_k}\dif{r} \leq C \int_{B_{\alpha}\setminus B_{r_k/\alpha}} \frac{e^{\lambda_k}}{|x|}\dif{x} \leq C \alpha ^{1-2/p} \to 0.\label{no diameter property in neck regions}
\end{equation}
The estimates for reflected immersion are the same. Finally, notice that the contradiction for $d=-1$ did not need the assumption that the area is uniformly bounded.
\end{remark}

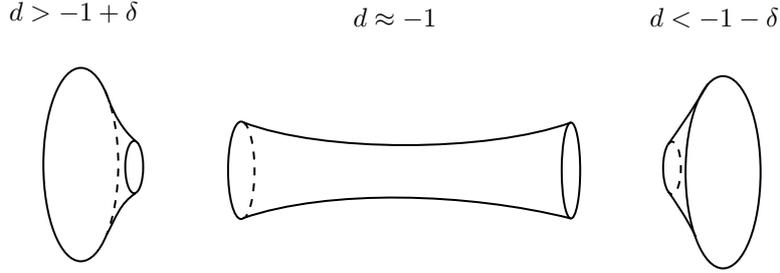
\begin{figure}[H] 
\centering

\tikzset{every picture/.style={line width=0.75pt}} %set default line width to 0.75pt        

\begin{tikzpicture}[x=0.75pt,y=0.75pt,yscale=-1,xscale=1]
%uncomment if require: \path (0,300); %set diagram left start at 0, and has height of 300

%Curve Lines [id:da8531069287409971] 
\draw    (245,119.71) .. controls (284.57,135.07) and (366.61,134.96) .. (408.58,119.84) ;
%Curve Lines [id:da9106325223999416] 
\draw    (408.58,168.1) .. controls (357.62,155.19) and (287.71,153.07) .. (245,167.91) ;
%Shape: Ellipse [id:dp023228670718313138] 
\draw   (403.97,143.97) .. controls (403.97,130.64) and (406.03,119.84) .. (408.58,119.84) .. controls (411.13,119.84) and (413.2,130.64) .. (413.2,143.97) .. controls (413.2,157.3) and (411.13,168.1) .. (408.58,168.1) .. controls (406.03,168.1) and (403.97,157.3) .. (403.97,143.97) -- cycle ;
%Shape: Arc [id:dp47239167430184414] 
\draw  [draw opacity=0] (245.67,167.47) .. controls (245.12,168.06) and (244.53,168.37) .. (243.93,168.37) .. controls (240.34,168.37) and (237.43,157.4) .. (237.43,143.88) .. controls (237.43,130.35) and (240.34,119.38) .. (243.93,119.38) .. controls (244.3,119.38) and (244.66,119.5) .. (245.01,119.72) -- (243.93,143.88) -- cycle ; \draw   (245.67,167.47) .. controls (245.12,168.06) and (244.53,168.37) .. (243.93,168.37) .. controls (240.34,168.37) and (237.43,157.4) .. (237.43,143.88) .. controls (237.43,130.35) and (240.34,119.38) .. (243.93,119.38) .. controls (244.3,119.38) and (244.66,119.5) .. (245.01,119.72) ;  
%Shape: Arc [id:dp3589528980529735] 
\draw  [draw opacity=0][dashed] (245,119.71) .. controls (248.19,121.28) and (250.65,131.56) .. (250.65,144) .. controls (250.65,154.84) and (248.78,164.04) .. (246.19,167.26) -- (244.15,144) -- cycle ; \draw  [dashed] (245,119.71) .. controls (248.19,121.28) and (250.65,131.56) .. (250.65,144) .. controls (250.65,154.84) and (248.78,164.04) .. (246.19,167.26) ;  

%Shape: Ellipse [id:dp7407270223637292] 
\draw   (186.1,142.25) .. controls (186.1,134.98) and (188.11,129.08) .. (190.58,129.08) .. controls (193.06,129.08) and (195.06,134.98) .. (195.06,142.25) .. controls (195.06,149.51) and (193.06,155.41) .. (190.58,155.41) .. controls (188.11,155.41) and (186.1,149.51) .. (186.1,142.25) -- cycle ;
%Shape: Arc [id:dp895237589076938]
 \draw [draw opacity=0] (176.79,176.31) .. controls (173.44,184.57) and (168.92,189.63) .. (163.94,189.63) .. controls (153.61,189.63) and (145.25,167.86) .. (145.25,140.99) .. controls (145.25,114.13) and (153.61,92.35) .. (163.94,92.35) .. controls (169.63,92.35) and (174.72,98.97) .. (178.15,109.41) -- (163.94,140.99) -- cycle ; \draw (176.79,176.31) .. controls (173.44,184.57) and (168.92,189.63) .. (163.94,189.63) .. controls (153.61,189.63) and (145.25,167.86) .. (145.25,140.99) .. controls (145.25,114.13) and (153.61,92.35) .. (163.94,92.35) .. controls (169.63,92.35) and (174.72,98.97) .. (178.15,109.41) ; 
%Curve Lines [id:da8478206999223541] 
\draw    (178,108.82) .. controls (180.35,115.41) and (185.06,123.97) .. (191.41,129.29) ;
%Curve Lines [id:da48383215441026395] 
\draw    (176.79,176.31) .. controls (181.29,168) and (183.41,160.47) .. (191.29,155.41) ;
%Shape: Arc [id:dp9458445687562598] 
\draw  [draw opacity=0][dashed] (176.01,103.85) .. controls (180.06,112.77) and (182.63,126.1) .. (182.63,140.99) .. controls (182.63,154.9) and (180.38,167.44) .. (176.79,176.31) -- (163.94,140.99) -- cycle ; \draw  [dashed] (176.01,103.85) .. controls (180.06,112.77) and (182.63,126.1) .. (182.63,140.99) .. controls (182.63,154.9) and (180.38,167.44) .. (176.79,176.31) ;  
%Shape: Ellipse [id:dp8725637416781185] 
\draw   (465.8,144.85) .. controls (465.8,118.15) and (474.18,96.5) .. (484.51,96.5) .. controls (494.85,96.5) and (503.23,118.15) .. (503.23,144.85) .. controls (503.23,171.55) and (494.85,193.2) .. (484.51,193.2) .. controls (474.18,193.2) and (465.8,171.55) .. (465.8,144.85) -- cycle ;
%Shape: Arc [id:dp6494596137621869] 
\draw  [draw opacity=0] (456.98,154.31) .. controls (455.56,152.11) and (454.6,147.71) .. (454.6,142.65) .. controls (454.6,137.58) and (455.56,133.18) .. (456.98,130.98) -- (459.03,142.65) -- cycle ; \draw   (456.98,154.31) .. controls (455.56,152.11) and (454.6,147.71) .. (454.6,142.65) .. controls (454.6,137.58) and (455.56,133.18) .. (456.98,130.98) ;  
%Curve Lines [id:da4212435629693936] 
\draw    (456.98,130.98) .. controls (461.2,124.9) and (468.6,114.7) .. (477,100.3) ;
%Curve Lines [id:da1844927724779668] 
\draw    (456.98,154.31) .. controls (459.78,158.35) and (465.8,166.7) .. (471,177.5) ;
%Shape: Arc [id:dp7202533047684674] 
\draw  [draw opacity=0][dashed] (456.98,130.98) .. controls (457.59,130.03) and (458.29,129.48) .. (459.03,129.48) .. controls (461.48,129.48) and (463.46,135.38) .. (463.46,142.65) .. controls (463.46,149.91) and (461.48,155.81) .. (459.03,155.81) .. controls (458.29,155.81) and (457.59,155.27) .. (456.98,154.31) -- (459.03,142.65) -- cycle ; \draw  [dashed] (456.98,130.98) .. controls (457.59,130.03) and (458.29,129.48) .. (459.03,129.48) .. controls (461.48,129.48) and (463.46,135.38) .. (463.46,142.65) .. controls (463.46,149.91) and (461.48,155.81) .. (459.03,155.81) .. controls (458.29,155.81) and (457.59,155.27) .. (456.98,154.31) ;  

% Text Node
\draw (446.37,61) node [anchor=north west][inner sep=0.75pt]    {$d< -1-\delta $};
% Text Node
\draw (298.37,61) node [anchor=north west][inner sep=0.75pt]    {$d\approx -1$};
% Text Node
\draw (126.67,58) node [anchor=north west][inner sep=0.75pt]    {$d >-1+\delta $};

\end{tikzpicture}
\caption{The three different cases belong to three different types of neck regions. The case $d>-1+\delta$ corresponds to the case when the inner end of the annulus carries no area compared to the outer end. The case $d<-1-\delta$ is the reversed situation, i.e., the outer end of the annulus carries no area compared to the inner end. The case $d\approx -1$ occurs when the area is equidistributed throughout the annulus.}
\end{figure}

\begin{remark} \label{rem:conservative system for reflected immersion}

On first sight, the introduction of the reflected immmersion $\hat{\vec{\Phi}}_k$ could be problematic. The fact that the constrained Willmore equation \eqref{constrained Willmore surface} holds on the simply connected domain $B_1 $ in which $\vec \Phi_k$ is defined allows us to use the conservative system from \Cref{subsec:conservative system for constrained equation} without the introduction of residues, see e.g. \cite{LaurainRiviereEnergyQuantization} for such a problem when working with sequences whose conformal class is not bounded in the moduli space. However, the reflected immersion $\hat{\vec{\Phi}}_k(z) \cqq \vec \Phi_k(r_k/z)$ is defined only on $\C \setminus B_{r_k} $ which is not simply connected. 

Nevertheless, the conservative system on $B_1 $ directly translates to the conservative system on $\C\setminus B_{r_k}$ as the map $z\mapsto r_k/z$ is holomorphic, hence orientation preserving and conformal. More precisely, we can define the conservation quantities $\vec L_k$, $Y_k$, $\vec R_k$ and $S_k$ for $\vec \Phi_k$ on $B_1$ and then set
\begin{equation}
\hat{\vec{L}}_k(z) \cqq \vec L_k\left (\frac{r_k}{z}\right ),\quad\hat{Y}_k(z) \cqq Y_k\left (\frac{r_k}{z}\right ), \quad\hat{S}_k(z) \cqq S_k\left (\frac{r_k}{z}\right ),\quad\hat{\vec{R}}_k(z) \cqq \vec R_k\left (\frac{r_k}{z}\right ). \label{reflected quantities}
\end{equation}
As $\nabla \hat{\vec{L}}_k(z) = D(r_k/\cdot)(z)^\top \nabla \vec L_k (r_k/z)$ and $ \nabla^\perp \hat{\vec{L}}_k(z) = D(r_k/\cdot)(z)^\top \nabla^\perp \vec L_k (r_k/z)$ and similar formulas hold for the other quantities, \eqref{definition L R and S}, \eqref{alternative system for S and R} and \eqref{conservative system} still hold for the new quantities and so no residues show up for the reflected immersion.
\end{remark}

\subsection{Lorentz-type estimates on annuli}\label{subsec:Lorentz type estimates}
In this section, we want to quickly recall Lorentz-type estimates on annuli, which are independent of the conformal class of the annulus. First, we recall the estimates for harmonic function on annuli whose logarithmic component vanishes. The following lemma was obtained in \cite{MichelatPointwiseExpansion}, with ideas from \cite{LaurainRiviereAngularQuantization}.

\begin{lemma}[{\cite[Proposition 2.5]{MichelatPointwiseExpansion}}]\label{lem:harmonic function L2 weak to L2 strong}
Let $0<2^6 r<R< \infty$ and $u:B_{R}\setminus \overline{B}_r\to \R$ be harmonic such that for some $\rho_0\in (r,R)$
\[\int_{\partial B_{\rho_0}} \parpar{r}{u} \dif{l} = 0.\]
Then, there is a universal constant $C$ such that for all $\left (\frac{r}{R}\right )^{1/3} < \alpha < \frac{1}{4}$
\[\|\nabla u\|_{L^{2,1}(B_{\alpha R}\setminus \overline{B}_{\alpha^{-1}r})} \leq C \sqrt{\alpha} \|\nabla u\|_{L^{2,\infty}(B_{R}\setminus \overline{B}_{r})}.\]
\end{lemma}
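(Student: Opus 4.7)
The plan is to proceed by Fourier decomposition of $u$ on the annulus. Writing
\[
u(\rho e^{i\theta}) = a_0 + b_0\log\rho + \sum_{n\geq 1}\bigl[(a_n\rho^n + c_n\rho^{-n})\cos(n\theta) + (b_n\rho^n + d_n\rho^{-n})\sin(n\theta)\bigr],
\]
the zero-circulation hypothesis $\int_{\partial B_{\rho_0}}\partial_r u\,dl = 2\pi b_0 = 0$ forces $b_0 = 0$, killing the logarithmic mode. I then split $u = a_0 + u^+ + u^-$, where $u^+$ (the positive-frequency part) extends harmonically across $B_R$ while $u^-$ (the negative-frequency part) extends harmonically to $\mathbb{C}\setminus\overline{B_r}$ and vanishes at infinity. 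Classical harmonic analysis on annuli (boundedness of the Szeg\H{o}-type projection) gives $\|\nabla u^\pm\|_{L^{2,\infty}(B_R\setminus B_r)}\leq C\|\nabla u\|_{L^{2,\infty}(B_R\setminus B_r)}$, so it suffices to bound each piece separately.

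For $u^+$: since $|\nabla u^+|$ is subharmonic on all of $B_R$, the mean-value inequality on the ball $B_{R/2}(z)\subset B_R$ (for $|z|\leq R/4$), paired with H\"older's inequality in the $L^{2,1}$--$L^{2,\infty}$ duality and $\|\mathbf{1}\|_{L^{2,1}(B_{R/2})} = \sqrt{\pi}R$, yields the uniform pointwise bound
\[
|\nabla u^+(z)|\leq \tfrac{C}{R}\|\nabla u^+\|_{L^{2,\infty}(B_R\setminus B_r)}.
\]
Multiplying by $\|\mathbf{1}\|_{L^{2,1}(B_{\alpha R})} = 2\sqrt{\pi}\alpha R$ yields the estimate on $B_{\alpha R}$ with the even sharper factor $\alpha\leq\sqrt{\alpha}$.

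For $u^-$: the naive analogue (mean value on a ball of radius $\sim |z|-r$ contained in $\mathbb{C}\setminus\overline{B_r}$) gives only the pointwise bound $|\nabla u^-(z)|\leq C|z|^{-1}\|\nabla u^-\|_{L^{2,\infty}}$, which upon integration against \eqref{L21 norm for gradient of log} produces a factor $\log(\alpha^2 R/r)$ -- too weak. To recover the claimed $\sqrt{\alpha}$ I would either (i) exploit the conformal inversion $\varphi(w) = Rr/w$, which preserves both $B_R\setminus B_r$ and the sub-annulus $B_{\alpha R}\setminus B_{\alpha^{-1}r}$ and maps $u^-$ to a function $\tilde u(w) = u^-(\varphi(w))$ regular on $B_R$, then carefully track how Lorentz norms transform under the non-uniformly-bounded Jacobian; or (ii) perform a direct mode-by-mode analysis showing that each $u^-_n = \rho^{-n}(\cdots)$ satisfies
\[
\|\nabla u^-_n\|_{L^{2,1}(B_{\alpha R}\setminus B_{\alpha^{-1}r})} \leq C\bigl(\alpha^n + (r/R)^{2n/3}\alpha^{-n}\bigr)\|\nabla u^-_n\|_{L^{2,\infty}(B_R\setminus B_r)},
\]
where under the hypothesis $\alpha > (r/R)^{1/3}$ each bracketed term is bounded by $\alpha^n\leq\sqrt{\alpha}$.

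The main obstacle is precisely this outer estimate for $u^-$: the mode summation is delicate because $L^{2,\infty}$ is not additive across Fourier modes, requiring an interpolation argument through the $L^2$-orthogonality of the modes combined with the relation $\|\nabla u^-_n\|_{L^{2,\infty}}\asymp \|\nabla u^-_n\|_{L^2}/\sqrt{n}$; the inversion route instead requires a weighted analogue of $L^{2,1}$--$L^{2,\infty}$ duality adapted to the conformally distorted metric. The hypothesis $\alpha > (r/R)^{1/3}$ is exactly the balancing condition ensuring that the contributions near $\partial B_r$ and near $\partial B_R$ are equally and optimally controlled, making it the natural threshold for the $\sqrt{\alpha}$ bound. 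Once the $u^-$ estimate is in place, the triangle inequality $\|\nabla u\|_{L^{2,1}}\leq\|\nabla u^+\|_{L^{2,1}}+\|\nabla u^-\|_{L^{2,1}}$ together with the projection bound above completes the proof.
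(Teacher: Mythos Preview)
The paper does not prove this lemma; it is quoted verbatim from \cite[Proposition~2.5]{MichelatPointwiseExpansion} (building on ideas of Laurain--Rivi\`ere), so there is no in-paper proof to compare against. Your Fourier-decomposition strategy, with the zero-flux hypothesis killing the $\log$ mode, is exactly the right starting point and is how the cited references proceed. However, the proposal as written has two genuine gaps.

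First, the projection bound $\|\nabla u^\pm\|_{L^{2,\infty}(B_R\setminus B_r)}\leq C\|\nabla u\|_{L^{2,\infty}(B_R\setminus B_r)}$ with $C$ independent of the conformal modulus $\log(R/r)$ is asserted as ``classical'' but is in fact not standard and is essentially equivalent in difficulty to the lemma itself. The Riesz-type projection onto positive/negative frequencies is bounded on $L^p$ circle-by-circle, but passing from that to a modulus-uniform $L^{2,\infty}$ bound on the annulus is not automatic, and this is where the real work lies. The standard route (as in Laurain--Rivi\`ere and Michelat) is instead to pass from $L^{2,\infty}$ on the full annulus to $L^2$ on thin boundary collars $B_R\setminus B_{R/2}$ and $B_{2r}\setminus B_r$, use $L^2$-orthogonality of the modes there to extract mode-wise coefficients, and then propagate inward using the explicit radial decay $\rho^{\pm n-1}$; the geometric sum $\sum_{n\geq 1}\alpha^n$ produces the $\sqrt\alpha$ (in fact $\alpha$) factor. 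Your option (ii) gestures at exactly this, but you stop short of carrying it out, and the phrase ``$\|\nabla u^-_n\|_{L^{2,\infty}}\asymp\|\nabla u^-_n\|_{L^2}/\sqrt n$'' is the crux that needs to be made precise and summed.

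Second, your $u^+$ mean-value step integrates $|\nabla u^+|$ over $B_{R/2}(z)$, which contains $B_r$, where you have no direct $L^{2,\infty}$ control of $\nabla u^+$ (only of $\nabla u$). This is fixable---subharmonicity of $|\nabla u^+|$ gives $\int_{B_r}|\nabla u^+|\leq C\int_{B_{2r}\setminus B_r}|\nabla u^+|$, which can then be bounded via duality on the annulus---but the proposal does not address it. Combined with the explicitly incomplete $u^-$ argument, the proof is an outline rather than a proof; the hard analytic step (modulus-uniform control of the Fourier projection in Lorentz norms) remains to be done.
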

We now consider the case when the right-hand side does not consist of Jacobians, but sufficiently regular data is assumed. It is important to obtain the estimates independent of the conformal class of the domain.
\begin{lemma}\label{lem:L21 estimate for Poisson in divergence form}
Suppose $0<\eps < 1/4$ and $p>1$. Let $g_1:B_1\setminus B_\eps\to \C$, $g_2:B_1\setminus B_{\eps} \to \R$ with $\|g_1\|_{L^{2,1}(B_1\setminus B_\eps)}<\infty$ and $\|g_2\|_{L^p(B_1\setminus B_\eps)} < \infty$.

\begin{enumerate}[label = \arabic*)]
\item  \label{lem:L21 estimate Poisson first part} Let $\phi_1 \in W^{1,2}_0(B_1\setminus B_\eps)$ be the unique solution to
\begin{equation}
\Delta \phi_1 = \Div(g_1) \quad\text{in }B_1\setminus B_\eps.\label{Poisson equation on annulus}
\end{equation}
Then there exists a constant $ C_\alpha>0$ depending only on $\alpha \in (\eps^{1/2}, 1/2)$ such that
\begin{equation}
\|\phi_1\|_{L^\infty(B_1\setminus B_\eps)} + \|\nabla \phi_1\|_{L^{2,1}(B_{\alpha}\setminus B_{\alpha^{-1} \eps})} \leq C_\alpha (1+\|g_1\|_{L^{2,1}(B_1\setminus B_\eps)}). \label{L 21 estimate for Poisson equation}
\end{equation}
\item \label{lem:L21 estimate Poisson second part} Let $\phi_2\in W^{1,2}_0(B_1\setminus B_\eps)$ be the unique solution to 
\begin{equation}
\Delta \phi_2 = g_2\quad\text{in }B_1\setminus B_\eps.\label{Poisson equation on annulus not in divergence form}
\end{equation} 
Then there exists a constant $C_{\alpha, p}>0$ depending only on $\alpha \in (\eps^{1/2}, 1/2)$ and $p$ such that
\begin{equation}
\|\phi_2\|_{L^\infty(B_1\setminus B_\eps)} + \|\nabla \phi_2\|_{L^{2,1}(B_{\alpha}\setminus B_{\alpha^{-1} \eps})} \leq C_{\alpha,p} (1+\|g_2\|_{L^{p}(B_1\setminus B_\eps)}). \label{L 21 estimate for Poisson equation not in divergence form}
\end{equation}
\end{enumerate} 
\end{lemma}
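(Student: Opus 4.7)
The plan is to reduce both statements from the annulus $\Omega := B_1\setminus B_\eps$ to the full disk $B_1$ by extending the right-hand side by zero, invoking standard disk estimates with universal constants, and then correcting by a harmonic function that cancels the mismatch on the inner boundary. Define $\bar g_i := g_i \chi_{\Omega}$ (so $\bar g_i\equiv 0$ on $B_\eps$) and let $\bar\phi_i\in W^{1,2}_0(B_1)$ solve the Poisson problem on $B_1$ with right-hand side $\Div\bar g_1$ (resp.\ $\bar g_2$) and zero boundary data on $\partial B_1$. For Part \ref{lem:L21 estimate Poisson first part}, the Riesz transforms on $B_1$ (constructed via the explicit Green's function and Poisson kernel) are bounded on Lorentz spaces by real interpolation, giving $\|\nabla\bar\phi_1\|_{L^{2,1}(B_1)}\le C\|g_1\|_{L^{2,1}}$; the two–dimensional Sobolev--Lorentz embedding $W^{1,(2,1)}_0(B_1)\hookrightarrow C^0(B_1)$ from \cite[Theorem 3.3.4]{Helein} then yields $\|\bar\phi_1\|_{L^\infty(B_1)}\le C\|g_1\|_{L^{2,1}}$. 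For Part \ref{lem:L21 estimate Poisson second part}, standard $L^p$ Calder\'on--Zygmund gives $\|\bar\phi_2\|_{W^{2,p}(B_1)}\le C_p\|g_2\|_{L^p}$, and since $p>1$ Sobolev embedding produces both $\|\bar\phi_2\|_{L^\infty}\le C_p\|g_2\|_{L^p}$ and $\nabla\bar\phi_2\in L^q(B_1)$ for any $q<\infty$, so \eqref{Lp L21 inequality} implies $\|\nabla\bar\phi_2\|_{L^{2,1}(B_1)}\le C_p\|g_2\|_{L^p}$.

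In both cases, $v_i:=\phi_i-\bar\phi_i\vert_\Omega$ is harmonic on $\Omega$ with $v_i\vert_{\partial B_1}=0$ and $v_i\vert_{\partial B_\eps}=-\bar\phi_i\vert_{\partial B_\eps}$, so the maximum principle yields $\|v_i\|_{L^\infty(\Omega)}\le\|\bar\phi_i\|_{L^\infty}$, which combined with Step 1 produces the required $L^\infty$ bound on $\phi_i$. To control $\nabla v_i$ in $L^{2,1}(B_\alpha\setminus B_{\alpha^{-1}\eps})$, I Fourier-decompose $v_i$ in the angular variable:
\[v_i(r,\theta)=c_0\log r+\sum_{n\ne 0}a_n(r^n-r^{-n})e^{in\theta},\qquad a_n=\frac{\hat f_i(n)}{\eps^n-\eps^{-n}},\quad c_0=-\frac{\hat f_i(0)}{\log(1/\eps)},\]
where $f_i:=v_i\vert_{\partial B_\eps}$ satisfies $\|f_i\|_\infty\le C\|\bar\phi_i\|_\infty$. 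The logarithmic mode satisfies $|\nabla(c_0\log r)|=|c_0|/|x|$, whose $L^{2,1}$ norm on the interior annulus is $|c_0|\cdot\|1/|x|\|_{L^{2,1}(B_\alpha\setminus B_{\alpha^{-1}\eps})}\le C|c_0|\log(\alpha^2/\eps)$ by \eqref{L21 norm for gradient of log}, and this is bounded by $C\|f_i\|_\infty$ uniformly in $\eps$ thanks to the cancellation $\log(\alpha^2/\eps)/\log(1/\eps)\in(0,1]$ (valid since $\alpha>\eps^{1/2}$).

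For the non-constant modes, the estimate $|a_n|\le 2|\hat f_i(n)|\eps^{|n|}$ and the sums $\sum_{m\ge 1}m(\eps/r)^m=\frac{\eps/r}{(1-\eps/r)^2}$ and $\sum_{m\ge 1}m(r\eps)^m=\frac{r\eps}{(1-r\eps)^2}$ combine, since $\eps/r\le\alpha$ and $r\eps\le\alpha$ throughout $B_\alpha\setminus B_{\alpha^{-1}\eps}$, to the pointwise bound
\[\Bigl|\nabla\Bigl(v_i-c_0\log r\Bigr)\Bigr|(x)\le C_\alpha\,\|f_i\|_\infty\,\frac{\eps}{|x|^2}.\]
A direct layer-cake calculation gives $\|\eps/|x|^2\|_{L^{2,1}(B_\alpha\setminus B_{\alpha^{-1}\eps})}\le 2\sqrt\pi\,\alpha$, so the non-constant modes contribute at most $C_\alpha\|f_i\|_\infty$. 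Combining $\nabla\phi_i=\nabla\bar\phi_i+\nabla v_i$ completes the proof of both inequalities.

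The main obstacle is the final step, the interior $L^{2,1}$ control of $\nabla v_i$: while the $L^\infty$ bound on $v_i$ is immediate from the maximum principle, the naive interior harmonic estimate $|\nabla v_i|\le C\|v_i\|_\infty/\dist(\cdot,\partial\Omega)$ produces constants that blow up as $\eps\to 0$ near the inner boundary. The Fourier analysis above circumvents this by exhibiting the precise cancellation between the logarithmic coefficient $c_0=O(1/\log(1/\eps))$ and the logarithmic blow-up $\log(\alpha^2/\eps)$ of $\|1/|x|\|_{L^{2,1}}$, which is exactly what makes the constant $\eps$-independent.
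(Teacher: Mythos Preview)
Your proof is correct and follows the same overall architecture as the paper's: extend the datum by zero to $B_1$, solve on the disk with universal (i.e.\ $\eps$-independent) constants, and control the harmonic defect $v_i=\phi_i-\bar\phi_i$ via its Laurent expansion on the annulus, isolating the logarithmic mode. Both proofs exploit the same cancellation $|c_0|\lesssim 1/\log(1/\eps)$ against $\|1/|x|\|_{L^{2,1}(B_\alpha\setminus B_{\alpha^{-1}\eps})}\lesssim\log(1/\eps)$.

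The genuine difference is in how the non-logarithmic modes of $v_i$ are handled. The paper, after extracting $l_0\log|z|$, bounds $\|\nabla v\|_{L^{2,1}}$ on the inner annulus by invoking \cite[Lemma A.2]{LaurainRiviereAngularQuantization}, which feeds on $\|\nabla v\|_{L^2(B_1\setminus B_\eps)}$; this requires the auxiliary energy estimate \eqref{L2 gradient estimate for phi 1 and phi 3} and is the source of the additive $+1$ in \eqref{L 21 estimate for Poisson equation}. You instead work directly from the $L^\infty$ boundary data, summing the geometric series to get the pointwise bound $|\nabla(v_i-c_0\log r)|\le C_\alpha\|f_i\|_\infty\,\eps/|x|^2$, whose $L^{2,1}$ norm on the inner annulus is $O(\alpha)$ by a layer-cake computation. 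This is more elementary and self-contained (no external lemma), and in fact yields a homogeneous estimate $C_\alpha\|g_1\|_{L^{2,1}}$ without the $+1$, a slight sharpening of the stated lemma.

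One minor imprecision: for $1<p<2$ the Sobolev embedding of $W^{2,p}(B_1)$ only places $\nabla\bar\phi_2$ in $L^{2p/(2-p)}$, not in $L^q$ for all $q<\infty$ as you wrote; but since $2p/(2-p)>2$ for $p>1$, \eqref{Lp L21 inequality} still applies and your conclusion is unaffected.
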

\begin{proof}
\begin{enumerate}[label = \arabic*)]
\item Suppose that $p\in (1,\infty)$ and $\Omega\subset \C$ is open and bounded with smooth boundary. For $f\in L^p(\Omega, \R^2)$, \cite[Theorem 3.29, 3.31]{AmbrosioLectureNotes} shows that there is a unique solution $u\in W^{1,p}_0(\Omega)$ to
\[\Delta u = \Div(f)\quad\text{in }\Omega\]
and the operator $T:L^p(\Omega)\to L^p(\Omega)$, $Tf\cqq \nabla u$ satisfies $\|T\| \leq C(p, \Omega)$. Using the interpolation theorem for Lorentz spaces, see \cite[Theorem 3.3.3]{Helein} shows that $T:L^{2,1}(\Omega)\to L^{2,1}(\Omega)$ is a bounded linear operator with
\begin{equation}
\|Tf\|_{L^{2,1}(\Omega)}\leq C(\Omega)\|f\|_{L^{2,1}(\Omega)}.\label{L21 interpolation for Poisson equation}
\end{equation}
Consider $\psi_1\in W^{1,2}_0(B_1)$ solving 
\begin{equation}
\Delta \psi_1 = \Div\left (\chi_{B_1\setminus B_\eps} g_1\right )\quad \text{in }B_1.\label{cut Poisson equation on annulus trick}
\end{equation}
By standard elliptic regularity for $L^2$-data, we have
\begin{equation}
\|\nabla \phi_1\|_{L^2(B_1\setminus B_\eps)} + \|\nabla \psi_1\|_{L^2(B_1\setminus B_\eps)}\leq 2\|g_1\|_{L^2(B_1\setminus B_\eps)}.\label{L2 gradient estimate for phi 1 and phi 3}
\end{equation}
\eqref{L21 interpolation for Poisson equation} shows that 
\begin{equation}
\|\nabla \psi_1\|_{L^{2,1}(B_1)}\leq  C \|g_1\|_{L^{2,1}(B_1\setminus B_\eps)},\label{L21 estimate on the full ball}
\end{equation}
where $C=C(B_1)$ does not depend on $\eps$. Furthermore, after extending $\psi_1$ by 0 to $\C$, \cite[Theorem 3.3.4]{Helein} implies 
\begin{equation}
\|\psi_1\|_{L^{\infty}(B_1)}\leq C\|g_1\|_{L^{2,1}(B_1\setminus B_\eps)}.\label{phi 1 L infinity estimate}
\end{equation}
Consider $v\cqq (\psi_1-\phi_1)\vert _{B_1\setminus B_\eps}$. By definition, $v$ is harmonic and it holds by \eqref{phi 1 L infinity estimate}
\begin{equation}
\|v\|_{L^\infty(\partial B_\eps)} \leq \|\psi_1\|_{L^\infty(B_1)} \leq C\|g_1\|_{L^{2,1}(B_1\setminus B_\eps)},\quad v\vert _{\partial B_1} = 0.\label{harmonic part bounded at boundary}
\end{equation}
Using the classification of harmonic functions on annuli, we can write in complex notation
\[v(z) = l_0 \log |z| + \Re \sum _{n=-\infty}^\infty c_n z^n\]
for coefficients $l_0\in \R$, $c_n \in \C$. It follows that $c_0 = 0$ and by \eqref{harmonic part bounded at boundary}
\[ |l_0| |\log(\eps)|=\left | \dashint _{\partial B_\eps} v \dif{l} \right | \leq  C\|g_1\|_{L^{2,1}(B_1\setminus B_\eps)}.\]
We obtain that $|l_0| \leq C\|g_1\|_{L^{2,1}(B_1\setminus B_\eps)}\frac{1}{|\log(\eps)|}$. From here, we copy the proof of \cite[Lemma A.2]{LaurainRiviereAngularQuantization} to see that
\begin{equation}
\|\nabla v\|_{L^{2,1}(B_{\alpha}\setminus B_{\alpha^{-1}\eps})}\leq C_\alpha (1+\|\nabla v\|_{L^2(B_1\setminus B_\eps)})\leq C_\alpha (1+\|g_1\|_{L^2(B_1\setminus B_\eps)}). \label{harmonic part gets L21 estimates}
\end{equation}
Finally, the maximum principle and \eqref{harmonic part bounded at boundary} imply that $\|v\|_{L^\infty(B_1\setminus B_\eps)} \leq C\|g_1\|_{L^{2,1}(B_1\setminus B_\eps)}$ and so together with \eqref{phi 1 L infinity estimate}, we conclude
\begin{equation}
\|\phi_1\|_{L^\infty(B_1\setminus B_\eps)}\leq C\|g_1\|_{L^{2,1}(B_1\setminus B_\eps)}.\label{phi L infinity bound}
\end{equation}
\eqref{L21 estimate on the full ball}, \eqref{harmonic part gets L21 estimates}, and \eqref{phi L infinity bound} give \eqref{L 21 estimate for Poisson equation}.
\item Denote by $p^* = \frac{p}{p-1}$ the dual exponent of $p$. Extend $\phi_2$ and $g_2$ by $0$ inside $B_{\eps}$. Then
\begin{equation}
\int _{B_1} |\nabla \phi_2|^2 \dif{x} = - \int _{B_1} \phi_2 g_2 \dif{x} \leq \|\phi_2\|_{L^{p^*}(B_1)} \|g_2\|_{L^p(B_1\setminus B_\eps)}.\label{Testing nabla phi2 with itself}
\end{equation}
Applying standard Sobolev estimates, it holds
\begin{equation}
\|\phi_2\|_{L^{p^*}(B_1)}\leq C_p \| \phi_2\|_{W^{1,2}(B_1)} \leq C_p \|\nabla \phi_2\|_{L^2(B_1)}.\label{Lp star estiamte for phi2}
\end{equation}
Plugging in \eqref{Lp star estiamte for phi2} into \eqref{Testing nabla phi2 with itself} yields
\begin{equation}
\|\nabla \phi_2\|_{L^2(B_1)} \leq C_p \|g_2\|_{L^p(B_1\setminus B_\eps)}.\label{L2 estimate for phi2}
\end{equation}
From here we follow the same idea as in the proof of \ref{lem:L21 estimate Poisson first part}, where we use that 
for the weak solution $\psi_2 \in W^{2,p}(B_1)\cap W^{1,p}_0(B_1)$ of $\Delta \psi_2 = \chi _{B_1\setminus B_\eps} g_2$, the estimate
\[\|\psi_2\|_{L^\infty(B_1)} + \|\nabla \psi_2\|_{L^{2,1}(B_1)}\leq C_{p} \|g_2\|_{L^p(B_1\setminus B_\eps)}\]
holds by \cite[Theorem 9.17]{GilbargTrudinger}, standard Sobolev embeddings and \eqref{Lp L21 inequality}.
\end{enumerate}
\end{proof}

\section{Improved estimates on the mean curvature in neck regions}\label{sec:L21 estimates on mean curvature in neck regions}

\begin{lemma}[$L^{2,1}$-control on the mean curvature in neck regions] \label{lem: L21 control mean curvature}
Let $\Lambda > 0$. There exists a constant $\eps_3 = \eps_3(\Lambda)>0$ depending only on $\Lambda$ and a universal constant $\alpha_0>0$ such that the following holds. Let $r_k \searrow 0$ as $k\to \infty$ and let $\vec \Phi_k:B_{1}\to \R^3$ be a sequence of conformal constrained Willmore immersions such that for all $k$,
\begin{equation}
\sup _{r_k<s<1/2} \int _{B_{2s} \setminus B_s } |\nabla \vec n _{k}|^2 \dif{x} \leq \eps_3(\Lambda) \label{double annulus small energy}
\end{equation}
and
\begin{equation}
 \|\nabla \lambda_k\|_{L^{2,\infty}(B_{1})} +  \int _{B_{1} } |\nabla \vec n _{k}|^2 \dif{x} +|\alpha_k|+|\beta_k|+|\gamma_k| + \mc A(\vec \Phi_k) + \|\vec \Phi_k\|_{L^\infty(B_{1})}\leq \Lambda,\label{H L21 Lemma assumptions}
\end{equation}
where $\vec n_k$ and $\lambda_k$ are again the unit normal vector and the conformal factor of $\vec \Phi_k$ and $\alpha_k$, $\beta_k$, $\gamma_k$ are the associated Lagrange multipliers. Then it holds that either
\begin{equation}
\limsup _{k\to \infty}\left \|e^{\lambda_k} \vec H _{k} \right \|_{L^{2,1}(B_{\alpha_0}\setminus B_{\alpha_0 ^{-1 }r_k})} \leq C(\Lambda),\label{l21 estimate for H}
\end{equation}
where $\vec H_{k}$ is the mean curvature of $\vec \Phi_k$ and $C(\Lambda)$ is a constant which only depends on $\Lambda$, or \eqref{l21 estimate for H} holds for the family $\hat{\vec{\Phi}}_k:B_{1}\setminus B_{r_k} \to \R^3$ defined by 
\begin{equation}
\hat{\vec{\Phi}}_k(z) \cqq \vec{\Phi}_k \left ( \frac{r_k}{z}\right ).\label{inverted immersion}
\end{equation}
\end{lemma}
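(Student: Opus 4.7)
The plan is to combine the Noether system of \Cref{subsec:conservative system for constrained equation} with the $L^p$-estimate for the conformal factor (\Cref{lem:Lp lemma}) and the Lorentz-type Poisson estimates (\Cref{lem:L21 estimate for Poisson in divergence form}). The argument proceeds in three stages.

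\textbf{Stage 1 (setup and dichotomy).} From the double-annulus smallness \eqref{double annulus small energy}, a dyadic summation over annuli $B_{2s}\setminus B_{s}$ covering $B_{1/2}\setminus B_{2r_k}$ yields $\|\nabla \vec n_k\|_{L^{2,\infty}(B_{1/2}\setminus B_{2r_k})} \le C\sqrt{\eps_3}$. Choosing $\eps_3 = \eps_3(\Lambda)$ sufficiently small enforces the smallness hypothesis of \Cref{lem: original pointwise lambda estimate} and \Cref{lem:Lp lemma}. Up to a subsequence and, if the third case of \Cref{lem:Lp lemma} occurs, replacing $\vec \Phi_k$ by $\hat{\vec \Phi}_k$---which by \Cref{rem:conservative system for reflected immersion} preserves the full conservative system---we may therefore assume that $\|e^{\lambda_k}\|_{L^p(B_{1/2}\setminus B_{2r_k})} \le C(\Lambda)$ for some $p = p(\Lambda) > 2$, together with the pointwise expansion \eqref{original pointwise lambda estimate}.

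\textbf{Stage 2 (control of the Noether quantities).} Working with $Y_k, \vec R_k, S_k$ solving \eqref{definition Y}--\eqref{definition L R and S} on $B_{1/2}\setminus B_{2r_k}$, I first estimate $\Delta Y_k$. The terms $\alpha_k e^{2\lambda_k}$ and $\beta_k e^{2\lambda_k}\langle\vec \Phi_k, \vec n_k\rangle$ lie in $L^{p/2}$ thanks to the $L^p$-bound on $e^{\lambda_k}$ and the hypothesis $\|\vec \Phi_k\|_{L^\infty}\le\Lambda$, while the pointwise inequality $e^{\lambda_k}|H_k| \le C|\nabla \vec n_k|$ and Hölder give $\gamma_k e^{2\lambda_k}H_k = \gamma_k e^{\lambda_k}(e^{\lambda_k} H_k) \in L^q$ with $q = 2p/(p+2) > 1$. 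An application of \Cref{lem:L21 estimate for Poisson in divergence form}\,\ref{lem:L21 estimate Poisson second part} then provides a universal $\alpha_0$ with $\|\nabla Y_k\|_{L^{2,1}(B_{\alpha_0}\setminus B_{\alpha_0^{-1}r_k})} \le C(\Lambda)$. For $\vec R_k$ and $S_k$, the Jacobian terms on the right-hand side of \eqref{conservative system} are rewritten in divergence form using $\Div \nabla^\perp \vec n_k = 0$ (e.g.\ $\nabla^\perp \vec n_k\cdot \nabla S_k = \Div(S_k\nabla^\perp \vec n_k)$), after which \Cref{lem:L21 estimate for Poisson in divergence form}\,\ref{lem:L21 estimate Poisson first part} together with the $L^{2,\infty}$-smallness of $\nabla \vec n_k$ and the bound on $\nabla Y_k$ just obtained, allows a standard Wente-type bootstrap giving $\|\nabla \vec R_k\|_{L^{2,1}} + \|\nabla S_k\|_{L^{2,1}} \le C(\Lambda)$ on the same annulus.

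\textbf{Stage 3 (assembly).} Combining the third line of \eqref{conservative system} with the identity $\Delta \vec \Phi_k = 2e^{2\lambda_k}\vec H_k$ and dividing by $2e^{\lambda_k}$, together with $|\nabla \vec \Phi_k|=\sqrt{2}\,e^{\lambda_k}$, produces the pointwise bound
\begin{equation*}
  |e^{\lambda_k}\vec H_k| \le C\bigl(|\nabla S_k|+|\nabla \vec R_k|+|\nabla Y_k|\bigr) + C|\beta_k|\,|\vec \Phi_k|^2\, e^{\lambda_k}.
\end{equation*}
The first three contributions are controlled in $L^{2,1}$ by Stage 2, and the last term lies in $L^p$ on the fixed annulus, hence in $L^{2,1}$ by \eqref{Lp L21 inequality}. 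This yields \eqref{l21 estimate for H}.

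\textbf{Main obstacle.} The delicate point is closing the coupled system for $(\vec R_k, S_k, Y_k)$: the $\vec R_k$-equation involves both $Y_k$ and a self-coupling through $\nabla^\perp \vec n_k \times \nabla \vec R_k$. The decoupling is possible because $Y_k$ satisfies a Poisson equation whose right-hand side is already controlled without reference to $\vec R_k, S_k$, and because the $L^{2,\infty}$-smallness of $\nabla \vec n_k$ (tunable through $\eps_3$) makes the Jacobian contributions in the $\vec R_k$--$S_k$ system absorbable by the left-hand side in the Wente spirit. The second subtle point is uniformity in $k$ when passing to the reflected immersion, which ultimately rests on \Cref{rem:conservative system for reflected immersion} and the conformal invariance of every norm used above.
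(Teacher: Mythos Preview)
Your outline captures the overall architecture, but there is a genuine gap in Stage~2, and it is precisely the step the paper works hardest to close.

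\medskip
\textbf{The gap: controlling $\nabla Y_k$.}
You propose to apply \Cref{lem:L21 estimate for Poisson in divergence form}\,\ref{lem:L21 estimate Poisson second part} to $Y_k$ after estimating $f_k=\Delta Y_k$ in $L^q$ for some $q>1$ on the annulus. But that lemma applies only to the solution with \emph{zero Dirichlet data on the full boundary of the annulus}. The quantity $Y_k$ from \eqref{definition Y} is defined on the simply connected disk (this is forced: the Poincar\'e lemma producing $S_k,\vec R_k$ needs a simply connected domain), and its restriction to $B_{\alpha_0}\setminus B_{\alpha_0^{-1}r_k}$ does \emph{not} vanish on the inner boundary. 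Writing $Y_k=Y_k^{(\text{ann})}+u_k$ with $u_k$ harmonic on the annulus, the logarithmic part of $u_k$ has coefficient
\[
  l_k \;=\; \frac{1}{2\pi}\int_{\partial B_{2^{3}r_k}}\partial_r Y_k\,dl \;=\; \frac{1}{2\pi}\int_{B_{2^{3}r_k}} f_k\,dx.
\]
By \eqref{L21 norm for gradient of log} one has $\bigl\|l_k\nabla\log|\cdot|\bigr\|_{L^{2,1}(B_{\alpha_0}\setminus B_{\alpha_0^{-1}r_k})}\sim |l_k|\,\log(1/r_k)$. Hence you need $|l_k|\log(1/r_k)\le C(\Lambda)$, i.e.\ a bound on $\int_{B_{2^{3}r_k}}f_k$ that \emph{decays} like $1/\log(1/r_k)$. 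Nothing in your hypotheses gives this: inside the innermost ball a bubble may carry order-one area, so $\|f_k\|_{L^1(B_{2^{3}r_k})}$ is only $O(1)$, and your $L^q$ bound on the annulus says nothing there.

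\medskip
The paper obtains exactly this decay via the balancing estimate \Cref{lem: balancing term bound}. The key identity is \eqref{divergence form for balancing integrand}, which converts $\int_{B_r}f_k$ into a boundary integral involving $\partial_\theta\vec L_k$; estimating the latter requires the pointwise bound \eqref{Lk pointwise estimate} on $e^{\lambda_k}\vec L_k$, which in turn rests on the gradient estimate \eqref{nabla H pointwise estimate} for $H_k$ coming from the extrinsic equation \eqref{extrinsic constrained system}. These are Parts~2--3 of the paper's proof and have no counterpart in your outline.

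\medskip
\textbf{A secondary issue.}
Your ``standard Wente-type bootstrap'' for $\vec R_k,S_k$ also presupposes an a~priori $L^{2,\infty}$ bound on $\nabla\vec R_k,\nabla S_k$ (otherwise the Jacobian right-hand sides are not even known to lie in a space to which \Cref{lem:L21 estimate for Poisson in divergence form} applies). In the paper this comes from \eqref{pointwise nabla Sk and nabla Rk bound}, which again uses the pointwise $\vec L_k$ control established in Part~2; without it the bootstrap cannot start.
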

\begin{proof}
We will adapt the proof from \cite[Lemma VI.1]{BernardRiviere} and deal with the additional terms that arise. All constants $C$, which may change from line to line, may depend on $\Lambda$, but not on $k$. We will break the proof into several steps.
% In the proof, $\alpha>1$ will be a scaling factor that might change from line to line.

\subsubsection*{Part 1. Establishing estimates for \texorpdfstring{$\vec n_k,\,  \lambda_k \text{ and }\vec H_k$}{nabla n, lambda and H}}
We choose $\eps_3(\Lambda)$ smaller than $\eps_1(\Lambda)$ from \Cref{thm:eps regularity}. By applying this $\eps$-regularity result to the ball $B_{|x|/4}(x)$, it holds for $x\in B_{1/2} \setminus B_{2r_k} $ that
\begin{equation}
|\nabla \vec n_{k}(x)|^2 \leq C |x|^{-2} \int _{B_{4|x|/3} \setminus B_{3|x|/4} } |\nabla \vec n_{k}|^2 \dif{x} \leq C \eps_3(\Lambda) |x|^{-2}.\label{pointwise nabla n bound}
\end{equation}
It follows from \eqref{L2infty norm for gradient of log} that
\begin{equation}
\|\nabla \vec n_{k}\|_{L^{2,\infty}(B_{1/2} \setminus B_{2r_k} )} \leq C\sqrt{\eps_3(\Lambda)}.\label{vec n weak L2 bound}
\end{equation}
Choosing $C \sqrt{\eps_3(\Lambda)}$ smaller than $\eps_2$ from \Cref{lem: original pointwise lambda estimate} allows us to apply this lemma and \Cref{rem:mean value argument} to obtain constants $A_k$ (depending on $r_k,\Lambda$ and $\vec \Phi_k$) and constants $d_k$ such that 
\begin{equation}
|d_k|\leq C\label{dk bounded}
\end{equation}
for all $k\in \N$ and 
\begin{equation}
\|\lambda_k(x) - d_k\log |x| -A_k\|_{L^\infty(B_{1/4}\setminus B_{4r_k})}\leq C. \label{logarithmic estimate conformal factor}
\end{equation}
%Denoting $\overline{\lambda}_k = \dashint_{B_{4r_k}\setminus B_{r_k}}\lambda_k \dif{x}$ and using the bound for $\|\nabla \lambda \|_{L^{2,\infty}(B_1)}$ from \eqref{H L21 Lemma assumptions}, the Poincar\' e inequality implies
%\[\dashint _{B_{4r_k}\setminus B_{r_k}} |\lambda_k - \overline{\lambda}_k|\dif{x} \leq \frac{C}{r_k} \int _{B_{4r_k}\setminus B_{r_k}} |\nabla \lambda_k| \dif{x} \leq C \|\nabla \lambda_k \|_{L^{2,\infty}(B_{4r_k}\setminus B_{r_k})} \leq C.\]
%A similar estimate holds on $B_{1}\setminus B_{1/4}$. Together with \eqref{dk bounded} and \eqref{logarithmic estimate conformal factor}, this implies
%\begin{equation}
%\|\lambda_k(x) - d_k\log |x| -A_k\|_{L^\infty(B_{1}\setminus B_{r_k})}\leq C . \label{logarithmic estimate conformal factor2}
%\end{equation}
Up to a subsequence, we may apply \Cref{lem:Lp lemma} to see that $\lim_{k\to \infty} d_k = d$, where $|d+1|>\delta$ for some $\delta=\delta(\Lambda)>0$. In the case $d>-1+\delta$, we keep working with the immersion $\vec \Phi_k$, whereas in the case $d<-1-\delta$, we work with the reflected immersion $\hat{\vec{\Phi}}_k$ defined in \eqref{inverted immersion} as
\[\hat{\vec{\Phi}}_k:\C\setminus B_{r_k}\to \R^3, \quad \hat{\vec{\Phi}}_k(z)= \vec \Phi_k \left (\frac{r_k}{z}\right ).\]
The corresponding conformal factor $\hat{\lambda}_k$ of $\hat{\vec{\Phi}}_k$ is given by \eqref{reflected immersion conformal factor} and in particular $\hat{d}_k = -(2+d_k)\to -(2+d)=\hat{d}>-1+\delta$ as $k\to \infty$. The analogous estimates for \eqref{pointwise nabla n bound}, \eqref{vec n weak L2 bound}, and \eqref{logarithmic estimate conformal factor} still hold for the reflected immersion $\hat{\vec{\Phi}}_k$. The arguments up to and including \eqref{Lk pointwise estimate} are the same for $\vec \Phi_k$ and $\hat{\vec{\Phi}}_k$ and so we do not distinguish between them until then.
%\textcolor{blue}{I think I need to watch out that we do not assume or use $\|\nabla \tilde{\lambda}_k\|_{L^{2,\infty}}\leq C$ anywhere, as the $L^{2,\infty}$ norm may be invariant under rescaling, but not necessarily under inversions. However I don't think we ever really use this, we first apply all known results to $\vec \Phi_k$, then look at the inverted immersion. Results like \eqref{pointwise nabla n bound} carry over as this is invariant under inversions. Hence, also results like \eqref{vec n weak L2 bound} carry over. Check the proof and the rest of this paper for this potential pitfall}
Notice that since we assume bounded area, \eqref{logarithmic estimate conformal factor} together with Jensens inequality imply
\begin{equation}
\Lambda \geq \mc A(\vec \Phi_k) \geq \int _{B_{2s}\setminus B_{s}} e^{2\lambda_k} \dif{x} \geq C s^2 e^{2 \dashint_{B_{2s}\setminus B_s} \lambda_k \dif{x} } \implies e^{\lambda_k(x)}\leq \frac{C}{|x|}\label{Jensen in Lemma}
\end{equation}
for all $x\in B_{1/4}\setminus B_{4r_k}$. We define
\begin{align}
\delta_k'(s)&\cqq \left ( s^{-2} \int _{B_{2s} \setminus B_{s/2} } |\nabla \vec n_{k}|^2 \dif{x} \right )^{1/2} \leq \frac{C}{s} \quad\text{for $s\in (2r_k, 1/2)$},\\
\delta_k(s)&\cqq \left ( s^{-2} \int _{B_{4s} \setminus B_{s/4} } |\nabla \vec n_{k}|^2 \dif{x} \right )^{1/2}\leq \frac{ C}{s} \quad\text{for $s\in (4r_k, 1/4)$}.\label{delta_k bound}
\end{align}
It holds that $\delta_k'(s')\leq 2 \delta_k(s)$ for all $s' \in (s/2, 2s)$. Furthermore, \eqref{pointwise nabla n bound} implies
\begin{equation}
|e^{\lambda_k(x)} \vec H_{k}(x)| \leq |\nabla \vec n_{k}(x)| \leq C\delta_k'(|x|)\quad \text{for all $x\in B_{1/2} \setminus B_{2r_k} $.}\label{e to lambda H and nabla n bound by delta}
\end{equation}
As in \cite[(VI.11)]{BernardRiviere}, an application of Fubini shows
\begin{equation}
\int_{4r_k}^{1/4} s \delta_k(s)^2 \dif{s} \leq C \|\nabla \vec n_k\|^2_{L^{2}(B_{1}\setminus B_{r_k})}\leq C.\label{VI 11 equation}
\end{equation}
\subsubsection*{Part 2. Estimates for \texorpdfstring{$\vec L_k$}{L}}
Let us consider \eqref{extrinsic constrained system} for a moment.  Recall that in local conformal coordinates, the Laplace-Beltrami operator $\Delta_g$ can be written as $\Delta_g = e^{-2\lambda_k} \Delta$, where $\Delta$ is the standard Laplacian on $\R^2$. Recall from \eqref{extrinsic constrained system} that
\begin{equation}
\Delta H_{k} = e^{2\lambda_k}(-2H_{k}(H_{k}^2-K_{k}) + 2\alpha_k H_{k} + \beta_k + \gamma_k K_{k})\quad \text{in $B_{1} \setminus B_{r_k} $}.\label{recast Poisson equation for H}
\end{equation}
The right-hand side of \eqref{recast Poisson equation for H} can be bounded, using \eqref{pointwise nabla n bound}, \eqref{Jensen in Lemma}, \eqref{e to lambda H and nabla n bound by delta}, and $|e^{2\lambda_k}K_k| \leq |\nabla \vec n_k|^2$, by
\begin{align}
&\quad \, e^{2\lambda_k}(-2H_{k}(H_{k}^2-K_{k})+2\alpha_k H_{k} +\beta_k + \gamma_k K_{k})\notag\\
& \leq C(e^{-\lambda_k}\delta_k'(|x|) |\nabla \vec n_k|^2 + e^{\lambda_k} \delta_k'(|x|) + e^{2\lambda_k} + |\nabla \vec n_k|^2)\notag\\
&\leq C\frac{e^{-\lambda_k}}{|x|^2} (\delta_k'(|x|) + e^{\lambda_k} ) \label{estimate right side for pointwise gradient estimate}
\end{align}
for $x\in B_{2^{-3}} \setminus B_{2^{3}r_k} $. We apply \cite[Theorem 3.9]{GilbargTrudinger} to \eqref{recast Poisson equation for H} in the region $B_{2|x|} \setminus B_{|x|/2} (x)$ for $x\in B_{1/4} \setminus B_{4r_k} $. With \eqref{logarithmic estimate conformal factor}, \eqref{e to lambda H and nabla n bound by delta}, and \eqref{estimate right side for pointwise gradient estimate}, it follows
\begin{align}
|\nabla H_{k} (x)| &\leq \frac{C}{|x|}(\|H_{k}\|_{L^{\infty}(B_{2|x|}\setminus B_{|x|/2})} + \||x|^2 \Delta H_{k}\|_{L^{\infty}(B_{2|x|}\setminus B_{|x|/2})})\notag\\
&\leq \frac{C}{|x|} e^{-\lambda_k(x)} (\delta_k(|x|) + e^{\lambda_k(x)}) \label{nabla H pointwise estimate}. 
\end{align}
Recall that $\vec L_{k}$ is defined in \eqref{definition L R and S} on $B_{1}$ to satisfy
\begin{equation} \label{vec L definition}
\nabla^\perp \vec L_{k} =  \underbrace{\frac{1}{2} (\nabla \vec H_{k} - 3\pi_{\vec n_{k}}(\nabla \vec H_{k})+ \nabla ^\perp \vec n_{k} \times \vec H_{k})}_{=-\frac{1}{2}(2\nabla \vec H_{k} - 3 H_{k} \nabla \vec n_{k} +  \vec H_{k} \times \nabla^\perp \vec n_{k})} -  \left (-\alpha_k \nabla \vec \Phi_k + \frac{\beta_k}{2} \vec \Phi_k \times \nabla ^\perp \vec \Phi_k- \frac{\gamma_k}{2} (\nabla \vec n_{k} + 2H_{k} \nabla \vec \Phi_k)\right ).
\end{equation}
In the case that we work with the reflected immersion, this can still be done, see \Cref{rem:conservative system for reflected immersion}. We let for $r\in (r_k, 1)$
\begin{equation}
\vec L_{k,r} \cqq \dashint _{\partial B_r } \vec L_{k} \dif{l}.\label{mean value for L}
\end{equation}
Using the Poincar\' e inequality, \eqref{H L21 Lemma assumptions}, \eqref{Jensen in Lemma}, \eqref{delta_k bound}, \eqref{nabla H pointwise estimate}, and \eqref{vec L definition}, we deduce for $x\in B_{2^{-3}}\setminus B_{2^3r_k}$
\begin{align}
e^{\lambda_k(x)}|\vec L_k(x) - \vec L_{k,|x|}| &\leq e^{\lambda_k(x)} \int _{\partial B_{|x|}}|\nabla \vec L_k| \dif{l} \notag\\
&\leq C |x| \left ( \frac{\delta_k(|x|) + e^{\lambda_k(x)}}{|x|} + \delta_k(|x|)^2 + e^{2\lambda_k(x)} + \delta_k(|x|)e^{\lambda_k(x)}\right )\notag\\
&\leq C(\delta_k(|x|) + e^{\lambda_k(x)})\label{L minus average estimate}.
\end{align}
Integrating \eqref{L minus average estimate} and using \eqref{H L21 Lemma assumptions} and \eqref{VI 11 equation} yields
\begin{equation}
\int_{B_{2^{-3}}\setminus B_{2^3 r_k}} e^{2\lambda_k} |\vec L_k(x) - \vec L_{k,|x|}|^2 \dif{x} \leq C.\label{L minus average integral estimate}
\end{equation}
Expressing $x\in \partial B_r$ as a pair $(r,\theta)$ in polar coordinates gives by \eqref{vec L definition}
\begin{align}
\frac{\dd \vec L_{k,r}}{\dd r} = \dashint_0^{2\pi} \partial_r \vec L_k (r,\theta) \dif{\theta} &= \dashint_0^{2\pi} 	H_k\frac{3}{2r} \parpar{\theta}{\vec n_k} - \frac{1}{2} \vec H_k \times \parpar{r}{\vec n_k}\dif{\theta}\notag\\
&\quad + \dashint_0^{2\pi} -\frac{\beta_k}{2}\vec \Phi_k \times \parpar{r}{\vec \Phi_k} + \frac{1}{r}\gamma_k H_k \parpar{\theta}{\vec \Phi_k} \dif{\theta}.\label{derivative averages of L}
\end{align}
Here, we used that the integral of the tangential derivative over a circle vanishes, namely $\int _{0}^{2\pi} \partial_\theta \vec H_k \dif{\theta} = \int _{0}^{2\pi} \partial _{\theta} \vec n_k \dif{\theta} = 0$. Setting $a_k(r) = |\vec L_{k,r}|$ and estimating the terms in \eqref{derivative averages of L} shows
\[|a_k'(r)| \leq \left | \frac{\dd \vec L_{k,r}}{\dd r}\right | \leq C e^{-\lambda_k} \delta_k(r)^2 + C\delta_k(r) +C e^{\lambda_k},\]
where we write $\lambda_k(r) = \dashint _{\partial B_r} \lambda_k \dd{l}$. It follows, using \eqref{VI 11 equation} and also Cauchy-Schwarz together with $\mc A(\vec \Phi_k)\leq \Lambda$, that
\[\int _{2^3r_k}^{2^{-3}} s e^{\lambda_k} |a_k'(s)| \dif{s} \leq C.\]
Thus
\begin{equation}
\int _{2^3r_k}^{2^{-3}} s^{1+d_k} |a_k'(s)| \dif{s} \leq C e^{-A_k}.\label{VI 19 counterpart}
\end{equation}
Equations \eqref{L minus average integral estimate} and \eqref{VI 19 counterpart} are the counterparts of \cite[(VI.14),(VI.19)]{BernardRiviere}. We follow precisely the same arguments as in \cite{BernardRiviere} up to \cite[(VI.24)]{BernardRiviere} to conclude that
\begin{equation}
e^{\lambda_k(x)} |\vec L_k (x)| \leq \frac{C}{|x|}\label{Lk pointwise estimate}
\end{equation}
for all $x\in B_{2^{-3}} \setminus B_{2^3 r_k}$, after possibly shifting $\vec L_{k}$ to ensure that $\vec L_{k,2^{-3}}=0$.
\subsubsection*{Part 3. Obtaining estimates for the balancing term \texorpdfstring{$\mc B_k$}{B_k}}
We define in $B_{2^{-3}}$
\begin{equation}
f_k \cqq e^{2\lambda_k}(-2\alpha_k + \beta_k \langle \vec \Phi_k, \vec n_k\rangle- \gamma_k H_k )\label{fk definition}
\end{equation}
and define
\begin{equation}
\mc B_k \cqq \int _{B_{\sqrt{r_k}}} f_k \dif x.
\end{equation}

\begin{lemma}\label{lem: balancing term bound}
It holds that
\begin{equation}
\limsup _{k\to \infty}  |\mc B_k|\log(1/r_k) \leq C.\label{Bk bound}
\end{equation}
\end{lemma}
\begin{proof}
We adapt \cite[Lemma 4.2]{LaurainRiviereEnergyQuantization}. 
Recall from \eqref{divergence form for balancing integrand} that
\[-\Div(\langle \nabla^\perp \vec L_k, \vec \Phi_k\rangle) = \Div(\langle \vec L_k, \nabla^\perp \vec \Phi_k\rangle ) = f_k.\]
In particular, using the divergence theorem, we see for any $r\in (2^3r_k, 2^{-3})$
\begin{equation}
\int _{B_r}f_k \dif{x} = \int _{\partial B_{r}} \left \langle \frac{1}{r}\partial _{\theta} \vec L_k(x),  \vec \Phi_k(x) \right \rangle \dif{l}(x)=\int _{\partial B_{r}} \left \langle \frac{1}{r}\partial _{\theta} \vec L_k(x),  \vec \Phi_k(x) - \vec \Phi_k(y)\right \rangle \dif{l}(x),\label{Using the divergence theorem on the L nabla Phi term}
\end{equation}
where $y \in \partial B_{r}$ is arbitrary. Here we used that that $\int _{\partial B_r} \partial_{\theta} \vec L_k \dif l= 0$. Denoting by $\pi_T$ the projection onto the tangent space and using that $\pi_T + \pi_{\vec n_k} = \id$, it follows from \eqref{definition L R and S} that
\begin{align}
\frac{1}{r} \partial _{\theta} \vec L_k &= -\frac{1}{2}\bigg(\pi_{T}(\partial_{r} \vec H_k)  - \frac{1}{r}\partial_\theta \vec n_k \times \vec H_k\bigg) \notag\\
&\quad \, +\pi_{\vec n_k}(\partial_r \vec H_k)\notag\\
&\quad \, - \alpha_k \partial_r \vec \Phi_k - \frac{\beta_k}{2} \vec \Phi_k \times \frac{1}{r}\partial_{\theta} \vec \Phi_k - \frac{\gamma_k}{2}(\partial_r \vec n_k + 2 H_k \partial_r \vec \Phi_k).\label{decomposition of gradient of L}
\end{align}
We may write $\vec \Phi_k(x) - \vec \Phi_k(y) = \int _{\Gamma_{y,x}} \frac{1}{r} \partial _{\theta} \vec \Phi_k \dif l$, where $\Gamma_{y,x}$ is the circular arc from $y$ to $x$. This shows in particular that $|\vec \Phi_k(x) - \vec \Phi_k(y)|\leq C r e^{\lambda_k(r)}$. It holds $|\pi_T(\partial_r \vec H_k)| = |H_k \partial_r \vec n_k| \leq |\nabla \vec n_k|^2 e^{-\lambda_k}$ by \eqref{e to lambda H and nabla n bound by delta} and so
\begin{equation}
\left |\int _{\partial B_{r}} \left \langle \pi_{T}(\partial_r \vec H_k)-\frac{1}{r} \partial_{\theta} \vec n_k \times \vec H_k,  \vec \Phi_k(x) - \vec \Phi_k(y)\right \rangle \dif{l}(x)\right | \leq C r^2 \delta_k(r)^2.\label{balancing lemma first and third term estimate}
\end{equation}
Furthermore, as $\vec n_k$ is orthogonal to $\partial_\theta \vec \Phi_k$, it holds
\begin{align*}
&\quad\, \left |\int _{\partial B_{r}} \left \langle \pi_{\vec n_k(x)}(\partial_r \vec H_k(x)),  \vec \Phi_k(x) - \vec \Phi_k(y)\right \rangle \dif{l}(x)\right | \\
&=\left |\int _{\partial B_{r}} \int _{\Gamma_{y,x}}\left \langle \pi_{\vec n_k(x)}(\partial_r \vec H_k(x)) -\pi_{\vec n_k(z)}(\partial_r \vec H_k(x)),  \frac{1}{r} \partial _{\theta} \vec \Phi_k(z) \right \rangle \dif l(z) \dif{l}(x)\right |. 
\end{align*}
As $\pi _{\vec n_k} = \vec n_k \otimes \vec n_k$, we have $|\nabla \pi_{\vec n_k} |\leq C |\nabla \vec n_k|$ and so $|\pi_{\vec n_k(x)}(\partial_r \vec H_k(x)) -\pi_{\vec n_k(z)}(\partial_r \vec H_k(x))| \leq C |\nabla \vec H_k|(x)  r \delta_k(r)$. Using also \eqref{nabla H pointwise estimate}, it follows
\begin{align}
\left |\int _{\partial B_{r}} \left \langle \pi_{\vec n_k(x)}(\partial_r \vec H_k(x)),  \vec \Phi_k(x) - \vec \Phi_k(y)\right \rangle \dif{l}(x)\right |&\leq C r^2 e^{\lambda_k(r)} \delta_k(r) \int _{\partial B_r}   |\nabla \vec H_k|  \dif l(x)\notag\\
&\leq C r^2 \delta_k(r) (\delta_k(r) + e^{\lambda_k(r)}).\label{balancing lemma second term estimate}
\end{align}
Finally, 
\begin{align}
&\quad \,\int _{\partial B_{r}} \left \langle - \alpha_k \partial_r \vec \Phi_k - \frac{\beta_k}{2} \vec \Phi_k \times \frac{1}{r}\partial_{\theta} \vec \Phi_k - \frac{\gamma_k}{2}(\partial_r \vec n_k + 2 H_k \partial_r \vec \Phi_k),  \vec \Phi_k(x) - \vec \Phi_k(y)\right \rangle \dif{l}(x) \notag\\
&\leq C r^2 e^{\lambda_k(r)} (e^{\lambda_k(r)} + \delta_k(r)).\label{balancing lemma fourth term estimate}
\end{align}
Inserting \eqref{balancing lemma first and third term estimate}, \eqref{balancing lemma second term estimate}, \eqref{balancing lemma fourth term estimate} into \eqref{Using the divergence theorem on the L nabla Phi term}, using \eqref{decomposition of gradient of L}, we conclude
\begin{equation}
\left |\frac{1}{r} \int _{B_r} f_k \dif x \right | \leq C r (\delta_k(r)^2 + e^{\lambda_k(r)} \delta_k(r) + e^{2\lambda_k(r)}).\label{good bound for integral fk}
\end{equation}
From \eqref{H L21 Lemma assumptions} and \eqref{VI 11 equation}, we deduce that the right-hand side is uniformly integrable in $r$ over $(2^3r_k, 2^{-3})$. In particular, we can find $s_{1,k}\in (2^3r_k, \sqrt{r_k})$ and $s_{2,k} \in (\sqrt{r_k},2^{-3})$ such that
\begin{equation}
\left | \int _{B_{s_{1,k}}} f_k \dif x\right |+\left | \int _{B_{s_{2,k}}} f_k \dif x\right | \leq \frac{C}{\log(1/r_k)}.\label{s1 and s2 integral}
\end{equation}
If $d>-1+\delta$, then $d_k>-1 + \delta/2$ for sufficiently large $k$ and so the bound for $\vec \Phi_k$, $\mc A(\vec \Phi_k)$, and $\mc W(\vec \Phi_k)$ together with Cauchy-Schwarz implies
\begin{equation}
\int _{B_{\sqrt{r_k}}\setminus B_{2^3r_k}} |f_k| \dif x \leq  C \sqrt{\mc A(\vec \Phi_k \vert _{B_{\sqrt{r_k}}\setminus B_{2^3r_k}})} \leq C \left (\int _{2^3r_k}^{\sqrt{r_k}} e^{2A_k}s^{2d_k+1} \dif{s} \right)^{1/2} \leq C r_k ^{\delta/4}.\label{4rk to sqrt rk integral fk}
\end{equation}
As $r_k^{\delta/4}$ is controlled by $1/\log(1/r_k)$, \eqref{4rk to sqrt rk integral fk} and \eqref{s1 and s2 integral} show \eqref{Bk bound}. Likewise, if $d<-1-\delta$, we use $s_{2,k}$ instead of $s_{1,k}$ to prove \eqref{Bk bound}.
\end{proof}
We can now proceed with the proof of \Cref{lem: L21 control mean curvature}.
\subsubsection*{Part 4. Controlling  \texorpdfstring{$\nabla Y_k$}{nabla Y}}
Recall the quantities $\vec X_k$, $\vec R_k$, $ S_k$ and $Y_k$ from \Cref{subsec:conservative system for constrained equation}, defined on the simply connected domain $\Omega = B_{2^{-3}}$. In particular, $Y_k$ was defined in \eqref{definition Y} as the solution to
\begin{equation}
\begin{cases}
\Delta Y_k =  f_k&\quad \text{in }B_{2^{-3}}, \\
\;\;\;Y_k=0& \quad \text{on } \partial B_{2^{-3}}.
\end{cases}\label{Recalling Yk}
\end{equation}
We also define the corresponding reflected quantities $\hat{\vec{\Phi}}_k$, $\hat{\vec{X}}_k$, $\hat{\vec{R}}_k$, $\hat{S}_k$, and $\hat{Y}_k$ as in \eqref{reflected quantities}. \Cref{rem:conservative system for reflected immersion} yields that these still satisfy the conservative system \eqref{conservative system} and satisfy the same equations \eqref{definition L R and S} and \eqref{alternative system for S and R}. In the following, we will first assume that $d>-1+\delta$. In this case, we will work with the original immersions $\vec \Phi_k$ and finish the proof in this case. In the end, see Part \hyperref[part 8]{8} of the proof, we will consider the case $d<-1-\delta$ and work with the reflected quantities. We adapt the necessary arguments in a suitable way.
\subsubsection*{Case 1: \texorpdfstring{$d>-1+\delta$}{d>-1+delta}}
It holds $d_k > -1+\delta/2$ for sufficiently large $k$. \eqref{Ak remain bounded} shows that $A_k\leq C$ and so \eqref{logarithmic estimate conformal factor} yields
\begin{equation}
e^{\lambda_k(x)} \leq C |x|^{\delta/2-1} \label{conformal factor bounded by positive exponent}
\end{equation}
for all $x\in B_{1/4}\setminus B_{4r_k}$. This implies the uniform estimate
\begin{equation}
\sup_{k\in \N}\|e^{\lambda_k}\|_{L^{p_1}(B_{1/2}\setminus B_{2r_k})}<C \label{H L21 lemma L^p estimate}
\end{equation}
for $p_1=p_1(\Lambda)\cqq \frac{\delta/2 - 2}{\delta/2 -1}>2$.
The next goal will be to show that we can obtain $L^{2,1}$-estimates for $\nabla Y_k$, see \eqref{strong nabla Yk estimate}. Notice that by \cite[Theorem 3.3.6]{Helein} and \eqref{H L21 Lemma assumptions}, we have
\begin{equation}
\|\nabla Y_k\|_{L^{2,\infty}(B_{1/4})}\leq C \|f_k\|_{L^1(B_{1/4})}\leq C.\label{L2 weak estimate for Yk}
\end{equation}
 We define $Y^1_k$ to be the solution of
\begin{equation}
\begin{cases}
\Delta Y^1_k = \chi_{B_{2^{-3}}\setminus B_{2^3r_k}} f_k&\quad \text{in }B_{1/4} , \\
\;\;\;Y^1_k=0& \quad \text{on } \partial B_{2^{-3}} .
\end{cases}\label{Yk alternative definition}
\end{equation}
%In Case 2, we define instead $\hat{Y^1}_k(x)=Y^1_k\left (r_k \frac{x}{|x|^2}\right )$, which solves 
%\begin{alignat}{2}
%\Delta \hat{Y^1}_k &= \chi_{B_{1/4}\setminus B_{4r_k}} e^{2\hat{\lambda}_k}(-2\alpha_k - \gamma_k \hat{H}_k + \beta_k \hat{\vec{ \Phi}}_k \cdot \hat{\vec{n}}_k),&\quad &\text{in }\C\setminus B_{4r_k}  \label{Yk alternative definition in reflected case}\\
%Y^1_k&=0,& \quad &\text{on } \partial B_{4r_k} .
%\end{alignat}
Notice that
\begin{equation}
\int _{\partial B_{2^3r_k}} \partial_r Y^1_k \dif{l} = \int _{B_{2^3r_k}} \Delta Y^1_k \dif{x} = 0.\label{Neumann term for tilde Yk}
\end{equation}
We obtain from \eqref{conformal factor bounded by positive exponent}, \eqref{Jensen in Lemma}, \eqref{e to lambda H and nabla n bound by delta}, \eqref{delta_k bound}, and \eqref{H L21 Lemma assumptions} together with $d_k \to d > -1+\delta$ that
\begin{equation}
|f_k| \leq C|x|^{\delta/2 - 2} \quad \text{for $x\in B_{2^{-3}}\setminus B_{2^3r_k}$} \label{Y_k Laplacian estimate}
\end{equation}
and
\begin{equation}
\|f_k\|_{L^{p_2}(B_{2^{-3}}\setminus B_{2^3r_k})} \leq C\label{fk Lp estimate}
\end{equation}
uniformly for $p_2=p_2(\Lambda) \cqq \frac{-2 + \delta/4}{-2 + \delta/2}>1$ depending only on $\Lambda$. Calderon-Zygmund estimates and standard Sobolev embeddings imply
\begin{equation}
\|Y^1_k\|_{L^\infty(B_{2^{-3}})}+\|\nabla Y^1_k\|_{L^{\frac{2p_2}{2-p_2}}(B_{2^{-3}})}\leq C \|Y^1_k\|_{W^{2,p_2}(B_{2^{-3}})}\leq C.\label{pointwise Yk tilde estimate}
\end{equation}
Using \cite[Theorem 3.9]{GilbargTrudinger} on the annulus $B_{2|x|} \setminus B_{|x|/2} $, \eqref{Y_k Laplacian estimate} and \eqref{pointwise Yk tilde estimate} give
\begin{equation}
|x| |\nabla Y^1_k(x)| \leq C\label{Yk L2 weak estimate}
\end{equation}
for $x\in B_{2^{-4}} \setminus B_{2^{4}r_k} $. 

The difference $u_k \cqq Y_k - Y^1_k$ satifies $\Delta u_k = 0$ on $B_{2^{-3}}\setminus B_{2^3r_k}$. By the classification of harmonic maps on annuli, we get (in complex notation)
\begin{equation}
u_k(z)=l_k \log |z| + \Re \sum_{n=-\infty}^\infty a_{n,k} z^n\label{harmonic representation on annulus}
\end{equation}
for coefficients $l_k\in  \R$, $a_{n,k} \in \C$. It follows directly by \eqref{Neumann term for tilde Yk}
\[2\pi l_k= \int _{\partial B_{2^3r_k}} \partial_r (Y_k - Y^1_k) \dif{l} = \int_{ \partial B_{2^3r_k} } \partial_r Y_k \dif{l} = \int _{B_{2^3r_k}} f_k \dif x. \]
Together with \eqref{4rk to sqrt rk integral fk} and \eqref{Bk bound}, we see
\begin{equation}
|l_k| \leq \frac{C}{\log(1/r_k)} \quad\text{for all $k\in \N$.}\label{bound for lk}
\end{equation}
In particular, \eqref{bound for lk} together with \eqref{L21 norm for gradient of log} yields that $\|\nabla (l_k \log |\cdot|)\|_{L^{2,1}(B_{2^{-4}}\setminus B_{2^{4}r_k})}\leq C$. Using \Cref{lem:harmonic function L2 weak to L2 strong} applied to the harmonic map $u_k(x) - l_k \log |x|$ together with \eqref{L2 weak estimate for Yk} and \eqref{pointwise Yk tilde estimate} yield
\begin{align}
\|\nabla u_k \|_{L^{2,1}(B_{2^{-4}}\setminus B_{2^{4}r_k})}&\leq |l_k| \|\nabla \log |\cdot|\|_{L^{2,1}(B_{2^{-4}}\setminus B_{2^{4}r_k})} + C\|\nabla(u_k - l_k \log |\cdot|)\|_{L^{2,\infty}(B_{2^{-4}}\setminus B_{2^{4}r_k})}  \notag\\
&\leq C(1+\|\nabla Y_k\|_{L^{2,\infty}(B_{2^{-4}}\setminus B_{2^{4}r_k})} + \|\nabla Y^1_k\|_{L^{2,\infty}(B_{2^{-4}}\setminus B_{2^{4}r_k})} + |l_k| )\notag\\
& \leq C.  \label{strong harmonic estimate for uk}
\end{align}
As $u_k$ is bounded on $\partial B_{2^{-3}}$ and $|l_k|\leq C$, it holds $\dashint _{\partial B_{2^{-4}}} u_k \dif l \leq C$. Thus, \cite[Proposition 2.7]{MichelatPointwiseExpansion} implies 
\begin{equation}
\|u_k\|_{L^\infty(B_{2^{-4}}\setminus B_{2^{4}r_k})} \leq C. \label{uk bounded}
\end{equation}
Applying again \cite[Theorem 3.9]{GilbargTrudinger} to $u_k$ shows $|x| |\nabla u_k(x)| \leq C$ on $B_{2^{-5}}\setminus B_{2^5r_k}$. Together with \eqref{Yk L2 weak estimate},
\begin{equation}
|x| |\nabla Y_k(x)| \leq C\quad \text{for all }x\in  B_{2^{-5}} \setminus B_{2^{5}r_k} . \label{pointwise Yk estimate}
\end{equation}
As the $L^{\frac{2p_2}{2-p_2}}$-norm bounds the $L^{2,1}$-norm on bounded domains (see \eqref{Lp L21 inequality}), \eqref{pointwise Yk tilde estimate} and \eqref{strong harmonic estimate for uk} imply
\begin{align}
\|\nabla Y_k\|_{L^{2,1}(B_{2^{-4}}\setminus B_{2^{4}r_k})}\leq C(\|\nabla Y^1_k\|_{L^{\frac{2p_2}{2-p_2}}(B_{2^{-4}}\setminus B_{2^{4}r_k})} + \|\nabla u_k\|_{L^{2,1}(B_{2^{-4}}\setminus B_{2^{4}r_k})}) \leq C.  \label{strong nabla Yk estimate}
\end{align}
\subsubsection*{Part 5. \texorpdfstring{$L^2$}{L^2} estimates for \texorpdfstring{$\nabla S_k$}{nabla S_k} and \texorpdfstring{$\nabla \vec R_k$}{nabla R_k}}
Notice that $\vec X_k = \frac{\beta_k}{4} |\vec \Phi_k|^2 \nabla^\perp \vec \Phi_k$ satisfies $\|\vec X_k\|_{L^{p_1}(B_{2^{-4}}\setminus B_{2^{4}r_k})}\leq C$ by the $L^\infty$-estimate for $\vec \Phi_k$ \eqref{H L21 Lemma assumptions} and \eqref{H L21 lemma L^p estimate}. \eqref{Jensen in Lemma} shows
\begin{equation}
|x| |\vec{X}_k|(x) \leq C\quad \text{for all }x\in B_{2^{-4}} \setminus B_{2^{4}r_k} .\label{Xk pointwise estimate}
\end{equation}
Hence, using \eqref{definition L R and S} together with \eqref{delta_k bound}, \eqref{e to lambda H and nabla n bound by delta}, \eqref{Lk pointwise estimate}, \eqref{pointwise Yk estimate}, and \eqref{Xk pointwise estimate}, we conclude
\begin{equation}
|\nabla S_k(x)| + |\nabla \vec R_k(x)| \leq \frac{C}{|x|} \label{pointwise nabla Sk and nabla Rk bound}
\end{equation}
for all $x\in B_{2^{-5}} \setminus B_{2^{5}r_k}$
and in particular by \eqref{L2infty norm for gradient of log}
\begin{equation}
\|\nabla S_k\|_{L^{2,\infty}(B_{2^{-5}} \setminus B_{2^{5}r_k})}+\|\nabla   \vec R_k\|_{L^{2,\infty}(B_{2^{-5}} \setminus B_{2^{5}r_k})} \leq C.\label{L2 weak bound nabla Sk and nabla Rk}
\end{equation}
Recall from \eqref{alternative system for S and R} that it holds
\begin{equation}
\begin{aligned}
\nabla S_k &= - \langle \nabla ^\perp \vec R_k, \vec n_k \rangle + \nabla ^\perp Y_k,\\
\nabla \vec R_k &= \vec n_k \times \nabla ^\perp \vec R_k + (\nabla ^\perp S_k + \nabla Y_k ) \vec n_k.
\end{aligned} \label{alternative versions for nabla S and nabla R}
\end{equation}
We set 
\[S_{k,r} \cqq \dashint_{\partial B_r } S_k \dif{l}, \quad \vec R_{k,r} \cqq \dashint _{\partial B_r } \vec R_k \dif{l}, \quad \vec n_{k,r} \cqq \dashint _{\partial B_r } \vec n_k \dif{l}.\]
Then it follows from \eqref{alternative versions for nabla S and nabla R}
\begin{align}
\frac{\dd S_{k,r}}{\dd r} &= \dashint_0^{2\pi} \partial _r S_k (r,\theta) \dif{\theta} = \dashint _0^{2\pi} \left \langle \frac{1}{r}\partial _{\theta}\vec R_k   , \vec n_k - \vec n_{k,r}\right \rangle \dif{\theta}, \label{calculation for estimate of S}\\
\frac{\dd \vec R_{k,r}}{\dd r} &= \dashint _0^{2\pi} \partial_r\vec R_k (r,\theta) \dif{\theta} \notag\\
&= \dashint_0^{2\pi} -(\vec n_k - \vec n_{k,r}) \times \frac{1}{r}\partial_{\theta} \vec R_k - \frac{1}{r} \partial_{\theta} S_k (\vec n_k - \vec n_{k,r}) \dif{\theta} + \dashint_0^{2\pi} \partial _{r}Y_k   \vec n _k  \dif{\theta}. \label{calculation for estimate of R}
\end{align}
Using the pointwise bound for $\nabla \vec n_k$ from \eqref{pointwise nabla n bound}, it holds
\begin{equation}
|\vec n_k(x) - \vec n_{k,|x|}| \leq  \int _{\partial B_{|x|}} |\nabla \vec n_k | \dif{l} \leq C \delta_k(|x|) |x|.\label{estimate of n minus average}
\end{equation}
In particular, \eqref{calculation for estimate of S} and \eqref{calculation for estimate of R} show
\begin{align}
\left | \frac{\dd \vec R_{k,r}}{\dd r}\right |& \leq C \delta_k(r) \int _{\partial B_r} (|\nabla \vec R_k| + |\nabla S_k|)\dif{l} + \frac{1}{2\pi} \int_{\partial B_r} \frac{1}{r} \left | \partial_r Y_k \right | \dif{l} , \label{derivative averages estimate for R}\\
\left | \frac{\dd S_{k,r}}{\dd r} \right | &\leq C \delta_k(r) \int _{\partial B_r} |\nabla \vec R_k|\dif{l}.\label{derivative averages estimate for S}
\end{align}
\eqref{pointwise nabla Sk and nabla Rk bound}, \eqref{calculation for estimate of S}, \eqref{calculation for estimate of R}, and \eqref{estimate of n minus average} imply
\begin{equation}
\left | \frac{\dd S_{k,r}}{\dd r}\right |^2 + \left | \frac{\dd \vec R_{k,r}}{\dd r} \right |^2 \leq C \delta_k(|x|)^2 + C \int _{0}^{2\pi} \left | \partial_r Y_k\right |^2\dif{\theta}.\label{derivative averages squared estimate}
\end{equation}
Integrating \eqref{derivative averages squared estimate} over $(2^{4}r_k, 2^{-4})$ and using \eqref{VI 11 equation} and \eqref{strong nabla Yk estimate} gives
\begin{equation}
\int _{2^4 r_k}^{2^{-4}}\left [\left | \frac{\dd S_{k,r}}{\dd r}\right |^2 + \left | \frac{\dd \vec R_{k,r}}{\dd r} \right |^2 \right ] r \dif{r} \leq C. \label{average L2 integral for R and S bounded}
\end{equation}
Consider $\vec R^1_k$ and $\vec R^2_k \cqq  \vec R_k - \vec R^1_k  $ satisfying
\begin{equation}
\begin{cases}
\Delta \vec R^1_k = \Div\left ( \nabla Y_k  \vec n_k\right )&\quad \text{in $B_{2^{-5}}\setminus B_{2^{5}r_k}$},\\
\;\;\; \vec R^1_k = 0 &\quad \text{on $\partial (B_{2^{-5}}\setminus B_{2^{5}r_k})$},\\
\Delta \vec R^2_k = \nabla^\perp \vec n_k \cdot \nabla S_k + \nabla^ \perp \vec n_k \times \nabla \vec R_k &\quad \text{in $B_{2^{-5}}\setminus B_{2^{5}r_k}$},\\
\;\;\; \vec R^2_k = \vec R_k &\quad \text{on $\partial (B_{2^{-5}}\setminus B_{2^{5}r_k})$},
\end{cases}\label{R1k auxiliary solution}
\end{equation}
where the equation for $\vec R^2_k$ follows from \eqref{conservative system}. With \eqref{strong nabla Yk estimate}, we can apply \Cref{lem:L21 estimate for Poisson in divergence form} \ref{lem:L21 estimate Poisson first part} to $\vec R^1_k$ to obtain
\begin{equation}
\|\vec R^1_k\|_{L^\infty(B_{2^{-5}}\setminus B_{2^{5}r_k})} + \|\nabla \vec R^1_k\|_{L^{2,1}(B_{2^{-7}}\setminus B_{2^{7}r_k})} \leq C.\label{strong L21 estimates for R1k}
\end{equation}
Notice that \eqref{average L2 integral for R and S bounded} and \eqref{strong L21 estimates for R1k} give
\begin{equation}
\int _{2^{7}r_k}^{2^{-7}} \left | \frac{\dd \vec R^2_{k,r}}{\dd r}\right |^2 r \dif{r} \leq C, \label{average L2 integral for R2k}
\end{equation}
where $\vec R^2_{k,r} = \dashint _{\partial B_r} \vec R^2_k \dif{l}$ again denotes the average. We apply \cite[Lemma 10]{LaurainRiviereAngularQuantization} to $\vec R^2_k$ and $S_k$, using \eqref{average L2 integral for R2k} and \eqref{R1k auxiliary solution} for $\vec R^2_k$ and \eqref{average L2 integral for R and S bounded} and \eqref{conservative system} for $S_k$ to obtain
\begin{equation}
\|\nabla S_k\|_{L^{2}(B_{2^{-6}} \setminus B_{2^{6}r_k} )} + \|\nabla \vec R^2_k\|_{L^{2}(B_{2^{-8}} \setminus B_{2^{8}r_k} )} \leq C.\label{L2 estimate for S and R}
\end{equation}
In particular, \eqref{strong L21 estimates for R1k}, \eqref{L2 estimate for S and R}, and \eqref{pointwise nabla Sk and nabla Rk bound} show
\begin{equation}
\|\nabla S_k\|_{L^{2}(B_{2^{-5}} \setminus B_{2^{5}r_k} )} +\|\nabla \vec R_k\|_{L^2(B_{2^{-5}} \setminus B_{2^{5}r_k} )} \leq C.\label{L2 estimates for nabla Rk and nabla Sk}
\end{equation}
\subsubsection*{Part 6. \texorpdfstring{$L^{2,1}$}{L^(2,1)}-estimates for \texorpdfstring{$\nabla S_k$}{nabla S_k} and \texorpdfstring{$\nabla \vec R_k$}{nabla R_k}}
Estimating the right-hand side in \eqref{derivative averages estimate for R} and \eqref{derivative averages estimate for S} with the help of \eqref{L2 estimate for S and R}, Fubini's theorem, \eqref{VI 11 equation}, the duality between $L^{2,\infty}$ and $L^{2,1}$, \eqref{strong nabla Yk estimate}, \eqref{L2infty norm for gradient of log}, and an application of Cauchy-Schwarz gives 
\begin{align}
\int _{2^5 r_k} ^{2^{-5}} \left | \frac{\dd \vec R_{k,r}}{\dd r} \right | \dif{r} &\leq \int _{2^5 r_k} ^{2^{-5}}  \int _{\partial B_r} C \delta_k(r)(|\nabla \vec R_k| + |\nabla S_k|) + \frac{1}{2\pi r} \left | \partial_r Y_k  \right | \dif{l} \dif r \notag\\
& \leq \left (\int _{2^5 r_k} ^{2^{-5}} r \delta_k^2(r) \dif{r}\right )^{1/2} \left ( \|\nabla S_k\|_{L^{2}(B_{2^{-5}}\setminus B_{2^5 r_k})} + \|\nabla \vec R_k\|_{L^{2}(B_{2^{-5}}\setminus B_{2^5 r_k})}\right )\notag\\
&\quad \, + \left \| \frac{1}{2\pi |x|}\right \|_{L^{2,\infty}(B_{2^{-5}}\setminus B_{2^5 r_k})}\left \| \partial_r Y_k  \right \|_{L^{2,1}(B_{2^{-5}}\setminus B_{2^5 r_k})}  \notag\\
&\leq C ,\label{modified supremum estimate for R}\\ 
\int _{2^5 r_k} ^{2^{-5}} \left | \frac{\dd S_{k,r}}{\dd r}\right | \dif{r} &\leq \int _{2^5 r_k} ^{2^{-5}}  \int _{\partial B_r} C \delta_k(r)(|\nabla \vec R_k|  ) \dif{l} \dif{r} \leq C. \label{modified supremum estimate for S}
\end{align}
By shifting $S_k$ and $\vec R_k$, we may assume $S_{k,2^5r_k} = \vec R_{k, 2^5r_k} = 0$. Together with the pointwise bound \eqref{pointwise nabla Sk and nabla Rk bound}, \eqref{modified supremum estimate for R} and \eqref{modified supremum estimate for S} imply
\begin{equation}
\|S_k\|_{L^\infty(B_{2^{-5}}\setminus B_{2^{5}r_k})} + \|\vec R_k\|_{L^\infty(B_{2^{-5}}\setminus B_{2^{5}r_k})} \leq C. \label{L infinity estimate for S and R}
\end{equation}
\eqref{L infinity estimate for S and R} and \eqref{strong L21 estimates for R1k} also show that $\|\vec R^2_k\|_{L^{\infty}(B_{2^{-5}}\setminus B_{2^{5}r_k})} \leq C$. We are now in the situation to apply \cite[Lemma 9]{LaurainRiviereAngularQuantization} to $S_k$ and $\vec R^2_k$, which shows 
\begin{equation}
\|\nabla S_k\|_{L^{2,1}(B_{2^{-6}}\setminus B_{2^{6}r_k})}+\|\nabla \vec R^2_k\|_{L^{2,1}(B_{2^{-6}}\setminus B_{2^{6}r_k})}\leq C.\label{strong L21 estimate for S and R2k}
\end{equation}
Finally, \eqref{strong L21 estimate for S and R2k} and \eqref{strong L21 estimates for R1k} show that
\begin{equation}
\|\nabla S_k\|_{L^{2,1}(B_{2^{-7}}\setminus B_{2^{7}r_k})}+\|\nabla \vec R_k\|_{L^{2,1}(B_{2^{-7}}\setminus B_{2^{7}r_k})}\leq C.\label{strong L21 estimate for S and Rk}
\end{equation}
\subsubsection*{Part 7. Conclusion in the first case}
We use $2 e^{2\lambda_k} \vec H_k = \Delta \vec \Phi_k$ together with \eqref{conservative system} to see that
\begin{align}
2 e^{2\lambda_k} \vec H_k &= \Delta \vec \Phi_k = -  \nabla^\perp S_k\cdot \nabla \vec \Phi_k - \nabla^\perp \vec R_k \times \nabla \vec \Phi_k -  \nabla \vec \Phi_k \cdot \nabla Y_k - \frac{\beta_k}{2} |\vec \Phi_k|^2 e^{2\lambda_k} \vec n_k. \label{putting everything together}
\end{align}
We divide both sides by $e^{\lambda_k}$. Using \eqref{strong L21 estimate for S and Rk}, \eqref{strong nabla Yk estimate}, \eqref{H L21 lemma L^p estimate} and \eqref{H L21 Lemma assumptions}, we conclude
\[\|e^{\lambda_k} \vec H_{k}\|_{L^{2,1}(B_{2^{-7}}\setminus B_{2^{7}r_k})} \leq C.\]
%
%
%
%
%
%
%\eqref{L infinity estimate for S and R} and \eqref{strong L21 estimates for R1k} also show that $\|\vec R^3_k\|_{L^\infty(B_{2^{-4}}\setminus B_{2^{4}r_k})} \leq C$. We are now in the situation to apply \cite[Lemma VI.3]{BernardRiviere} to $S_k$ and $\vec R^3_k$, which shows 
%\begin{equation}
%\|\nabla S_k\|_{L^{2,1}(B_{2^{-5}}\setminus B_{2^{5}r_k})} + \|\nabla \vec R^3_k\|_{L^{2,1}(B_{2^{-7}}\setminus B_{2^{7}r_k})}\leq C.\label{strong L21 estimate for S and R3k}
%\end{equation}
%Finally, \eqref{strong L21 estimates for R1k} and \eqref{strong L21 estimate for S and R3k} show that
%\begin{equation}
%\|\nabla S_k\|_{L^{2,1}(B_{2^{-5}}\setminus B_{2^{5}r_k})} + \|\nabla \vec R_k\|_{L^{2,1}(B_{2^{-7}}\setminus B_{2^{7}r_k})}\leq C.\label{strong L21 estimate for S and Rk}
%\end{equation}
%Recall now from \eqref{conservative system} that
%\[2 e^{2\lambda} \vec H_k = \Delta \vec \Phi_k = -\langle \nabla^\perp S_k \nabla \vec \Phi_k\rangle - \nabla^\perp \vec R_k \times \nabla \vec \Phi_k -   \nabla \vec \Phi_k\cdot \nabla Y_k - \frac{\beta_k}{2} |\vec \Phi_k|^2 e^{2\lambda_k} \vec n_k.\]
%Using \eqref{strong L21 estimate for S and Rk}, \eqref{strong nabla Yk estimate}, \eqref{H L21 lemma L^p estimate} and \eqref{H L21 Lemma assumptions}, we see that
%\[\|e^{\lambda_k} \vec H_{k}\|_{L^{2,1}(B_{2^{-7}}\setminus B_{2^{7}r_k})} \leq C.\]
This finishes the proof in the case where $d>-1+\delta$. Let us now consider the case $d<-1-\delta$ where we work with the reflected immersion $\hat{\vec{\Phi}}_k$. The main issue is that we need to show $L^{2,\infty}$-bounds for $\nabla \hat Y_k$, where $\hat Y_k(z) = Y_k(r_k/z)$. We cannot directly apply \cite[Theorem 3.3.6]{Helein} as in Case 1, as the domain is now unbounded.
\subsubsection*{Part 8. Adressing the second case}\label{part 8}
We now assume that $d<-1-\delta$. As before, the quantities in the inverted setting are denoted with a hat. We set $\hat f_k \cqq e^{2 \hat\lambda_k}(-2\alpha_k  + \beta_k \langle  \hat{\vec{\Phi}}_k, \hat{\vec{n}}_k\rangle- \gamma_k  \hat H_k)$ which satisfies
\begin{equation}
\hat f_k(z) = \frac{r_k^2}{|z|^4} f_k(r_k/z).\label{reflected rhs for Y}
\end{equation}
We define $\hat Y^2_k$ to be the solution to
\begin{equation}
\begin{cases}
\Delta \hat Y^2_k =  \hat f_k&\quad \text{in }B_{2^{-3}}\setminus B_{2^3r_k}, \\
\;\;\;\hat Y^2_k=0& \quad \text{on } \partial (B_{2^{-3}}\setminus B_{2^3r_k}),
\end{cases} \label{hat Y1k definition}
\end{equation} 
Notice that as $d<-1-\delta$, $\hat f_k$ now satisfies the uniform $L^{p_2}$-estimates
\begin{equation}
\|\hat f_k\|_{L^{p_2}(B_{2^{-3}}\setminus B_{2^3r_k})}\leq C,\label{hat fk Lp estimate}
\end{equation}
which is the analogue to \eqref{fk Lp estimate}. As in \Cref{lem:L21 estimate for Poisson in divergence form} \ref{lem:L21 estimate Poisson second part}, see in particular \eqref{L2 estimate for phi2}, it holds
\begin{equation}
\|\nabla \hat Y^2_k\|_{L^2(B_{2^{-3}})} \leq C \|\hat f_k\|_{L^{p_2}(B_{2^{-3}}\setminus B_{2^3r_k})}\leq C.\label{nabla hat Y1k L2 estimate 2}
\end{equation}
%We define $\hat Y^2_k$ to solve
%\begin{equation}
%\begin{cases}
%\Delta \hat Y^2_k =  \chi _{B_{1/4}\setminus B_{4r_k}}e^{2 \hat\lambda_k}(-2\alpha_k - \gamma_k  \hat H_k + \beta_k \langle  \hat{\vec{\Phi}}_k, \hat{\vec{n}}_k\rangle)&\quad \text{in }B_{1/4}, \\
%\;\;\;\hat Y^2_k=0& \quad \text{on } \partial B_{1/4},
%\end{cases} \label{Y2k definition}
%\end{equation} 
%Then standard Calderon-Zygmund theory yields
%\[\|\hat Y^2_k \|_{L^\infty(B_{1/4})} + \|\nabla \hat Y^2_k\|_{L^{\frac{2p}{2-p}}(B_{1/4})} \leq C.\]
%The harmonic rest $\hat u^1_k = \hat Y^1_k - \hat Y^2_k$ on $B_{1/4}\setminus B_{4r_k}$ satisfies
%\[\begin{cases}
%\Delta \hat u^1_k = 0 \quad &\text{in $B_{1/4}\setminus B_{4r_k}$},\\
%\hat u^1_k = 0&\text{on $\partial B_{1/4}$},
%\end{cases}\quad \|\hat u^1_k\|_{L^\infty(\partial B_{4r_k})} \leq C,\quad \|\nabla \hat u^1_k\|_{L^2(B_{1/4}\setminus B_{4r_k})}\leq C.  \]
%Analogously to \Cref{lem:L21 estimate for Poisson in divergence form}, we obtain
%\[\|\hat u^1_k\|_{L^\infty(B_{1/4}\setminus B_{4r_k})} + \|\nabla \hat u^1_k\|_{L^{2,1}(B_{2^{-5}}\setminus B_{2^5r_k})}\leq C(1+ \|\nabla \hat u^1_k\|_{L^2(B_{1/4}\setminus B_{4r_k})})\leq C.\]
%It follows
%\[\|\nabla Y^1_k\|_{L^{2,1}(B_{2^{-5}}\setminus B_{2^5r_k})} + \|Y^1_k\|_{L^\infty((B_{1/4}\setminus B_{4r_k})} \leq C.\]
We define $Y^2_k(z) = \hat Y^2_k(r_k/z)$, which then solves
\begin{equation}
\begin{cases}
\Delta Y^2_k =   f_k&\quad \text{in }B_{2^{-3}}\setminus B_{2^3r_k}, \\
\;\;\; Y^2_k=0& \quad \text{on } \partial (B_{2^{-3}}\setminus B_{2^3r_k}).
\end{cases} \label{Y1k definition}
\end{equation} 
As the Dirichlet energy is conformally invariant, we obtain from \eqref{nabla hat Y1k L2 estimate 2}
\begin{equation}
\|\nabla Y^2_k\|_{L^{2}(B_{2^{-3}}\setminus B_{2^3r_k})} = \|\nabla \hat Y^2_k\|_{L^{2}(B_{2^{-3}}\setminus B_{2^3r_k})}\leq C.\label{nabla Y1k L2 estimate}
\end{equation}
Recall from \eqref{L2 weak estimate for Yk} that $Y_k$ defined in \eqref{Recalling Yk} satisfies
\[\|\nabla Y_k\|_{L^{2,\infty}(B_{2^{-3}})} \leq C \|f_k\|_{L^1(B_{2^{-3}})} \leq C.\]
Let $v_k \cqq Y_k - Y^2_k$, which thus satisfies
\begin{equation}
\begin{cases}
\Delta v_k = 0\quad &\text{in $B_{2^{-3}}\setminus B_{2^3r_k}$},\\
\;\;\;v_k = 0 \quad&\text{on $\partial B_{2^{-3}}$},
\end{cases}  \quad \|\nabla v_k\|_{L^{2,\infty}(B_{2^{-3}}\setminus B_{2^3r_k})}\leq C.
\end{equation}
The uniform bound $\|f_k\|_{L^1(B_1)}\leq C$ implies for $\rho \in (2^3r_k, 2^{-3})$
\begin{equation}
\left |\int _{\partial B_{\rho}} \partial_r Y_k \dif{l}\right | =\left | \int _{B_{\rho}} f_k \dif{x}\right | \leq C.\label{line integral Yk}
\end{equation}
Notice further that since $Y^2_k = 0$ on $\partial B_{2^{-3}}$ and $\partial B_{2^3r_k}$, there is some $\rho_k \in (2^3r_k, 2^{-3})$ such that
\begin{equation}
 0 = \partial_\rho \bigg\vert _{\rho_k} \dashint_{\partial B_{\rho}} Y^2_k \dif{l} =  \dashint_{\partial B_{\rho_k}} \partial_r Y^2_k \dif{l} = \int_{\partial B_{\rho_k}} \partial_r Y^2_k \dif{l}.\label{mean value argument for line integral}
\end{equation}
From \eqref{line integral Yk} and \eqref{mean value argument for line integral}, we see that $\eta_k \cqq \frac{1}{2\pi}\int _{\partial B_{\rho_k}} \partial_r v_k \dif{l}$ satisfies $|\eta_k|\leq C$. Using \Cref{lem:harmonic function L2 weak to L2 strong} applied to $v_k(x) - \eta_k \log |x|$, we see
\begin{equation}
\left \|\nabla \left (v_k - \eta_k \log |\cdot| \right )\right \|_{L^2(B_{2^{-6}}\setminus B_{2^6 r_k})}\leq C\left \|\nabla \left (v_k - \eta_k \log |\cdot | \right )\right \|_{L^{2,\infty}(B_{2^{-3}}\setminus B_{2^3r_k})}\leq C.\label{L2 estimate for harmonic part uk minus log term}
\end{equation}
Using again the conformal invariance of the Dirichlet energy and the fact that the $L^{2,\infty}$-norm of the gradient of the logarithm is invariant under reflections, we deduce for the reflected harmonic map $\hat v_k(z) = v_k(r_k/z)$ that
\begin{equation}
\| \nabla \hat v_k\|_{L^{2,\infty}(B_{2^{-6}}\setminus B_{2^6 r_k})} \leq C\| \nabla \log |\cdot |\|_{L^{2,\infty}(B_{2^{-3}}\setminus B_{2^3r_k})} + \left \|\nabla \left (v_k - \eta_k \log |\cdot | \right )\right \|_{L^2(B_{2^{-6}}\setminus B_{2^6 r_k})} \leq C.\label{nabla hat uk L2 estimate}
\end{equation}
Finally, as $v_k = Y_k - Y^2_k$, it follows that $\hat Y_k = \hat Y^2_k + \hat v_k$, and so we obtain
\begin{equation}
\|\nabla \hat Y_k\|_{L^{2,\infty}(B_{2^{-6}}\setminus B_{2^6 r_k})} \leq C.\label{nabla hat Yk L2 weak estimate}
\end{equation}
This is the equivalent to \eqref{L2 weak estimate for Yk}. From here, the rest of the proof can be finished with the same arguments as in the first case, starting from \eqref{Yk alternative definition}. Notice that
\[\int _{\partial B_{\rho}} \partial _r \hat Y_k \dif l = - \int _{\partial B_{\frac{r_k}{\rho}}} \partial_r Y_k\dif l\]
and so in particular $\int _{\partial B_{\sqrt{r_k}}} \partial_r \hat Y_k \dif l = -\mc B_k$. Thus, we can prove \eqref{l21 estimate for H} for $\hat{\vec{\Phi}}_k$ analogously.
\end{proof}

\section{\texorpdfstring{$L^2$}{L2}-weak quantization of the Gauss map in neck regions}\label{sec:L2weak quantization in neck regions}

\begin{lemma}[$L^2$-weak energy quantization for the Gauss map in neck regions]\label{lem: L2 weak estimate for gauss map annulus has small energy condition}
Let $\Lambda > 0$. There exists $\eps_4 = \eps_4(\Lambda)>0$ depending only on $\Lambda$ with the following property. Let $\vec \Phi_k$ be a sequence of conformal constrained Willmore immersions with coefficients $\alpha_k,\beta_k$, $\gamma_k$, mapping from $B_{1} $ into $\R^3$. Suppose $r_k\searrow 0$ is such that
\[\sup _{r_k<s<1/2} \int _{B_{2s} \setminus B_s } |\nabla \vec n _{k}|^2 \dif{x} \leq \eps_4(\Lambda),\]
and suppose that
\begin{equation}
\|\nabla \lambda _k \|_{L^{2,\infty}(B_{1} )} + \int _{B_{1} } |\nabla \vec n _{k}|^2 \dif{x} +|\alpha_k| + |\beta_k| + |\gamma_k| + \mc A(\vec \Phi_k)+ \|\vec \Phi_k\|_{L^\infty(B_{1})} \leq \Lambda \label{lem: various stuff bound by Lambda}
\end{equation}
for all $k\in \N$. Then for all $\eps \in (0,\eps_4(\Lambda))$, there exists $\alpha \in (0,1)$ such that
\begin{equation}
\limsup_{k\to \infty} \||x| \nabla \vec n_{k}\|_{L^\infty(B_{\alpha } \setminus B_{\alpha ^{-1}r_k} )}\leq \eps.\label{x times nabla n is pointwise bounded by eps}
\end{equation}
In particular,
\begin{equation}
\limsup_{k\to \infty} \|\nabla \vec n_{k}\|_{L^{2,\infty}(B_{\alpha} \setminus B_{\alpha ^{-1}r_k} )} \leq C \eps.\label{implied L2 weak bound for nabla n by eps}
\end{equation}
\begin{proof}
We adapt the proof in \cite[Lemma VII.1]{BernardRiviere}. The main idea here is that due to \Cref{lem:Lp lemma}, the constraints for the rescaled immersions vanish in the limit, which then lets us use the same methods as in \cite[Lemma VII.1]{BernardRiviere}. Similar to the proof of \Cref{lem: L21 control mean curvature}, we may choose $\eps_4$ smaller than $\eps_1$ from \Cref{thm:eps regularity} to deduce that
\begin{equation}
|\nabla \vec n _{k}(x)|^2 \leq C|x|^{-2} \int _{B_{2|x|} \setminus B_{|x|/2} } |\nabla \vec n _{k}|^2 \dif{x} \leq C\eps_4 |x|^{-2} \label{pointwise estimate nabla n}
\end{equation}
for all $x\in B_{1/2} \setminus B_{2r_k} $. In particular,
\begin{equation}
\|\nabla \vec n_{k}\|_{L^{2,\infty}(B_{1/2} \setminus B_{2r_k} )} \leq C \sqrt{\eps_4}.\label{small L2 weak estimate nabla n}
\end{equation}
We assume that there exists a sequence of constrained Willmore immersions $\vec \Phi_k$ on $B_{1} $ satisfying the assumptions of \Cref{lem: L2 weak estimate for gauss map annulus has small energy condition}, but for which we find $\eps'>0$ and $x_k \in B_1 \setminus B_{r_k}$ such that
\begin{equation}
\frac{|x_k|}{r_k} \to \infty, \quad|x_k| \to 0\quad\text{as $k\to \infty$}\label{xk away from rk and Rk}
\end{equation}
and
\[|x_k| |\nabla \vec n_{k} (x_k)|\geq \eps'>0.\]
It follows from \eqref{pointwise estimate nabla n} that
\begin{equation}
\int _{B_{2|x_k|} \setminus B_{|x_k|/2} } |\nabla \vec n _{k} |^2 \dif{x} \geq  \frac{{\eps'}^2}{C}>0.\label{some energy in annulus}
\end{equation}
After a potential shift, we may assume that $\vec \Phi_k(x_k)=0$ for all $k\in \N$. Let us first show that the conformal factors around $x_k$ are small. More precisely, we show
\begin{equation}
\lambda_k(x_k) + \ln|x_k| \to -\infty \quad\text{ as $k\to \infty$.} \label{lambda k plus ln xk diverges}
\end{equation}
Using \Cref{lem: original pointwise lambda estimate}, which we may apply upon choosing $\eps_4$ such that $C \sqrt{\eps_4} $ is smaller than $ \eps_2$, together with \Cref{rem:mean value argument}, there are constants $A_k$ and $d_k$ such that
\begin{equation}
\|\lambda_k(x) - d_k\log |x| -A_k\|_{L^\infty(B_{1/4}\setminus B_{4r_k})}\leq C \quad \text{and}\quad |d_k| \leq C. \label{logarithmic estimate conformal factor L2 weak lemma}
\end{equation}
We obtain that
\begin{equation}
\mc A(\vec \Phi_k) \geq C e^{2A_k} \int_{4r_k}^{1/4} r^{2d_k+1}\dif{r} =C\frac{e^{2A_k}}{d_k+1}\left (4^{-2d_k-2} -(4r_k)^{2d_k+2}\right ).
\end{equation}
In particular,
\begin{equation}
A_k \leq C \quad \text{and}\quad A_k + (d_k+1)\ln(r_k)\leq C.\label{area bounds at the ends of the annulus}
\end{equation}
We apply \Cref{lem:Lp lemma} to deduce that $d_k \to d$ with $|d+1| > \delta$ for some $\delta = \delta (\Lambda)>0$. Suppose first that $d>-1+\delta$. Using \eqref{logarithmic estimate conformal factor L2 weak lemma}, \eqref{area bounds at the ends of the annulus}, \eqref{xk away from rk and Rk}, and $d_k+1 \to d+1 > \delta>0$, we see
\begin{equation}
e^{\lambda_k(x_k) + \ln|x_k|} \leq C |x_k|^{d_k+1} e^{A_k} \leq C |x_k|^{d_k+1} \to 0.\label{conformal factor at xk diverges in case 1}
\end{equation}
If $d<-1-\delta$, then we have
\begin{equation}
e^{\lambda_k(x_k) + \ln |x_k|} \leq C |x_k|^{d_k+1}e^{A_k} \leq C|x_k|^{d_k+1} r_k^{-(d_k+1)}\to 0.\label{conformal factor at xk diverges in case 2}
\end{equation}
\eqref{conformal factor at xk diverges in case 1} and \eqref{conformal factor at xk diverges in case 2} imply \eqref{lambda k plus ln xk diverges}.\medskip

We also choose $\eps_4$ smaller than $\eps_3$ from \Cref{lem: L21 control mean curvature}. Since all the assumptions are satisfied, we may apply \Cref{lem: L21 control mean curvature} to the sequence $\vec \Phi_k$ and apply all the ingredients from its proof. In the case that $d<-1-\delta$, we do the following arguments with $\hat{\vec{\Phi}}_k$ instead of $\vec{\Phi}_k$ and $x_k$ replaced by $r_k x_k^{-1}$, with all arguments being the same. 

From \eqref{Lk pointwise estimate} and \eqref{strong L21 estimate for S and Rk} we have bounds on the quantities $\vec L_{k},  S_{k}$ and $\vec R_{k}$:
\begin{equation}
\|e^{\lambda_k} \vec L_k\|_{L^{2,\infty}(B_{2^{-3} }\setminus B_{2^3r_k})} + \|\nabla S_k\|_{L^{2,1}(B_{2^{-7}}\setminus B_{2^7r_k})} +\|\nabla \vec R_k\|_{L^{2,1}(B_{2^{-7}}\setminus B_{2^7r_k})}  \leq C. \label{L2 weak lemma: L2 estimates L S and R}
\end{equation}
We consider the map $\tilde{\vec {\Phi}}_k$ defined by
\begin{equation}
\tilde{\vec {\Phi}}_k(y) \cqq e^{-\lambda_k(x_k) - \ln|x_k|} \vec \Phi_k(|x_k|y).\label{rescaled map xi k}
\end{equation}
All other quantities corresponding to $\tilde{\vec{\Phi}}_k$ are also denoted by a tilde. This is still a constrained Willmore immersion with coefficients $\tilde{\alpha}_k,\tilde{\beta}_k$ and $\tilde{\gamma}_k$. The scaling behavior of these coefficients is given by \eqref{rescalings of constants}, namely
\begin{align*}
\tilde{\alpha}_k &= e^{2\lambda_k(x_k) + 2\ln |x_k|} \alpha_k,\quad
\tilde{\beta}_k = e^{3\lambda_k(x_k) + 3\ln |x_k|} \beta_k,\quad
\tilde{\gamma}_k = e^{\lambda_k(x_k) + \ln |x_k|} \gamma_k.
\end{align*}
With the uniform bound \eqref{lem: various stuff bound by Lambda} for $\alpha_k$, $\beta_k$ and $\gamma_k$ and the divergence of the rescaling factor, see \eqref{lambda k plus ln xk diverges}, we see
\begin{equation}
|\tilde{\alpha}_k| + |\tilde{\beta}_k| + |\tilde{\gamma}_k| \to 0 \quad\text{as $k\to \infty$}.\label{rescaled constants go to 0}
\end{equation}
From \eqref{rescaled map xi k}, the gradient of the normal vector and the mean curvature scale as
\begin{equation}
\nabla _y \tilde{\vec{n}}_k(y) = |x_k| \nabla_x \vec n _{k}(|x_k|y)\quad \text{and}\quad\nabla_y \tilde{\vec{H}}_{k} (y) = e^{\lambda_k(x_k) + \ln |x_k|} |x_k| \nabla_x\vec{H}_{k} (|x_k|y).\label{rescaling of nabla n and nabla H}
\end{equation}
Notice further that the map $\vec T$, defined in \eqref{definition T}, scales as
\[\tilde{\vec{T}}_k(y) = e^{\lambda_k(x_k) + \ln |x_k|} |x_k| \vec T(|x_k| y). \]
The corresponding map $\tilde{\vec {L}}_k$, defined by \eqref{definition L R and S}, thus satisfies
\begin{equation}
\nabla _y \tilde{\vec {L}}_k(y) = e^{\lambda_k(x_k)  + \ln |x_k|} |x_k| \nabla_x \vec L_k (|x_k| y)\quad\text{so that}\quad\tilde{\vec {L}}_k(y) = e^{\lambda_k(x_k) + \ln |x_k|} \vec L_k(y).\label{rescaling of Lk}
\end{equation}
For the conformal factor, it holds
\begin{equation}
\tilde{\lambda}_k(y) = \lambda_k(|x_k|y) - \lambda_k(x_k).\label{rescaled conformal factor} 
\end{equation}
$Y_k$ is defined on $B_{2^{-3}}$ as in \eqref{definition Y}. The quantities $\vec X_k$ and $Y_k$ scale as
\begin{equation}
\tilde{\vec{X}}_k(y) = |x_k| \vec{X}_k(|x_k|y),\quad \tilde{Y}_k(y) = Y_k(|x_k|y).\label{scaling of Xk and Yk}
\end{equation}
From \eqref{rescaling of nabla n and nabla H}, \eqref{rescaling of Lk}, and \eqref{scaling of Xk and Yk}, the quantities $S_k$ and $\vec R_k$ are scaling invariant according to \eqref{definition L R and S}, namely
\begin{equation}
\tilde{S_k}(y) = S_k(|x_k|y) \quad \text{and}\quad \tilde{\vec{ R}}_k(y) = \vec R_k(|x_k|y).\label{rescaling of Sk and Rk}
\end{equation}
The $L^{2}$-norm of the gradient is invariant under dilations of the domain. Thus, \eqref{L2 weak lemma: L2 estimates L S and R} and \eqref{rescaling of Sk and Rk} show
\begin{equation}
\limsup_{k\to \infty} \Big [\|\nabla \tilde{S_k}\|_{L^{2,1}(K)} +\|\nabla \tilde{\vec{ R}}_k\|_{L^{2,1}(K)} \Big ]\leq C\label{tilde Sk and tilde Rk are bounded in L2}
\end{equation}
for any $K\subset \subset \C\setminus \{0\}$, where the constant $C$ does not depend on $K$. Furthermore, by \eqref{pointwise estimate nabla n}, \eqref{rescaled conformal factor}, and \eqref{logarithmic estimate conformal factor L2 weak lemma},
\begin{equation}
\limsup_{k\to \infty} \|\nabla \vec n _{\tilde{\vec{\Phi}}_k} \|_{L^{\infty}(K)} < C_K \quad \text{and}\quad\limsup_{k\to \infty} \| \tilde{\lambda}_k \|_{L^{\infty}(K)} <C_K.\label{gradient n and conformal factor are locally bounded}
\end{equation}
The $\eps$-regularity, \Cref{thm:eps regularity} shows that for any $l\in \N$, $\tilde{\vec{\Phi}}_k$ converges up to subsequences in $C^l_{\loc}(\C \setminus \{0\})$ to some map $\tilde{\vec{\Phi}}_\infty$, which, by \eqref{rescaled constants go to 0}, is an unconstrained, conformal Willmore immersion. Furthermore, sending \eqref{some energy in annulus} to the limit yields
\begin{equation}
\int _{B_{2} \setminus B_{1/2} } |\nabla \vec n _{\tilde{\vec{\Phi}}_\infty} |^2 \dif{x} \geq \frac{{\eps'}^2}{C}.\label{some energy in annulus in the limit}
\end{equation}
The $\eps$-regularity and \eqref{rescaled constants go to 0} directly yield that 
\begin{equation}
(\tilde{\vec{\Phi}}_k,\tilde{\vec{T}}_k, \tilde{\vec{X}}_k, \tilde{\vec{L}}_k) \to (\tilde{\vec{\Phi}}_\infty,0,0, \tilde{\vec{L}}_\infty)\quad \text{in $C^l(K)$ for $K\subset \subset \C \setminus \{0\}$ and all $l\in \N$.}\label{things passing to the limit}
\end{equation}
We will now obtain similar estimates for $\tilde{Y}_k$. Recall the harmonic function $u_k$ from \eqref{harmonic representation on annulus} satisfies \eqref{uk bounded} and in particular, $\tilde{u}_k(y) \cqq u_k(|x_k| y)$ satisfies
\begin{equation}
\limsup _{k\to \infty}\|\tilde u_k\|_{L^\infty(K)}\leq C \label{tilde u2k bounded}
\end{equation}
where $K\subset \subset \C\setminus \{0\}$ and the constant does not depend on $K$. Up to subsequence, $u_k \wto u_\infty$ in $W^{1,2}(K)$, thus pointwise a.e. up to a subsequence. \eqref{tilde u2k bounded} shows that $u^2_\infty$ is bounded and harmonic in $\C\setminus \{0\}$ and thus a constant $c_0$ by Liouville's theorem. Fix $s>0$. We deduce that $\lim _{k\to \infty}\|\tilde u^2_k-c_0\|_{L^2(B_{2^3s}\setminus B_{2^{-3}s})}=0$. The Poisson representation formula, see also \Cref{rem:various things} \ref{rem:typical argument harmonic functions}, implies 
\begin{equation}
\lim _{k\to \infty}\|\nabla \tilde u_k\|_{L^{2}(B_{4s}\setminus B_{s/4})} =0\label{harmonic part tilde uk L2 estimate}
\end{equation}
From \eqref{strong nabla Yk estimate} and $\lim _{k\to \infty }|x_k|= 0$, see \eqref{xk away from rk and Rk}, we deduce that the rescaled maps $\tilde{Y}^1_k(y) = Y^1_k(|x_k| y)$, where $Y^1_k$ was defined in \eqref{Yk alternative definition}, satisfy
\begin{align}
\|\nabla \tilde{Y}^1_k\|_{L^{2}(B_{4s}\setminus B_{s/4})} &= \|\nabla Y^1_k\|_{L^{2}(B_{4|x_k|s}\setminus B_{|x_k|s/4})} \notag\\
&\leq   \mc L^2(B_{4|x_k|s}\setminus B_{|x_k|s/4})^{\frac{1}{2} - \frac{2-p_2}{2p_2}} \|\nabla Y^1_k\|_{L^{\frac{2p_2}{2-p_2}}(B_{4|x_k|s}\setminus B_{|x_k|s/4})}\notag\\
 &\leq C(s|x_k|)^{2-\frac{2}{p_2}}\notag\\
 &\to 0\quad\text{as $k\to \infty$.}\label{L2 estimate for tilde Y1k}
\end{align}
From \eqref{harmonic part tilde uk L2 estimate} and \eqref{L2 estimate for tilde Y1k}, it follows 
\begin{equation}
\lim _{k\to\infty}\|\nabla \tilde Y_k\|_{L^{2}(B_{4s}\setminus B_{s/4})}= 0.\label{L2 estimate for tilde Yk}
\end{equation}
Consider $\tilde{Y}^2_k$ solving
\[\begin{cases}
\Delta \tilde{Y}^2_k = \Delta \tilde{Y}_k &\quad\text{in $B_{4s}\setminus B_{s/4}$,}\\
\;\;\; \tilde{Y}^2_k = 0 &\quad \text{on $\partial(B_{4s}\setminus B_{s/4})$.}
\end{cases}\]
As $\Delta \tilde{Y}_k = e^{2\tilde{\lambda}_k}(-2\tilde{\alpha}_k + \tilde{\beta}_k \langle \tilde{\vec{\Phi}}_k, \tilde{\vec{n}}_k\rangle - \tilde{\gamma}_k \tilde{H}_k)$, we deduce from \eqref{things passing to the limit}, \eqref{rescaled constants go to 0}, and standard regularity theory that $\tilde{Y}^2_k \to 0$ in $C^l(B_{4s}\setminus B_{s/4})$. Using again \Cref{rem:various things} \ref{rem:typical argument harmonic functions}, \eqref{L2 estimate for tilde Yk} shows
\[\nabla \tilde{Y}^2_k-\nabla \tilde{Y}_k \to 0 \quad \text{in $C^l(B_{2s}\setminus B_{s/2})$}.\]
The choice of $s$ was arbitrary, implying
\begin{equation}
\nabla \tilde{Y}_k \to 0 \quad \text{in $C^l_{\loc}(\C\setminus \{0\})$}\label{Yk to 0 smoothly}
\end{equation}
for all $l\in\N$. We may shift $S_k$ and $\vec R_k$ such that $S_k(x_k)=0$ and $\vec R_k(x_k)=0$. In particular, \eqref{definition L R and S} shows that up to a subsequence
\begin{equation}
\begin{aligned}
\tilde{S}_k &\to \tilde{S}_{\infty}\quad\text{in $C^l_{\loc}(\C\setminus \{0\})$,}\\
\tilde{\vec{R}}_k &\to \tilde{\vec{R}}_\infty \quad\text{in $C^l_{\loc}(\C\setminus \{0\})$.}
\end{aligned} 
\end{equation}
Notice further that \eqref{modified supremum estimate for R} and \eqref{modified supremum estimate for S} imply
\begin{equation}
\|\tilde{S}_\infty\|_{L^{\infty}(\C)}+\|\tilde{\vec{R}}_\infty\|_{L^{\infty}(\C)}\leq C .\label{L infinity estimate for S and R infinity}
\end{equation}
Recall that $\tilde{\vec{\Phi}}_k$ satisfies the conservative system
\begin{equation}
\begin{cases}
\Delta \tilde{\vec{R}}_k&=   \nabla ^\perp \tilde{\vec{n}}_k \cdot \nabla \tilde{S}_k + \nabla ^\perp \tilde{\vec{n}}_k\times \nabla \tilde{\vec{R}}_k  + \Div [\nabla \tilde{Y}_k \tilde{\vec{n}}_k ],\\
\Delta \tilde{S}_k &= \langle \nabla ^\perp \tilde{\vec{n}}_k , \nabla \tilde{\vec{R}}_k\rangle, \\
\Delta \tilde{\vec{\Phi}}_k &= -  \nabla ^\perp \tilde{S}_k\cdot \nabla \tilde{\vec{\Phi}}_k - \nabla ^\perp \tilde{\vec{R}}_k \times \nabla \tilde{\vec{\Phi}}_k - \frac{\tilde{\beta}_k}{2} |\tilde{\vec{\Phi}}_k|^2 e^{2\tilde{\lambda}_k } \tilde{\vec{n}}_k.
\end{cases}
\end{equation}
By \eqref{things passing to the limit} and \eqref{Yk to 0 smoothly}, the conservative system passes to the limit and all terms involving the constraints vanish:
\begin{equation}
\begin{cases}
\Delta \tilde{\vec{R}}_\infty&=   \nabla ^\perp \vec n_{\infty} \cdot \nabla \tilde{S}_\infty + \nabla ^\perp \vec n_{\infty} \times \nabla \tilde{\vec{R}}_\infty ,\\
\Delta \tilde{S}_\infty &= \langle \nabla ^\perp \vec n_{\infty} , \nabla \tilde{\vec{R}}_\infty\rangle, \\
\Delta \tilde{\vec{\Phi}}_\infty &= -  \nabla ^\perp \tilde{S}_\infty\cdot \nabla \tilde{\vec{\Phi}}_\infty - \nabla ^\perp \tilde{\vec{R}}_\infty \times \nabla \tilde{\vec{\Phi}}_\infty .
\end{cases} \label{conservative system in limit}
\end{equation}
This system a priori only holds in $\C \setminus \{0\}$. However, \eqref{tilde Sk and tilde Rk are bounded in L2} shows that
\[\|\nabla \tilde{S}_{\infty}\|_{L^{2,1}(\C)} + \|\nabla \tilde{\vec{R}}_{\infty}\|_{L^{2,1}(\C)}\leq C\]
and since $\nabla \vec n _{\infty} \in L^2(\C)$ as well, the conservative equation for $ \tilde{S}_{\infty}$ and $\tilde{\vec{R}}_\infty$ extend through $0$.
\eqref{pointwise estimate nabla n} passes to the limit as well, so $\|\nabla \vec n_{\infty}\|_{L^{2,\infty}(\C)}\leq C \sqrt{\eps_4}$. We use the argument from \Cref{rem:various things} \ref{rem:typical argument harmonic functions} to apply \cite[Theorem 3.4.5]{Helein} to $\tilde{\vec{R}}_{\infty}$ and $\tilde{S}_{\infty}$ on some ball $B_{s_2}$, where $s_2>0$. The resulting harmonic terms $\nu^S$ and $\vec \nu^R$ satisfy by \eqref{harmonic part is good inside half ball}
\[\|\nabla \nu^S\|_{L^\infty(B_{s_2/2})}+\|\nabla \vec \nu^R \|_{L^\infty(B_{s_2/2})} \leq \frac{C}{s_2}\]
and hence for $0<s_1<s_2/2$, it holds 
\[\|\nabla \nu^S\|_{L^{2,1}(B_{s_1})} + \|\nabla \vec \nu^R \|_{L^{2,1}(B_{s_1})} \leq C\frac{s_1}{s_2}.\]
Sending $s_2\to \infty$, we see
\begin{align}
\|\nabla \tilde{S}_{\infty}\|_{L^{2,1}(B_{s_1})} + \|\nabla \tilde{\vec{R}}_{\infty}\|_{L^{2,1}(B_{s_1})} &\leq \liminf _{s_2\to \infty}C \sqrt{\eps_4} \left (\|\nabla \tilde{S}_{\infty}\|_{L^{2,1}(B_{s_2})} + \|\nabla \tilde{\vec{R}}_{\infty}\|_{L^{2,1}(B_{s_2})}\right ) + C \frac{s_1}{s_2}   \notag\\
& \leq C \sqrt{\eps_4} \left (\|\nabla \tilde{S}_{\infty}\|_{L^{2,1}(\C)} + \|\nabla \tilde{\vec{R}}_{\infty}\|_{L^{2,1}(\C)}\right ).\label{Estimate L2 for S infty and R infty}
\end{align}
\eqref{Estimate L2 for S infty and R infty} holds for any $s_1>0$, and so
\begin{equation}
\|\nabla \tilde{S}_{\infty}\|_{L^{2,1}(\C)} + \|\nabla \tilde{\vec{R}}_{\infty}\|_{L^{2,1}(\C)} \leq C \sqrt{\eps_4} \left (\|\nabla \tilde{S}_{\infty}\|_{L^{2,1}(\C)} + \|\nabla \tilde{\vec{R}}_{\infty}\|_{L^{2,1}(\C)}\right ). \label{nabla S and nabla R have to be zero}
\end{equation}
Choosing $\eps_4$ sufficiently small such that $C \sqrt{\eps_4} < 1$ then yields that $\nabla \tilde{S}_{\infty}=0$ and $\nabla \tilde{\vec{R}}_{\infty}=0$. Inserting this into \eqref{conservative system in limit} then yields $2 e^{2\tilde{\lambda}_{\infty}} \tilde{\vec{H}}_\infty = \Delta \tilde{\vec{\Phi}}_{\infty} = 0$ and so $\tilde{\vec{\Phi}}_{\infty}$ is the conformal immersion of a complete \emph{minimal} surface. From here, we can finish the proof exactly as in \cite[Lemma VII.1]{BernardRiviere} and bring \eqref{some energy in annulus in the limit} to a contradiction.
\end{proof}
\end{lemma}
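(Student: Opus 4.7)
The plan is proof by contradiction. If the conclusion fails, there exist $\eps' > 0$ and points $x_k \in B_1 \setminus B_{r_k}$ with $|x_k|/r_k \to \infty$, $|x_k| \to 0$, and $|x_k||\nabla \vec n_k(x_k)| \geq \eps'$; the $\eps$-regularity \Cref{thm:eps regularity} then forces a definite amount of Dirichlet energy into the annulus $B_{2|x_k|}\setminus B_{|x_k|/2}$. After a translation, I would rescale to $\tilde{\vec\Phi}_k(y) \cqq e^{-\lambda_k(x_k)-\ln|x_k|}\vec\Phi_k(|x_k|y)$, whose Lagrange multipliers transform via \eqref{rescalings of constants} into $e^{2(\lambda_k(x_k)+\ln|x_k|)}\alpha_k$, etc.

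The first decisive step is to prove $\lambda_k(x_k) + \ln|x_k| \to -\infty$, forcing all three rescaled multipliers to vanish in the limit. Combining \Cref{lem: original pointwise lambda estimate}, which writes $\lambda_k(x) \approx d_k \log|x| + A_k$, with the dichotomy $|d+1| > \delta$ furnished by \Cref{lem:Lp lemma}, I would treat the regimes $d > -1+\delta$ and $d < -1-\delta$ separately. In the first case the area bound gives $A_k \leq C$, so $e^{\lambda_k(x_k)+\ln|x_k|} \lesssim |x_k|^{d_k+1} \to 0$; in the second case the same argument applies after passing to the reflected immersion $\hat{\vec\Phi}_k(z)=\vec\Phi_k(r_k/z)$ and to the reflected point $r_k/x_k$.

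Next, \Cref{thm:eps regularity} together with the logarithmic conformal factor bound yields smooth subsequential convergence $\tilde{\vec\Phi}_k \to \tilde{\vec\Phi}_\infty$ in $C^l_{\loc}(\C\setminus\{0\})$ to a conformal, unconstrained Willmore immersion carrying a definite amount of Dirichlet energy on $B_2 \setminus B_{1/2}$. The conservation quantities $\vec L_k, S_k, \vec R_k, Y_k$ scale controllably, and the uniform $L^{2,1}$-bounds on $\nabla S_k, \nabla \vec R_k$ provided by \Cref{lem: L21 control mean curvature} pass to the limit, producing $\tilde S_\infty, \tilde{\vec R}_\infty$ with $L^{2,1}$ gradients on $\C$. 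For the constraint-carrying function $\tilde Y_k$, I would split off a harmonic correction, control its logarithmic mode using \Cref{lem:L21 estimate for Poisson in divergence form} together with a balancing-term bound of the form $|\mc B_k|\log(1/r_k) \leq C$, and exploit $|x_k|\to 0$ and the vanishing of the rescaled multipliers to deduce $\nabla \tilde Y_k \to 0$ in $C^l_{\loc}(\C\setminus\{0\})$.

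Passing the conservative system \eqref{conservative system} to the limit then gives a Wente-type system for $(\tilde S_\infty, \tilde{\vec R}_\infty)$ driven only by $\nabla \vec n_\infty$, which satisfies the global bound $\|\nabla \vec n_\infty\|_{L^{2,\infty}(\C)} \leq C\sqrt{\eps_4}$ inherited from the small-energy hypothesis. Applying Wente's inequality \cite[Theorem 3.4.5]{Helein} on a large ball $B_{s_2}$, splitting off the boundary-harmonic piece as in \Cref{rem:various things} \ref{rem:typical argument harmonic functions}, and letting $s_2 \to \infty$ absorbs the harmonic contribution and yields the self-improving bound
\[\|\nabla \tilde S_\infty\|_{L^{2,1}(\C)} + \|\nabla \tilde{\vec R}_\infty\|_{L^{2,1}(\C)} \leq C\sqrt{\eps_4}\left(\|\nabla \tilde S_\infty\|_{L^{2,1}(\C)} + \|\nabla \tilde{\vec R}_\infty\|_{L^{2,1}(\C)}\right).\]
For $\eps_4$ small this forces both gradients to vanish, hence $\Delta \tilde{\vec\Phi}_\infty = 0$, so $\tilde{\vec\Phi}_\infty$ is a complete non-trivial conformal minimal immersion with small $L^{2,\infty}$-Gauss curvature, contradicting the classification argument used in \cite[Lemma VII.1]{BernardRiviere}. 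The main obstacle will be the third paragraph: controlling $\nabla \tilde Y_k$ in the limit requires Lorentz-space estimates on annuli that are independent of the conformal class, since the rescaling brings the puncture at $0$ into focus and $Y_k$ is where all three constraint parameters $\alpha_k, \beta_k, \gamma_k$ appear simultaneously.
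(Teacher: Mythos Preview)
Your proposal is correct and follows essentially the same approach as the paper: contradiction setup, the divergence $\lambda_k(x_k)+\ln|x_k|\to-\infty$ via \Cref{lem: original pointwise lambda estimate} and the $|d+1|>\delta$ dichotomy of \Cref{lem:Lp lemma}, rescaling so the Lagrange multipliers vanish, smooth convergence to an unconstrained Willmore limit, transfer of the $L^{2,1}$-bounds on $\nabla S_k,\nabla\vec R_k$ from \Cref{lem: L21 control mean curvature}, the vanishing of $\nabla\tilde Y_k$ (which the paper obtains by recycling the decomposition $Y_k=Y^1_k+u_k$ from the proof of \Cref{lem: L21 control mean curvature}, using Liouville on the bounded harmonic part $\tilde u_k$ and the higher $L^{2p_2/(2-p_2)}$-integrability of $\nabla Y^1_k$ for the remainder), and finally the Wente self-improvement on expanding balls forcing $\nabla\tilde S_\infty=\nabla\tilde{\vec R}_\infty=0$ and minimality. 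Your identification of the $\tilde Y_k$ step as the main obstacle is exactly right.
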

\section{Proof of main results}\label{sec:Proof main result}
\begin{proof}[Proof of \Cref{thm:Energy quantization} and \Cref{thm:continuation of energy quantization}]
Suppose $\vec \Phi_k:\Sigma \to \R^3$ denotes a sequence of constrained Willmore immersions with coefficients $\alpha_k,\beta_k,\gamma_k$, area $\mc A(\vec \Phi_k)$ and Willmore energy $\mc W(\vec \Phi_k)$ satisfying
\[\Lambda_1 \cqq \limsup_{k\to \infty} \Big[\mc W(\vec \Phi_k) + \mc A(\vec \Phi_k) +|\alpha_k| + |\beta_k| + |\gamma_k|\Big] < \infty.\]
We may assume that $\mc A(\vec \Phi_k)=1$ for all $k$ and $0\in \vec \Phi_k(\Sigma)$. Up to a subsequence, we may assume that the constraints $\alpha_k$, $\beta_k$, and $\gamma_k$ converge. We assume that the conformal structures induced by the metric $g_k = \vec \Phi_k^*g_{\R^3}$ remain in a compact subset of the moduli space. Using \cite[Corollary 4.4]{RiviereLectureNotes}, we find Lipschitz diffeomorphisms $f_k$ of $\Sigma$ such that $\vec \xi_k \cqq \vec \Phi_k \circ f_k$ are conformal as maps from $(\Sigma, h_k)$ into $\R^3$, where we denote by $h_k$ the constant scalar curvature metric of unit volume associated to $g_{k}$. These converge to a limiting metric $h_\infty$ in $C^l(\Sigma)$ for all $l\in \N$. We denote by $\tilde{\lambda}_k$ the conformal factor induced by $\vec \xi_k$, i.e.,
\[\vec \xi_k^* g_{\R^3} = g_k = e^{2\tilde{\lambda}_k} h_k.\]
By \cite[Lemma 1.1]{Simon}, we know that 
\[\sqrt{\frac{\mc A(\vec \xi_k)}{\mc W(\vec \xi_k)}}\leq \diam \vec \xi_k(\Sigma) \leq C\sqrt{\mc A(\vec \xi_k) \mc W(\vec \xi_k)} \]
and so
\begin{equation}
\vec \xi_k(\Sigma) \subset B_{C\sqrt{\Lambda_1}} \label{bounded area and bounded diameter}
\end{equation}
for all $k\in \N$. Let $\eps_0>0$ be the constant from \Cref{thm:eps regularity}. A Besicovitch covering argument, as done in \cite{RiviereCrelle}, shows that there exist finitely many energy concentration points $a_1,\ldots, a_n\in \Sigma$ away from which Dirichlet energy of at least $\eps_0$ accumulate. More precisely, for $x\not \in \{a_1,\ldots, a_n\}$, there is $\rho(x)>0$ such that
\begin{equation}
\limsup_{k\to \infty} \int_{B_{\rho(x)}(x)} |d \vec n _{\vec \Phi_k}|^2 _{h_k} \dif{\mu _{h_k}} < \eps_0,\label{radius for which no concentration happens}
\end{equation}
where balls in $\Sigma$ are measured with respect to $h_\infty$. By \cite[Theorem 5.4]{RiviereLectureNotes} it holds that 
\begin{equation}
\sup_{k\in \N}\|d \tilde{\lambda}_k\|_{L^{2,\infty}(\Sigma, h_k)} < \infty. \label{weak bound for gradients of conformal factor}
\end{equation}
Furthermore, by \cite[Lemma 3.3]{RiviereCrelle}, there are constants $C_k$ such that
\[\|\tilde{\lambda}_k - C_k\|_{L^{\infty}\left (\Sigma \setminus \bigcup _{i=1}^n B_{\eps}(a_i)\right )} \leq C_\eps \quad \text{for all $\eps>0$.}\]
It follows from the bounded area that $\limsup_{k\to \infty} C_k < \infty$ (however, $\lim_{k\to \infty} C_k = -\infty$ is not excluded by this).  Choose $z_0 \in \Sigma \setminus \bigcup _{i=1}^n B_{\delta}(a_i)$, where $\delta>0$ is sufficiently small such that this set is non-empty. Owing to the $\eps$-regularity theorem, \Cref{thm:eps regularity}, applied in local converging coordinates in the balls $B_{\rho(x)}(x)$ from \eqref{radius for which no concentration happens}, the rescaled immersions $e^{-C_k} (\vec \xi_k(\cdot) - \vec \xi_k(z_0))$ converge up to a subsequence in $C^l_{\loc}(\Sigma \setminus \{a_1,\ldots, a_n\})$ to a limiting immersion $\vec \zeta_\infty$ which is a constrained Willmore surface on $\Sigma \setminus \{a_1,\ldots, a_n\}$. Around the $a_i$, both branch points and ends may form. If an end forms anywhere, i.e.,
\[\mc A(\vec{\zeta}_\infty) = \infty,\]
this means that $C_k \to -\infty$ as $\limsup_{k\to \infty} e^{-2C_k}\mc A(\vec \xi_k) \leq \Lambda e^{-2\liminf_{k\to \infty}C_k}$ and that $\vec{\zeta}_\infty$ is an unconstrained Willmore immersion on $\Sigma \setminus \{a_1,\ldots, a_n\}$. Here, the fact that $\vec{\zeta}_\infty$ is unconstrained follows from the rescaling of the coefficients in \eqref{rescalings of constants}. 
%In this case, we apply an inversion $I_a$ around a point $a$, where thanks to \cite[Lemma 5.13]{RiviereLectureNotes}, $a$ can be chosen such that
%\[e^{-C_k}\vec \xi_k(\Sigma) \cap B_r(a)=\emptyset\quad \text{for all $k\in \N$}\]
%for some fixed $r>0$. We obtain
%\[I_a \circ (e^{-C_k} \vec \xi_k) \to I_a \circ \tilde{\vec{\xi}}_\infty \qqc \vec{\Phi}_\infty \quad\text{in $C^l_{\loc}(\Sigma \setminus \{a_1,\ldots, a_n\})$}.\]
%$\vec \Phi_\infty$ is an unconstrained Willmore immersion in $\Sigma \setminus \{a_1,\ldots, a_n\}$. In the case that
%\[\mc A(\tilde{\vec{\xi}}_\infty)< \infty,\]
%only branch points occur at the $a_i$ and we simply set $\vec \Phi_\infty = \tilde{\vec{\xi}}_\infty$, which is a constrained Willmore immersion on $\Sigma \setminus \{a_1,\ldots, a_n\}$. 

Fix $i\in \{1,\ldots, n\}$. Choose conformal, converging coordinates $\phi_k:B_1 \to (\Sigma, h_k)$ containing some fixed neighborhood around $a_i$. By an abuse of notation, we will denote the maps $\vec \xi_k \circ \phi_k$ again by $\vec \xi_k$ and the conformal factor by $\lambda_k$. Then \eqref{weak bound for gradients of conformal factor} shows 
\begin{equation}
\Lambda_2 \cqq\sup _{k\in \N} \|\nabla \lambda_k\|_{L^{2,\infty}(B_1)} < \infty.
\end{equation}
Let $\eps>0$ be smaller than the constant $\eps_1(\Lambda_2)$ from \Cref{thm:eps regularity}, $\eps_3(\Lambda_1 +\Lambda_2)$ from \Cref{lem: L21 control mean curvature}, $\eps_4(\Lambda_1 + \Lambda_2)$ from \Cref{lem: L2 weak estimate for gauss map annulus has small energy condition} and smaller than $\eta$ from \cite[Lemma V.1]{BernardRiviere}. With this choice of $\eps$, we apply \Cref{prop:Bubble neck decomposition} to our sequence $\vec \xi_k$ to identify the bubbles and necks of our immersions. The region around concentration points splits into neck regions and bubbles in the sense of \eqref{neck regions and bubble domains disjoint cover}: Up to rescaling and shifting, the neck regions can be expressed as annuli of the form $B_{1} \setminus B_{r_k} $ with $ r_k \to 0$ such that 
\[\sup _{s \in (r_k, 1/2)}\int _{B_{2s} \setminus B_{s} } |\nabla \vec n_k|^2 \dif{x} < \eps.\]
%The bubble domains are of the form
%\begin{equation}
%B(i,j,\alpha, k) \cqq B_{\alpha^{-1}\rho^{i,j}_k}(x^{i,j}_k) \setminus \bigcup_{j'\in K^{i,j}} B_{\alpha \rho^{i,j}_k}(x^{i,j'}_k),\label{main result: bubble domains}
%\end{equation}
%see \eqref{bubble domains}. By the construction from \eqref{definition index set Iij}, $x^{i,j'}_k \in B_{2\rho^{i,j}_k}$ for all $j' \in K^{i,j}$. The bubbles are either true bubbles or pseudo bubbles in the sense of \eqref{true bubble}. The pseudo bubbles are those that don't carry any energy in the limit. 
We may apply \Cref{lem: L21 control mean curvature} in these neck regions to find a universal constant $\alpha_0>0$ such that either
\begin{equation}
\limsup _{k\to \infty} \left \|e^{\lambda_k} \vec H _{k} \right \|_{L^{2,1}(B_{\alpha_0 } \setminus B_{\alpha_0^{-1}r_k} )}\leq C(\Lambda_1+\Lambda_2)\label{L2 strong bound 1}
\end{equation}
or
\begin{equation}
\limsup _{k\to \infty} \left \|e^{\hat{\lambda}_k} \hat{\vec{H}} _{k}  \right \|_{L^{2,1}(B_{\alpha_0 } \setminus B_{\alpha_0^{-1}r_k} )}\leq C(\Lambda_1+\Lambda_2).\label{L2 strong bound 2}
\end{equation}
Here, the hat denotes the quantities corresponding to $\hat{\vec{\xi}}_k:B_{1} \setminus B_{r_k} $ which was defined in \eqref{inverted immersion} as
\[\hat{\vec{\xi}}_k(z) = \vec \xi_k\left (\frac{r_k}{z}\right ).\]
Further, we apply \Cref{lem: L2 weak estimate for gauss map annulus has small energy condition} to obtain that for all $\eps>0$, there is $\alpha_\eps>0$ such that for $0<\alpha < \alpha_\eps$
\begin{align}
&\quad \,\|e^{\lambda_k}\vec H_k\|_{L^{2,\infty}(B_{\alpha }\setminus B_{\alpha^{-1}r_k})} +\|e^{\hat{\lambda}_k} \hat{\vec{H}}_k\|_{L^{2,\infty}(B_{\alpha }\setminus B_{\alpha^{-1}r_k})}\notag \\
&\leq
\|\nabla \vec n_{k}\|_{L^{2,\infty}(B_{\alpha  }\setminus B_{\alpha^{-1}r_k})} +\|\nabla \hat{\vec{ n}}_k\|_{L^{2,\infty}(B_{\alpha  }\setminus B_{\alpha^{-1}r_k})}\notag\\
&< \eps.\label{L2 weak eps  bound}
\end{align}
Suppose without loss of generality that \eqref{L2 strong bound 1} holds, otherwise we work with $\hat{\vec{\xi}}_{k}$ instead. Using that $L^{2,\infty}$ is the dual space of $L^{2,1}$, we obtain by pairing \eqref{L2 strong bound 1} with \eqref{L2 weak eps  bound} on the annulus $B_{\alpha}  \setminus B_{\alpha^{-1} r_k} $ for $0<\alpha < \alpha_0$
\begin{align}
\|e^{\lambda_k} \vec H_k\|^2_{L^{2}(B_{\alpha  } \setminus B_{\alpha^{-1}r_k} )} &= \left \langle e^{ {\lambda}_k}  {\vec{H}} _{k}  , e^{ {\lambda}_k}  {\vec{H}} _{k}  \right \rangle _{L^2(B_{\alpha  } \setminus B_{\alpha^{-1}r_k} )}\leq \left \| e^{ {\lambda}_k}  {\vec{H}} _{k}   \right  \|_{L^{2,1}(B_{\alpha  } \setminus B_{\alpha^{-1}r_k} )} \left \|e^{ {\lambda}_k}  {\vec{H}} _{k} \right \|_{L^{2,\infty}(B_{\alpha  } \setminus B_{\alpha^{-1}r_k} )} \notag\\
&\leq C(\Lambda_1+ \Lambda_2) \cdot  \eps .\label{final estimate for L2 bound for H}
\end{align}
As $\eps$ was arbitrary, we see
\begin{align}
\limsup_{\alpha \to 0} \limsup_{k\to \infty} \|e^{\lambda_k} \vec H_k\|^2_{L^{2}(B_{\alpha  } \setminus B_{\alpha^{-1}r_k} )} = 0.\label{limsup bounded by balancing term}
\end{align}
Using further that $|\nabla \vec n_k|^2 = e^{2\lambda_k}(4 H_{k}^2 - 2K_{k})$ together with \cite[Lemma V.1]{BernardRiviere} yields
\begin{equation}
\lim _{\alpha \to 0} \lim_{k\to \infty} \int_{B_{\alpha } \setminus B_{\alpha^{-1}r_k}  } |\nabla \vec n_k|^2 \dif{x} = 0.\label{no neck energy condition}
\end{equation}
This holds for each neck region, so in particular for the union of the neck regions $\Omega_k(\alpha)$ from \eqref{union of all neck regions},
\begin{equation}
\lim _{\alpha \to 0} \lim_{k\to \infty} \int_{\Omega_k(\alpha)} |\nabla \vec n_k|^2 \dif{x} = 0.\label{no neck energy condition 2}
\end{equation}
We identify the bubbles in the same way as in \cite{BernardRiviere}. A bubble domain is of the form
\begin{equation}
B(i,j,\alpha, k) \cqq B_{\alpha^{-1}\rho^{i,j}_k}(x^{i,j}_k) \setminus \bigcup_{j'\in K^{i,j}} B_{\alpha \rho^{i,j}_k}(x^{i,j'}_k),\label{bubble domain}
\end{equation}
see \eqref{bubble domains}. By the construction from \eqref{definition index set Iij}, $x^{i,j'}_k \in B_{2\rho^{i,j}_k}(x^{i,j}_k)$ for all $j' \in K^{i,j}$. On each bubble domain, no further concentration of energy occurs by \eqref{eq:lower_bound_rho}. The bubbles are either true bubbles or pseudo bubbles in the sense of \eqref{true bubble}. The pseudo bubbles are those that don't carry any energy and immerse planes in the limit. As shown in \cite[(VIII.10)]{BernardRiviere}, the conformal factors are well-behaved in each bubble region, namely for all $\alpha \in (0,1)$, there is $C_\alpha$ such that
\begin{equation}
1\leq \frac{\sup_{x\in B(i,j,\alpha,k)} e^{\lambda_k(x)}}{\inf_{x\in B(i,j,\alpha,k)} e^{\lambda_k(x)}}\leq C_\alpha.\label{Harnack estimate in bubble region}
\end{equation}
We choose an arbitrary point 
\[z^{i,j}_k \in B(i,j,1/M,k) \cap B_{\rho^{i,j}_k}(x^{i,j}_k),\]
where $M>\sqrt{\# K^{i,j}}$ (to ensure that such a point $z^{i,j}_k$ exists) and we set $\lambda(i,j,k ) \cqq \lambda_k(z^{i,j}_k)$. We denote 
\[\tilde{\vec{\xi}}_k(y) \cqq e^{-\lambda(i,j ,k) - \ln(\rho^{i,j}_k)} (\vec{\xi}_k(\rho^{i,j}_ky + x^{i,j}_k) - \vec \xi_k(z^{i,j}_k)).\]
Furthermore, we extract a subsequence such that for all $j'\in K^{i,j}$, the limit
\[ \lim_{k\to \infty} \frac{x^{i,j'}_k - x^{i,j}_k}{\rho^{i,j}_k} = a^{j,j'}_i \in B_2  \]
exists. Since by \eqref{eq:lower_bound_rho}, no energy concentrates in $B(i,j,\alpha,k)$, the $\eps$-regularity yields smooth convergence up to a subsequence, i.e.,
\begin{equation}
\tilde{\vec{\xi}}_k \to \vec{\xi}^{i,j} \quad \text{in}\quad C^l_{\loc}\bigg(\C \setminus \bigcup_{j'\in K^{i,j}}\{a^{j,j'}_i\}\bigg) \label{convergence for the bubbles}
\end{equation}
for all $l\in \N$. Via the stereographic projection $\pi$ from $\mb S^2$ to $\C$, we obtain an immersion $\vec \zeta ^{i,j} \cqq \vec \xi^{i,j} \circ \pi$ which is constrained Willmore\footnote{Notice that the scaling factor $-\lambda(i,j,k) - \ln(\rho^{i,j}_k)$ is bounded from above, since we have bounded area, and so the coefficients of the original sequence cannot diverge.} on $\mb S^2 \setminus \left (\pi^{-1}(\infty)\cup \bigcup _{j'\in K^{i,j}}\{\pi^{-1}(a^{j,j'}_i)\}\right )$.
As before, if 
\[\mc A(\vec \xi^{i,j})=\infty,\]
then this implies that $e^{-\lambda(i,j,k) - \ln \rho^{i,j}_k} \to \infty$ as $k\to \infty$ and so the constraints corresponding to $\tilde{\vec{\xi}}_k$ tend to 0. In particular, $\vec \xi^{i,j}$ is a possibly branched, \emph{unconstrained} Willmore immersion. Using \cite[Lemma 5.13]{RiviereLectureNotes}, we are able to remove the ends via an inversion $\vec \Xi$ such that
\begin{equation}
\vec \Xi \circ \tilde{\vec{\xi}}_k \circ \pi \to \hat{\vec{\zeta}}^{i,j} \cqq \vec \Xi \circ \vec \zeta^{i,j} \quad \text{in}\quad C^l_{\loc}(\mb S^2\setminus \bigcup_{j'\in I^{i,j}}\{a^{j,j'}_i\})\label{inverted limit immersion}
\end{equation}
for all $l\in \N$. Here, $\hat{\vec{\zeta}}^{i,j}$ is an unconstrained Willmore immersion on $\mb S^2 \setminus \bigcup_{j'\in I^{i,j}}\{\pi^{-1}(a^{j,j'}_i)\}$. This finishes the energy quantization. Finally, \eqref{energy quantization: strong convergence everywhere} follows as $\mc W(\vec \Phi_k) \to \mc W(\vec \Phi_\infty)$ implies that no bubbles occur along the sequence, and so in particular the set of energy concentration points $\{a_1,\ldots, a_n\}$ is empty.
\end{proof}
\begin{proof}[Proof of \Cref{thm:Compactness of constrained Willmore surfaces}]
Let $\delta > 0$, $\vec \Phi_k:\Sigma \to \R^3$ be a sequence of constrained Willmore immersions with coefficients $\alpha_k,\beta_k,\gamma_k$ such that 
\begin{equation}
\max\{|\alpha_k|,|\beta_k|,|\gamma_k|\}<1/\delta, \label{compactness: bounded coefficients}
\end{equation}
\begin{equation}
\delta < \mc A(\vec \Phi_k)<1/\delta, \label{compactness: bounded area}
\end{equation}
and
\begin{equation}
\mc W(\vec \Phi_k)< \begin{cases}
\min\{8\pi,\bm{\beta}_p + \max\{\beta^{\mc T}_0(\mc T(\vec \Phi_k)), \beta_0^\iso(\iso (\vec \Phi_k))\} - 4\pi\}-\delta &\quad \text{if $p\geq 1$},\\
8\pi - \delta & \quad \text{if $p=0$}.
\end{cases}\label{compactness:bounded willmore}
\end{equation}
By \eqref{compactness:bounded willmore}, we can apply \cite{CompactnessKuwertSchatzle} to see that the conformal classes associated to $\vec \Phi_k ^* g_{\R^3} \qqc g_k$ remain in a compact subset of the moduli space. In particular, there exist constant scalar curvature metrics $h_k$ of unit volume conformally equivalent to $g_k$ converging in $C^l(\Sigma)$ to a limiting metric $h_\infty$ for all $l\in \N$. Furthermore, we find diffeomorphisms $f_k$ of $\Sigma$ such that $\vec \xi_k \cqq\vec \Phi_k \circ f_k$ are conformal. We denote by $\lambda_k$ the conformal factor of $\vec \xi_k$ with respect to $h_k$. Applying \eqref{energy quantization: strong convergence everywhere} from \Cref{thm:Energy quantization}, it is enough to show that 
\begin{equation}
\lim_{k\to \infty} \mc W(\vec \xi_k) = \mc W(\vec \zeta_\infty),\label{compactness:limit of willmore energy}
\end{equation}
where $\vec \zeta_\infty$ is defined in \eqref{limit immersion on Sigma}. 

Suppose first that $p\geq 1$. If \eqref{compactness:limit of willmore energy} fails, there must be at least one bubble. We choose conformal, converging coordinates around one of the energy concentration points $a_i$ and we choose a bubble that contains no further bubbles\footnote{This means that $K^{i,j} = \emptyset$, which can always be achieved by subsequently choosing more and more concentrated bubbles, which can be done only finitely many times.} centered at $x^{i,j}_k \in \C$ with a radius $\rho^{i,j}_k \to 0$. Then this bubble is a true bubble in the sense of \eqref{true bubble}, which can be seen from the construction \eqref{eq:bubble_energy}. The bubble converges in the sense of \eqref{convergence for the bubbles} to some constrained Willmore immersion $\vec{\zeta}^{i,j}$ on $\mb S^2$. Notice that we get local smooth convergence in all of $\mb S^2 \setminus \{\pi^{-1}(\infty)\}$ as we chose the most concentrated bubble.

Using \cite[Theorem 5.5]{RiviereLectureNotes}, the conformal factors $\lambda_k$ of $\vec \xi_k$ either diverge to $-\infty$ or remain bounded on $\Sigma$ away from the energy concentration points. We thus consider several cases:
\begin{enumerate}[wide=0pt,leftmargin=\parindent, labelsep=0.5em, label = \textbf{Case \arabic*:}]
\item $\lambda_k\to -\infty$ uniformly on $\Sigma \setminus \bigcup _{i=1}^n B_\eps(a_i)$ for any $\eps > 0$.\\
By the arguments from \cite{KMR}, we know that there is exactly one energy concentration point $a_i$ around which area can and must accumulate in the middle. Using the Cut and Fill procedure as done in \cite[Section 4.3]{KMR} for $\mc I$ and in \cite[Section 4.4.4]{MasterThesis} for $\mc T$, we can show that there are $\delta^1_k, \, \delta^2_k \to 0$ such that
\[\liminf _{k\to \infty} \mc W(\vec \xi_k) \geq \bm{\beta}_p +\liminf _{k\to \infty}\max\{\beta^{\mc T}_0(\mc T(\vec \xi_k)+\delta^1_k), \beta_0^\iso(\iso (\vec \xi_k)+\delta^2_k)\} - 4\pi .\]
$\mc T(\vec \xi_k)$ remain bounded by \cite[(1.19)]{MasterThesis} and $\mc I(\vec \xi_k)$ remain bounded by \cite[Theorem 1]{Schygulla}. Hence, up to a subsequence, $\mc T(\vec \xi_k) \to T_0$ and $\mc I(\vec \xi_k)\to I_0$ such that $\beta_0^{\mc T}(T_0)<8\pi$ and $\beta_0^{\mc I}(I_0)<8\pi$, using \cite[Theorem 4.3]{MasterThesis} and \cite[Theorem 3.2]{KMR}. Now, $\beta_0^{\mc T}$ is continuous at $T_0$ by the same arguments as in \cite{MasterThesis} and $\beta_0^{\mc I}$ is continuous at $I_0$ by \cite{Schygulla}. Thus, 
 \[\liminf _{k\to \infty} \mc W(\vec \xi_k) \geq \bm{\beta}_p +\liminf _{k\to \infty}\max\{\beta^{\mc T}_0(\mc T(\vec \xi_k)), \beta_0^\iso(\iso (\vec \xi_k))\} - 4\pi ,\]
 which contradicts \eqref{compactness:bounded willmore}.
\item $\lambda_k$ remains bounded on $\Sigma \setminus \bigcup_{i=1}^n B_{\eps}(a_i)$ for any fixed $\eps>0$. \\
In this case, $\vec \zeta_\infty$ is closed and contains at least $4\pi$ Willmore energy. 
\begin{enumerate}[wide=0pt,leftmargin=\parindent, labelsep=0.5em,label = \textbf{Case \arabic{enumi}\alph*):}]
\item At least one of the bubbles carries some area (before rescaling).\\
In this case, we can apply \cite[Lemma 4.4]{KMR} to see that this bubble contains at least $4\pi$ energy, which directly contradicts \eqref{compactness:bounded willmore}.
\item None of the bubbles carry any area: \\
In particular, the bubble around $x^{i,j}_k$ does not carry area in the limit. If the corresponding limit $\vec \zeta^{i,j}$ does not contain any ends, then $\vec \zeta^{i,j}$ is closed and contains at least $4\pi$ Willmore energy, which contradicts \eqref{compactness:bounded willmore}. If it does contain ends, then there can only be an end at $\infty$ and $\hat{\vec{\zeta}}^{i,j}$ from \eqref{inverted limit immersion} is an unconstrained Willmore immersion on $\mb S^2\setminus \{\pi^{-1}(\infty)\}$. If $\hat{\vec{\zeta}}^{i,j}$ has at least density 2 at $\pi^{-1}(\infty)$, then the Li-Yau inequality implies that $\mc W(\hat{\vec{\zeta}}^{i,j})\geq 8\pi$. It follows that
\begin{equation}
\lim _{k\to \infty} \mc W(\vec \xi_k) \geq \lim _{k\to \infty} \mc W( \vec \Xi_k \circ  (e^{-\lambda(i,j,k) - \ln(\rho^{i,j}_k)} (\vec{\xi}_k(\rho^{i,j}_k \cdot + x^{i,j}_k) - \vec \xi_k(z^{i,j}_k)))  ) \geq \mc W(\hat{\vec{\zeta}}^{i,j})\geq 8\pi,\label{compactness: limit 8pi}
\end{equation}
a contradiction. So the density at infinity is 1. Since $\hat{\vec{\zeta}}^{i,j}$ is unbranched in $\C$, the divergence theorem applied to \eqref{First variation of Willmore 1} implies that the first residue 
\[\frac{1}{2}\int_{B_{\rho} }   2\parpar{r}{\vec H_{\hat{\vec{\zeta}}^{i,j}}} - 3 H_{\hat{\vec{\zeta}}^{i,j}}\parpar{r}{\vec n_{\hat{\vec{\zeta}}^{i,j}}  } -\vec H_{\hat{\vec{\zeta}}^{i,j}}\times \frac{1}{r} \parpar{\theta}{\vec n_{\hat{\vec{\zeta}}^{i,j}}}  \dif{l} = 0\]
vanishes. This allows us to apply the point removability \cite[Corollary 1.5]{BernardRiviereSingularityRemovability} to see that $\hat{\vec{\zeta}}^{i,j}$ is smooth throughout $\pi^{-1}(\infty)$ and in particular is an unconstrained Willmore immersion on $\mb S^2$. As this bubble was a true bubble, it holds
\[\mc E(\vec{\zeta}^{i,j}) \geq \eps_0\]
and so $\vec{\zeta}^{i,j}$ cannot immerse a flat plane. Equivalently, $\hat{\vec{\zeta}}^{i,j}$ cannot immerse a round sphere.
 By the classification of Willmore spheres \cite{Bryant}, it follows that $\mc W(\hat{\vec{\zeta}}^{i,j})\geq 8\pi$ and so the conclusion follows again by \eqref{compactness: limit 8pi}.
\end{enumerate}
\end{enumerate}
In the case $p=0$, thanks to a result by \textcite{MondinoRiviere}, we can always choose the parametrization in such a way that Case 2 holds, i.e., that the conformal factors remain bounded away from the energy concentration points. Now, in Case 2 we showed that $\liminf _{k\to \infty} \mc W(\vec \xi_k)\geq 8\pi$, which yields the claim for $p=0$ and finishes the proof.
\end{proof}

\section{Bounding the Lagrange multipliers}\label{sec:Other stuff}
\begin{lemma}[Boundedness of Lagrange multipliers]\label{lem:boundedness of Lagrange multiplier for minimizers}
Let $\Sigma$ be a genus $p$ surface, $\{\tau_k,\, k\in \N\} \subset \subset I \cqq(0,\sqrt{8\pi})\setminus \{\sqrt{4\pi}\}$ and let $\vec \Phi_k:\Sigma \to \R^3$ be minimizers of the normalized curvature constrained Willmore problem \eqref{total mean curvature problem} with respect to the constraint $\mc T(\vec \Phi_k) = \tau_k$. Then the Lagrange multipliers $\lambda_k$ defined as in \eqref{explicit lagrange multipliers T} satisfy 
\begin{equation}
\sup_{n\in \N} |\lambda _k| < \infty. \label{uniform bound for Lagrange multipliers}
\end{equation}
\end{lemma}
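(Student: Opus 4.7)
The plan is to derive an explicit algebraic expression for $\lambda_k$ by integrating the scalar Euler-Lagrange equation \eqref{extrinsic constrained system} over $(\Sigma,g_k)$, and then bound the resulting formula. Since $\mc W$ and $\mc T$ are scale invariant, I normalize $\mc A(\vec\Phi_k)=1$. For the $\mc T$-problem this gives $\beta_k=0$, $\alpha_k=-\tfrac12\lambda_k\tau_k$, $\gamma_k=\lambda_k$ and $\mc M(\vec\Phi_k)=\tau_k$ via \eqref{explicit lagrange multipliers T}. Integrating \eqref{extrinsic constrained system} against $d\mu_k$, using $\int_\Sigma\Delta_{g_k}H_k\,d\mu_k=0$, Gauss-Bonnet $\int K_k\,d\mu_k=4\pi(1-p)$, and the identity $H^2-K=\tfrac12|\mathring A|^2$, yields the clean relation
\[
\lambda_k\bigl(\tau_k^2-4\pi(1-p)\bigr)=-\int_\Sigma H_k\,|\mathring A_k|^2\,d\mu_k.
\]

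The left-hand factor is bounded away from zero by the hypothesis $\{\tau_k\}\subset\subset(0,\sqrt{8\pi})\setminus\{\sqrt{4\pi}\}$: for $p=0$ because $\tau_k$ avoids $\sqrt{4\pi}$ (the only zero of $\tau^2-4\pi$), and for $p\ge 1$ because $\tau_k^2-4\pi(1-p)=\tau_k^2+4\pi(p-1)\ge\tau_k^2\ge\underline\tau^{\,2}>0$ with $\tau_k$ bounded below. It remains to bound the right-hand side uniformly. By Hölder, $|\int H_k|\mathring A_k|^2\,d\mu_k|\le\mc W(\vec\Phi_k)^{1/2}\|\mathring A_k\|_{L^4(d\mu_k)}^2$. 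The Willmore energy is uniformly bounded since $\mc W(\vec\Phi_k)=\beta_p^{\mc T}(\tau_k)<8\pi$ by \cite[Theorem 1.2, Proposition 1.3]{MasterThesis}, with $\beta_p^{\mc T}$ upper semicontinuous on the compact parameter set. For the $L^4$-bound on $\mathring A_k$, I would combine the $\eps$-regularity \Cref{thm:eps regularity} (which simplifies significantly here because $\beta_k=0$) with the bounded conformal class \cite{CompactnessKuwertSchatzle} for minimizers in the subcritical regime.

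The main obstacle is the uniform $L^4$-bound on $\mathring A_k$: a direct use of our $\eps$-regularity would require a priori control on $|\alpha_k|,|\gamma_k|$ (equivalently on $|\lambda_k|$), which is precisely what we are trying to establish; hence a naïve application would be circular. The resolution is a blow-up/concentration-compactness argument that exploits minimality. If $\|\mathring A_k\|_{L^4}$ fails to be uniformly bounded, one rescales at the point of worst curvature to extract a non-trivial bubble; since minimizers $\vec\Phi_k$ satisfy the strict energy gap $\mc W(\vec\Phi_k)<8\pi$, such a bubble would violate Li-Yau combined with the $4\pi$ bubble threshold, producing the sought contradiction.

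Alternatively, one may bypass the explicit curvature bound entirely by arguing by contradiction directly on $\lambda_k$: if $|\lambda_k|\to\infty$, dividing the full Euler-Lagrange equation \eqref{intro:EL equation} by $\lambda_k$ and passing to a weak limit (using $W^{2,2}$-compactness on the minimizers afforded by the bounded conformal class and the subcritical Willmore energy) yields a limit immersion $\vec\Phi_\infty$ satisfying $\delta\mc T(\vec\Phi_\infty)=0$. Testing this against normal variations gives the pointwise identity $K_\infty=\tau_\infty H_\infty$, which integrates via Gauss-Bonnet to $\tau_\infty^2=4\pi(1-p)$, i.e.\ $\tau_\infty=\sqrt{4\pi}$ for $p=0$ (excluded by hypothesis) and is impossible for $p\ge 1$ since $\tau_\infty>0$. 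This contradicts $\{\tau_k\}\subset\subset I$ and proves \eqref{uniform bound for Lagrange multipliers}.
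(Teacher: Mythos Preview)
Your alternative (second) approach is essentially the paper's proof: argue by contradiction assuming $|\lambda_k|\to\infty$, use the compactness theory for minimizers from \cite{MasterThesis} (bounded conformal class via \cite{CompactnessKuwertSchatzle}, weak $W^{2,2}_{\loc}$ convergence away from finitely many points) to extract a smooth limit $\vec\xi_\infty$, observe that $\delta\mc W(\vec\xi_k)(\vec\phi)$ stays bounded in the divergence formulation so $\delta\mc T(\vec\xi_k)(\vec\phi)\to 0$, pass to the limit to obtain $K_\infty=\tau_\infty H_\infty$, and derive a contradiction. The only genuine difference is the endgame: the paper invokes \cite[Proposition~3.6]{MasterThesis} to conclude the limit is totally umbilic and hence a round sphere with $\tau_\infty=\sqrt{4\pi}$, whereas you integrate $K_\infty=\tau_\infty H_\infty$ directly via Gauss--Bonnet to get $\tau_\infty^2=4\pi(1-p)$. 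Your shortcut is slightly more elementary (and handles $p\ge 1$ in one line), but both rely on the same delicate point you glossed over: the limit $\vec\xi_\infty$ is a priori defined only away from concentration points, and one must know it is unbranched with $\mc W(\vec\xi_\infty)<8\pi$ so that Gauss--Bonnet extends across those points (the paper cites \cite[(2.6)]{MondinoScharrer} for this).

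Your first approach, via the integrated identity $\lambda_k\bigl(\tau_k^2-4\pi(1-p)\bigr)=-\int_\Sigma H_k|\mathring A_k|^2\,d\mu_k$, is a clean observation, but the gap you flag is real and your proposed fix does not close it. Under the target rescaling $\tilde{\vec\Phi}=s\vec\Phi$ with $s\to\infty$ (blow-up), the coefficients scale as $\tilde\alpha=s^{-2}\alpha_k$ and $\tilde\gamma=s^{-1}\gamma_k$ by \eqref{rescalings of constants}; since $\alpha_k=-\tfrac12\lambda_k\tau_k$ and $\gamma_k=\lambda_k$ are themselves unbounded in the contradiction scenario, whether $\tilde\alpha,\tilde\gamma\to 0$ depends on the competition between the blow-up rate and $|\lambda_k|$, which you have no control over. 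Thus the blow-up limit need not be an unconstrained Willmore surface, the $\eps$-regularity \Cref{thm:eps regularity} still cannot be applied uniformly, and the circularity persists. The second approach is the one that actually works.
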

\begin{proof}
Up to subsequence, $\tau_k \to \tau \in I$. Suppose by contradiction that the Lagrange multipliers diverge, i.e., $|\lambda_k| \to \infty$. Up to rescaling and shifting, we may assume that $\mc A(\vec \Phi_k)=1$ and $0\in \vec \Phi_k (\Sigma)$. Using the results from \cite{MasterThesis}, there are bilipschitz diffeomorphisms $f_k$ on $\Sigma$ such that there are finitely many points $\{a_1,\ldots, a_n\} \subset \Sigma$ and $\vec \xi_{\infty}:\Sigma \to \R^3$ such that
\[\vec \xi_k \cqq \vec \Phi_k \circ f_k \wto \vec\xi _{\infty}\quad\text{in $W^{2,2}_{\loc}(\Sigma \setminus \{a_1,\ldots, a_n\})$}.\]
The $\vec \xi_k$ are conformal with respect to constant scalar curvature metrics $h_k$ of unit volume which converge smoothly to a metric $h_\infty$. Furthermore, $\vec \xi _{\infty}$ is a minimizer of \eqref{total mean curvature problem}, $\mc W(\vec \xi _{\infty}) = \lim _{n\to \infty} \mc W(\vec \xi _{n}) < 8\pi$ and $\mc A(\vec \xi_{\infty})=1$. In particular, $\vec \xi _{\infty}$ is unbranched and smooth. Composing with converging charts away from the $a_i$, we may work with $\vec \xi_k,\, \vec \xi_\infty:B_1\to \R^3$ conformal. Let $\vec \phi \in C_c^\infty(B_1, \R^3)$. Testing the Euler-Lagrange equation for the Willmore energy \eqref{First variation of Willmore 1} yields
\begin{equation}
\sup _{k\in \N}\left |\delta \mc W(\vec \xi_k)(\vec \phi)\right | = \sup _{k\in \N}\left | \int_{B_1} \langle \vec H_k, \Delta \vec \phi \rangle -\frac{1}{2} \langle -3H_k \nabla \vec n_k + \vec H_k \times \nabla ^\perp \vec n_k, \nabla \vec \phi \rangle \dif{x}\right | <C<\infty\label{bounded Willmore variation}
\end{equation}
due to the weak $W^{2,2}$ convergence. Then the Euler Lagrange equation in divergence form implies
\begin{align}
|\delta \mc T(\vec \xi_k)(\vec \phi)| &=\left |\int_{B_1} \left \langle \nabla \vec \phi , \frac{\nabla \vec n_k}{2}  + H_k \nabla \vec \xi_k - \frac{\mc T(\vec \xi_k)}{2} \nabla \vec \xi_k\right \rangle \dif{x}\right |\notag\\
&\stackrel{k\to \infty}{\to} \left |\int_{B_1} \left \langle \nabla \vec \phi , \frac{\nabla \vec n_\infty}{2} + H_\infty \nabla \vec \xi_{\infty} - \frac{\mc T(\vec \xi_{\infty})}{2} \nabla \vec \xi_{\infty}\right \rangle \dif{x}\right |\notag\\
&=\left | \int_{B_1} \langle \vec \phi , \vec n_{\infty} \rangle \left ( -K_{\infty}  +  \mc T(\vec \xi _{\infty}) H_{\infty} \right )e^{2\lambda_{\infty}} \dif{x}\right | .\label{explicit T variation}
\end{align}
Here we used $\nabla \vec n_k \wto \nabla \vec n_{\infty}$ in $L^2(B_1)$, $H_k \wto H_{\infty}$ in $L^2(B_1)$ and $\nabla \vec \xi_k \to \nabla \vec \xi_{\infty}$ in $L^2(B_1)$ and the equivalence of the weak and strong formulation of the first variation as $\vec \xi _{\infty}$ is smooth. On the other hand, the assumption $|\lambda_k|\to \infty$ together with \eqref{bounded Willmore variation} yields
\[|\delta \mc T(\vec \xi_k)(\vec \phi)|=  \frac{|\delta \mc W(\vec \xi_k)(\vec \phi)|}{|\lambda_k|} \to 0.\]
As $\vec \phi$ was arbitrary, it follows from \eqref{explicit T variation} that $  K_{\infty}  =  \mc T(\vec \xi _\infty) H_{\infty} $ for all $x\in \Sigma \setminus \{a_1,\ldots, a_n\}$. Furthermore, since $\vec \xi_{\infty}$ is unbranched, the Gauss-Bonnet theorem still holds (see \cite[(2.6)]{MondinoScharrer}) and so the argument of \cite[Proposition 3.6]{MasterThesis} yields that all points in $\Sigma\setminus \{a_1,\ldots, a_n\}$ are umbilic. Thus, $\vec \xi_{\infty}$ immerses a round sphere and $\tau=\mc T(\vec \xi_{\infty}) = \sqrt{4\pi}$, which is a contradiction.
\end{proof}
\begin{remark}\label{rem:iso lagrange multipliers remain bounded}
A similar argument was used in \cite[Lemma 3.2]{KuwertAsymptoticsSmallIso} to show that the Lagrange multiplier for the isoperimetrically constrained problem \eqref{isoperimetric problem} remain bounded in $(\sqrt[3]{36\pi}+\delta,\infty)$ for any $\delta>0$.
\end{remark}
A direct corollary of the previous result is that the profile curve of $\beta_p^{\mc T}$ is Lipschitz away from the end points.
\begin{cor}[Lipschitz continuity of $\beta^{\mc T}_p$]\label{cor:Lipschitz continuity of graph}
The profile curves $\beta^{\mc T}_p$ are locally Lipschitz continuous on $I=(0,\sqrt{8\pi})\setminus \{\sqrt{4\pi}\}$. 
%\textcolor{blue}{The fact that the numerical simulations suggest that the minimum further out is attained by a round sphere plus some other minimizer suggests that the Lagrange multiplier are bounded \emph{everywhere}, even by an explicit constant. Look at notes in block.}
\end{cor}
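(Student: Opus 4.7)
The plan is to combine the uniform boundedness of the Lagrange multipliers from \Cref{lem:boundedness of Lagrange multiplier for minimizers} with the $C^l$-compactness of minimizers from \Cref{cor:Compactness minimizers} and the Lagrange multiplier identity $\delta\mc W(\vec\Phi_\tau)=\lambda_\tau\,\delta\mc T(\vec\Phi_\tau)$ to realise the elementary variational upper bound
\[\beta^{\mc T}_p(\tau')-\beta^{\mc T}_p(\tau)\le\lambda_\tau(\tau'-\tau)+O(|\tau'-\tau|^2)\]
uniformly on compact subsets of $I$, from which local Lipschitz continuity is immediate.

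First I would fix a compact set $K\subset I$ and take an arbitrary sequence $\tau_k\in K$ with $\tau_k\to\tau_0$. By \Cref{cor:Compactness minimizers}, after passing to a subsequence and reparametrising, minimizers $\vec\Phi_{\tau_k}$ with $\mc A(\vec\Phi_{\tau_k})=1$ converge in $C^l(\Sigma)$ for every $l$ to a minimizer $\vec\Phi_{\tau_0}$, and by \Cref{lem:boundedness of Lagrange multiplier for minimizers} the associated Lagrange multipliers $\lambda_k$ are uniformly bounded and (along a further subsequence) converge to some $\lambda_0\in\R$. This gives uniform $C^l$-bounds on the minimizers and on all geometric quantities ($\vec n,H,K,e^{\lambda}$) as $\tau$ ranges over $K$.

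Next I would construct a suitable variation. Since $\tau_0\neq\sqrt{4\pi}$, the limit $\vec\Phi_{\tau_0}$ is not a round sphere, and the argument from the proof of \Cref{lem:boundedness of Lagrange multiplier for minimizers} shows that the function $\psi_{\tau_0}\cqq-K_{\tau_0}/\sqrt{\mc A_{\tau_0}}+\mc T(\vec\Phi_{\tau_0})H_{\tau_0}/\mc A_{\tau_0}$ does not vanish identically on $\Sigma$. Consequently there exists a smooth $f\in C^\infty(\Sigma)$ with $\delta\mc T(\vec\Phi_{\tau_0})(f\vec n_{\tau_0})\neq0$, and by $C^l$-convergence of the minimizers we may use the same $f$ (extended via the converging charts) for all $k$ large enough, getting
\[|\delta\mc T(\vec\Phi_{\tau_k})(f\vec n_{\tau_k})|\ge c_0>0\]
uniformly. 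Applying the implicit function theorem to $t\mapsto\mc T(\vec\Phi_{\tau_k}+tf\vec n_{\tau_k})$ yields, for $\tau'$ sufficiently close to $\tau_k$, a smooth parameter $t_k(\tau')$ with $t_k(\tau_k)=0$, $|t_k'|\le C$, realising the constraint value $\tau'$.

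Finally I would estimate the Willmore energy along this perturbation. Writing $\vec\Phi_{\tau_k}^{t_k(\tau')}\cqq\vec\Phi_{\tau_k}+t_k(\tau')f\vec n_{\tau_k}$ and using $\delta\mc W(\vec\Phi_{\tau_k})=\lambda_k\,\delta\mc T(\vec\Phi_{\tau_k})$ together with the uniform $C^l$-bounds on $\vec\Phi_{\tau_k}$, a Taylor expansion gives
\[\mc W(\vec\Phi_{\tau_k}^{t_k(\tau')})=\mc W(\vec\Phi_{\tau_k})+\lambda_k(\tau'-\tau_k)+O(|\tau'-\tau_k|^2),\]
with constants controlled only by $K$. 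Since $\vec\Phi_{\tau_k}^{t_k(\tau')}$ is an admissible competitor for $\beta^{\mc T}_p(\tau')$, one gets
$\beta^{\mc T}_p(\tau')-\beta^{\mc T}_p(\tau_k)\le C|\tau'-\tau_k|$, and by symmetry (starting from a minimizer for $\tau'$) the opposite inequality follows. Since $\tau_k$ and $\tau_0$ were arbitrary in $K$, this is local Lipschitz continuity. The main obstacle I anticipate is keeping the Taylor remainder uniform in $\tau_k$; this is handled precisely because \Cref{cor:Compactness minimizers} furnishes $C^l$-compactness of the minimizers for every $l$, giving uniform bounds on all second-variation type terms that appear.
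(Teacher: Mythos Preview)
Your proof is correct, but it takes a different and substantially heavier route than the paper's.

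The paper's argument uses only \Cref{lem:boundedness of Lagrange multiplier for minimizers} together with the continuity of $\beta^{\mc T}_p$ (already known from \cite{MasterThesis}). It invokes a single variational lemma \cite[Lemma~3.7]{MasterThesis} to produce, for each $\tau\in[a,b)$, \emph{one} nearby value $y(\tau)>\tau$ with $|\beta^{\mc T}_p(y(\tau))-\beta^{\mc T}_p(\tau)|<2C(y(\tau)-\tau)$, where $C$ is the uniform bound on the multipliers. From this weak local control it runs an elementary ``maximal interval'' argument: defining $E=\{\tau'\in[a,b]:|\beta^{\mc T}_p(\tau')-\beta^{\mc T}_p(a)|\le 2C(\tau'-a)\}$, one checks that $\sup E\in E$ by continuity and that $\sup E<b$ would contradict the existence of $y(\sup E)>\sup E$ in $E$. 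This gives the Lipschitz bound between any two points of $[a,b]$ without ever needing uniform $C^l$ bounds on minimizers or uniform nondegeneracy of $\delta\mc T$.

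Your approach instead brings in \Cref{cor:Compactness minimizers} to obtain $C^l$-compactness of minimizers, which then yields uniform second-variation control and a uniform implicit-function-theorem neighborhood. This is logically fine (there is no circularity, since \Cref{cor:Compactness minimizers} does not rely on the present corollary), and it makes the final step---a direct Taylor expansion plus the symmetric competitor argument---very transparent. The price is that you are invoking the full energy-quantization machinery of the paper for what is, in the end, a soft consequence of the multiplier bound. The paper's proof is more self-contained and shows that Lipschitz continuity really follows from \Cref{lem:boundedness of Lagrange multiplier for minimizers} alone; yours is shorter to write but front-loads a great deal of analysis into the appeal to \Cref{cor:Compactness minimizers}.
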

\begin{proof}
Let $[a,b]\subset\subset I$. Recall that $\beta^{\mc T}_p$ is continuous on $I$. Using \Cref{lem:boundedness of Lagrange multiplier for minimizers}, we see that
\[C\cqq \sup \left \{|\lambda|, \, \text{$\vec \Phi$ is a minimizer of $\beta_p^{\mc T}$ with $\mc T(\vec \Phi) =\tau\in [a,b]$ and $\delta \mc W(\vec \Phi) = \lambda \delta \mc T(\vec \Phi)$}\right \}<\infty.\]
Testing each such minimizer with a variation as in \cite[Lemma 3.7]{MasterThesis} for each $\tau\in [a,b)$, we see that there is a function $y:[a,b) \to [a,b]$ with $y(\tau)>\tau$ for all $\tau\in [a,b)$ such that
\begin{equation}
\left |\frac{\beta^{\mc T}_p(y(\tau))-\beta^{\mc T}_p(\tau)}{y(\tau)-\tau}\right | < 2C.\label{small variation with bounded lagrange multiplier}
\end{equation}
Let 
\[E \cqq \left \{\tau'\in [a,b],\,  \left |\frac{\beta^{\mc T}_p(\tau')-\beta^{\mc T}_p(a)}{\tau'-a}\right |\leq 2C\right \}.\]
$E$ is nonempty because it contains $y(a)$. Let $\tau_n \in E$ such that $\tau_n \nearrow \tau \cqq \sup (E)\in [a,b]$. Then by continuity
\[\left |\frac{\beta^{\mc T}_p(\tau) - \beta^{\mc T}_p(a)}{\tau-a}\right |\leq 2C\]
and if $\tau<b$, then $y(\tau)\in E$ by \eqref{small variation with bounded lagrange multiplier}, which is a contradiction to $\tau = \sup(E)$. It follows that $b\in E$. Because $a$ and $b$ were arbitrary, the conclusion follows.
\end{proof}

\begin{ack}
The second author acknowledges support by the Studienstiftung des deutschen Volkes.
\end{ack}

\appendix
\renewcommand{\thesection}{Appendix \Alph{section}}
\tocless\section{The bubble-neck decomposition}
\addcontentsline{toc}{section}{Appendix A. The bubble-neck decomposition}
\renewcommand{\thesection}{\Alph{section}}
\label{sec:Appendix Bubble neck decomposition}
In this appendix, we want to revisit the bubble-neck decomposition originally done in \cite[Proposition III.1]{BernardRiviere} and slightly change the way the bubbles are chosen. There are also some slight alterations in the statement. The reason for this is that \cite[(III.14)]{BernardRiviere} cannot be guaranteed in general. It states that each bubble has at least $\eps_0$ energy, excluding the energy of the bubbles contained within. Namely, it might happen that all of the energy in a bubble is concentrated within the bubbles that are contained in it. We call bubbles in which this happens a \emph{pseudo bubble}. The remaining bubbles that do contain some remaining energy will be referred to as \emph{true bubbles}.
\begin{prop}[Bubble-neck decomposition, see {\cite[Proposition III.1]{BernardRiviere}}] \label{prop:Bubble neck decomposition}
Let $\Sigma$ be a closed two-dimensional manifold. Let $\vec{\Phi}_k \in \mc E_{\Sigma}$ be a sequence of weak immersions (see \cite{RiviereCrelle} for a definition) such that
\begin{equation} \label{eq:bounded_second_fundamental}
\int_{\Sigma} |d\vec n_{\vec{\Phi}_k}|^2_{h_k}\dif{\mu_{h_k}} \leq \Lambda,
\end{equation}
where $g_k \cqq \vec{\Phi}_k^* g_{\R^3}$.

We suppose that $g_k$ is conformally equivalent to a constant scalar curvature metric $h_k$ of unit volume such that for all $l \in \N$,
\[
h_k \rightarrow h_{\infty} \text{ in } C^l(\Sigma),
\]
where $h_{\infty}$ is a constant scalar curvature metric of unit volume on $\Sigma$.
Suppose $0<\eps_0<8\pi/3$ and suppose there exist $n$ points $\{a_1, \ldots, a_n\} \subset \Sigma$ such that for $x\in \Sigma$, $x\in \Sigma \setminus \{a_1,\ldots, a_n\}$ if and only if there exists $\rho(x)>0$ such that 
\begin{equation}
\limsup_{k\to \infty} \int_{B_{\rho(x)}(x)} |d \vec n _{\vec \Phi_k}|^2 _{h_k} \dif{\mu _{h_k}} < \eps_0. \label{rho of x radius}
\end{equation}
Here and in the following, the balls $B_{\rho(x)}(x)$ are measured with respect to $h_\infty$. Then there exist a subsequence, still denoted $\vec{\Phi}_k$, $n$ integers $\{Q^1, \dots, Q^n\}$, $n$ sequences of points $(x^{i,j}_k)_{j=1,\dots,Q^i} \subset \Sigma^{Q^i}$, and $n$ sequences of radii $(\rho^{i,j}_k)_{j=1,\dots,Q^i}$ satisfying
\begin{equation} \label{eq:point_convergence}
\lim_{k \to  \infty} x^{i,j}_k = a_i,
\end{equation}
\begin{equation} \label{eq:radius_convergence}
\lim_{k \to  \infty} \rho^{i,j}_k = 0,
\end{equation}
and for all $i\in \{1,\ldots, n\}$ and $j\neq j' \in \{1,\ldots, Q^i\}$
\begin{equation} \label{eq:rho_ratio}
\text{either}\quad \lim_{k \to  \infty} \frac{\rho^{i,j}_k}{\rho^{i,j'}_k} + \frac{\rho^{i,j'}_k}{\rho^{i,j}_k} = \infty \quad \text{or}\quad 
\lim_{k \to  \infty} \frac{|x^{i,j}_k - x^{i,j'}_k|}{\rho^{i,j}_k + \rho^{i,j'}_k} =  \infty.
\end{equation}

Moreover, for all $i\in \{1,\ldots, n\}$ and $j\in \{1,\ldots, Q^i\}$
\begin{equation} \label{eq:bubble_energy}
\int_{B_{\rho^{i,j}_k}(x^{i,j}_k)} |d\vec{n}_{\vec{\Phi}_k}|^2_{h_k} \dif{\mu_{h_k}} \geq \eps_0.
\end{equation}
The set of balls $B_{\rho^{i,j}_k}(x^{i,j}_k)$ are called ``bubbles associated to the sequence $\vec{\Phi}_k$''.

Furthermore, for any $i \in \{1, \dots, n\}$ and for any $j \in \{1, \dots, Q^i\}$, the set of indices\footnote{The doubled radius $2\rho^{i,j}_k$ in the definition is there to catch bubbles which might form around $\partial B_{\rho^{i,j}_k}(x^{i,j}_k)$.}
\begin{equation}
I^{i,j} \cqq \left\{ j' ,\, x^{i,j'}_k \in B_{2\rho^{i,j}_k}(x^{i,j}_k), \, \frac{\rho^{i,j}_k}{\rho^{i,j'}_k} \to \infty \right\}
\label{definition index set Iij}
\end{equation}
is independent of $k$. It is called the set of ``bubbles contained in the bubble $B_{\rho^{i,j}_k}(x^{i,j}_k)$''. Similarly, the ``bubbles directly contained in the bubble $B_{\rho^{i,j}_k}(x^{i,j}_k)$'' is the set of indices 
\begin{equation}
K^{i,j} \cqq I^{i,j}\setminus \bigcup_{j'\in I^{i,j}} I^{i,j'}.\label{definition index set Kij}
\end{equation}
There is a unique bubble $r^i \in \{1,\ldots, Q^i\}$ that includes all the others, i.e., for $\alpha < 1/2$
\begin{equation}
\bigcup_{j=1}^{Q^i} B_{ \alpha^{-1}\rho^{i,j}_k}(x^{i,j}_k) = B_{ \alpha^{-1}\rho^{i,r^i}_k}(x^{i,r^i}_k).\label{root bubble}
\end{equation}
For any $\alpha \in (0,1/2)$, we denote for $i\in \{1,\ldots, n\}$
\[
\Omega^i_k(\alpha) \cqq B_{\alpha}(x^{i,r^i}_k) \setminus B_{\alpha^{-1} \rho^{i,r^i}_k}(x^{i,r^i}_k),
\]
and for $j\in \{1,\ldots, Q^i\}$
\begin{equation}
\Omega^{i,j}_k(\alpha) \cqq \bigcup _{j'\in K^{i,j}} \left (B_{\alpha \rho^{i,j}_k}(x^{i,j'}_k) \setminus  B_{\alpha^{-1} \rho^{i,j'}_k}(x^{i,j'}_k)\right ).\label{eq:neck annuli}
\end{equation}
The sets $\Omega^i_k(\alpha)$ and $\Omega^{i,j}_k(\alpha)$ are the ``$\alpha$-neck regions'' of $\vec{\Phi}_k$. They are the disjoint union of diverging annuli. Let
\begin{equation}
\Omega_k(\alpha) \cqq \bigcup_{i=1}^n \left (\Omega^i_k(\alpha) \cup \bigcup_{j=1}^{Q^i} \Omega^{i,j}_k(\alpha)\right ).
\label{union of all neck regions}
\end{equation}
We define the ``bubble domains'' $B(i,j,\alpha, k) $ as
\begin{equation}
B(i,j,\alpha, k) \cqq B_{\alpha^{-1}\rho^{i,j}_k}(x^{i,j}_k) \setminus \bigcup_{j'\in K^{i,j}} B_{\alpha \rho^{i,j}_k}(x^{i,j'}_k).\label{bubble domains}
\end{equation}
The neck regions and bubble domains form a disjoint union of the energy concentration regions in the sense that
\begin{equation}
\bigcup _{i=1}^n B_{\alpha}(x^{i,r^i}_k) =\Omega _{k}(\alpha) \cup \bigcup _{i=1}^n \bigcup _{j=1}^{Q^i} B(i,j,\alpha,k). \label{neck regions and bubble domains disjoint cover}
\end{equation}
There exists $\alpha_0$ with $0 < \alpha_0 < 1$ independent of $k$ such that for all $\alpha$ with $0<\alpha<\alpha_0$, $i\in \{1,\ldots, n\}$, $j\in \{1,\ldots, Q^i\}$, all $k\in \N$, and all $\rho>0$ such that $B_{2\rho}(x^{i,j}_k)\setminus B_{\rho}(x^{i,j}_k)\subset \Omega_k(\alpha)$, it holds
\begin{equation} \label{eq:double_neck_bound}
\int_{B_{2\rho}(x^{i,j}_k) \setminus B_{\rho}(x^{i,j}_k)} |d\vec{n}_{\vec{\Phi}_k}|^2_{h_k}  \dif{\mu_{h_k}} < \eps_0.
\end{equation}
%, there are positive integers $Q_{i,j}$ and for $l\in \{1,\ldots, Q_{i,j}\}$ finitely many radii $r^{i,j,l}_k$ and $R^{i,j,l}_k$ such that the annuli $B_{R^{i,j,l}_k}(x^{i,j}_k)\setminus B_{r^{i,j,l}_k}(x^{i,j}_k)$ are pairwise disjoint, are contained in $\Omega^{i,j}_k(\alpha_0)$, satisfy
%\[\frac{R^{i,j,l}_k}{r^{i,j,l}_k}\to \infty \quad \text{as $k\to \infty$}\]
%and
%\[\lim_{k\to \infty}\int_{\Omega^{i,j}_k(\alpha)\setminus \bigcup_{l=1}^{Q_{i,j}} B_{R^{i,j,l}_k}(x^{i,j}_k)\setminus B_{r^{i,j,l}_k}(x^{i,j}_k)} |d\vec n_{\vec \Phi_k}|^2_{h_k}\dif{\mu_{h_k}} = 0.\]
We call a bubble $B_{\rho^{i,j}_k}(x^{i,j}_k)$ a ``true bubble'' if it holds
\begin{equation}
\limsup_{\alpha \to 0}\limsup_{k\to \infty} \int_{B(i,j,\alpha,k)} |d \vec n_{\vec \Phi_k}|^2 _{h_k} \dif{\mu_{h_k}} >0 .\label{true bubble}
\end{equation}
Otherwise, we call $B_{\rho^{i,j}_k}(x^{i,j}_k)$ a ``pseudo bubble''. We define
  \begin{equation}\label{eq:smallest_r}
  r^{i,j}_k \cqq \inf \left \{0 < r < \rho^{i,j}_k ,\, \begin{aligned}
  &B_r(x) \subset B_{\alpha^{-1}\rho^{i,j}_k}(x^{i,j}_k) \setminus \bigcup_{j' \in K^{i,j}} B_{\alpha \rho^{i,j}_k}(x^{i,j'}_k) \\
&\int_{B_{r}(x)} |d\vec{n}_{\vec{\Phi}_k}|^2_{h_k} \, \dif{\mu_{h_k}} \geq \eps_0
\end{aligned} 
\right \}.
\end{equation}
Then for all $i\in \{1,\ldots, n\}$ and $j\in \{1,\ldots, Q^i\}$, there holds
\begin{equation} \label{eq:lower_bound_rho}
\liminf_{k \to \infty} \frac{r^{i,j}_k}{\rho^{i,j}_k} > 0.
\end{equation}
\end{prop}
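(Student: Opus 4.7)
The proof will closely follow the strategy of \cite[Proposition III.1]{BernardRiviere}, and I will focus on the two modifications: the admission of \emph{pseudo bubbles} and the new lower bound \eqref{eq:lower_bound_rho}. The starting observation is that since $\eps_0 < 8\pi/3$ and the total Dirichlet energy is bounded by $\Lambda$, any iterative selection procedure that picks disjoint balls of energy at least $\eps_0$ can only terminate after finitely many steps. The task is therefore to organise such a selection so that (i) the bubbles are properly nested, (ii) no $\eps_0$ energy escapes detection between scales, and (iii) the minimality property \eqref{eq:lower_bound_rho} is preserved.

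The bubble extraction would proceed as follows. Fix an energy concentration point $a_i$ and a chart $\phi_i \colon B_1 \to (\Sigma,h_\infty)$ around it. For each $k$, define
\[
\rho^{i,1}_k \cqq \inf\Big\{ r>0 \, : \, \exists x \in B_{1/2}(a_i), \ \int_{B_r(x)} |d\vec n_{\vec\Phi_k}|^2_{h_k} \dif \mu_{h_k} \geq \eps_0\Big\},
\]
and choose a corresponding centre $x^{i,1}_k$. Iteratively, having selected the scales $\rho^{i,1}_k,\ldots,\rho^{i,m}_k$ and centres $x^{i,1}_k,\ldots,x^{i,m}_k$, let $\rho^{i,m+1}_k$ be the minimal radius for which an $\eps_0$-ball can be placed inside $B_{1/2}(a_i)$ after removing all balls $B_{\rho^{i,\ell}_k/4}(x^{i,\ell}_k)$, $\ell\leq m$. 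The process stops once no such ball fits, which happens after $Q^i \leq \Lambda/\eps_0$ steps. Passing to a diagonal subsequence, all ratios $\rho^{i,j}_k/\rho^{i,j'}_k$ and $|x^{i,j}_k-x^{i,j'}_k|/(\rho^{i,j}_k+\rho^{i,j'}_k)$ converge in $[0,\infty]$, which gives \eqref{eq:rho_ratio} after excluding the degenerate case where both limits are finite (which would violate disjointness of the selected $\eps_0$-balls). This also makes the sets $I^{i,j}$ and $K^{i,j}$ independent of $k$. Property \eqref{eq:bubble_energy} is immediate by construction. The root bubble $r^i$ is obtained as the bubble whose index has no parent in $I^{i,\cdot}$; uniqueness follows from \eqref{eq:rho_ratio} and the fact that all bubbles live in $B_{1/2}(a_i)$, giving \eqref{root bubble}. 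The disjoint decomposition \eqref{neck regions and bubble domains disjoint cover} is then a combinatorial consequence of the tree structure on $\{(i,j)\}$ induced by $I^{i,j}$.

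For the double-annulus bound \eqref{eq:double_neck_bound}, suppose for contradiction that there existed radii $\rho_k$ with $B_{2\rho_k}(x^{i,j}_k)\setminus B_{\rho_k}(x^{i,j}_k)\subset \Omega_k(\alpha)$ and $\int_{B_{2\rho_k}(x^{i,j}_k)\setminus B_{\rho_k}(x^{i,j}_k)}|d\vec n|^2_{h_k}\geq \eps_0$. Covering this annulus by a controlled number (independent of $k$) of balls of radius $\rho_k/2$, at least one such ball would carry $\eps_0/N$ energy; refining the argument by choosing $\eps_0$ smaller in the extraction step and applying a Besicovich-type covering, one finds an $\eps_0$-ball at a scale between the two consecutive levels of our hierarchy, which could have been selected at the extraction step, a contradiction for $\alpha_0$ sufficiently small. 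The lower bound \eqref{eq:lower_bound_rho} follows by essentially the same logic applied to the bubble domain $B(i,j,\alpha,k)$: if $r^{i,j}_k/\rho^{i,j}_k \to 0$ along a subsequence, then an $\eps_0$-ball at scale $r^{i,j}_k$ sits inside $B(i,j,\alpha,k)$ and is disjoint from all the directly contained sub-bubbles $B_{\alpha \rho^{i,j}_k}(x^{i,j'}_k)$, so the extraction step would have selected it as an additional bubble strictly between the scale $\rho^{i,j}_k$ and the scales of bubbles in $K^{i,j}$, contradicting the maximality of our list. Pseudo bubbles are handled by simply keeping them in the selection: the classification \eqref{true bubble} is a posteriori and does not affect the construction. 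The main technical point to watch is the passage from the \emph{scale-wise minimal} selection to the combinatorial tree, which requires a careful subsequence argument to ensure that all relative scale ratios stabilise, but this is a standard diagonal extraction once the extraction is performed with a minimality condition at each step.
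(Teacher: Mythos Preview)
Your iterative ``smallest $\eps_0$-ball'' selection is natural, but two steps do not close. First, the dichotomy \eqref{eq:rho_ratio}: your balls only satisfy $B_{\rho^{i,m+1}_k}(x^{i,m+1}_k)\cap B_{\rho^{i,\ell}_k/4}(x^{i,\ell}_k)=\emptyset$ for $\ell\le m$, and since the radii are nondecreasing this yields only $|x^{i,j}_k-x^{i,j'}_k|/(\rho^{i,j}_k+\rho^{i,j'}_k)\ge 1/2$, not divergence. Two bubbles at comparable scales and comparable (rescaled) distance are perfectly compatible with your selection, so the ``degenerate case'' is not excluded. Relatedly, nothing forces a root bubble: if all energy sits in two disjoint blobs at the same scale, your procedure selects two siblings and stops, and \eqref{root bubble} fails. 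Second, the dyadic neck bound \eqref{eq:double_neck_bound}: covering a bad annulus by $N$ balls only produces a ball of energy $\eps_0/N$, and you cannot ``choose $\eps_0$ smaller in the extraction step'' because $\eps_0$ is prescribed in the statement and appears in \eqref{eq:bubble_energy}. So the contradiction never materializes.

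What the paper does differently is to interleave your minimal-ball selection with the annular decomposition \cite[Lemma III.2]{BernardRiviere}. After picking the first (smallest-scale) bubble $B_{\rho^{i,1}_k}(x^{i,1}_k)$, one applies that lemma to the annulus $B_{\alpha^{i,1}}(x^{i,1}_k)\setminus B_{\rho^{i,1}_k}(x^{i,1}_k)$ and obtains alternating $I_0$/$I_1$ annuli: on $I_1$ annuli the ratio of radii diverges and the dyadic property \eqref{eq:double_neck_bound} holds \emph{by construction}; each $I_0$ annulus has bounded conformal modulus and energy $\ge\eps_0$, and its outer boundary is \emph{declared} a new bubble. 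This is the step that manufactures the container (and ultimately the root) bubbles and forces \eqref{eq:rho_ratio}: any two bubbles are separated either by an $I_1$ annulus (scale ratio $\to\infty$) or live in disjoint sub-balls chosen inside an $I_0$ shell with the $2\alpha^{i,l,j}R^l_k$-separation \eqref{choice of alphas for the I0 bubble} (distance ratio $\to\infty$). The lower bound \eqref{eq:lower_bound_rho} then follows because the recursion inside each $I_0$ shell is only stopped once \eqref{in I0 annulus there is more concentration to explore} fails, i.e.\ once no further concentration at scale $o(R^l_k)$ remains. Your contradiction argument for \eqref{eq:lower_bound_rho} is morally the same, but it needs the container bubbles to be in place first.
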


\begin{proof}
Let $0<\eps_0 < 8\pi/3$. Consider
\[\rho_k \cqq \inf \left \{ \rho>0, \, \int_{B_{\rho}(x)} |d \vec n _{\vec \Phi_k}|^2 _{h_k} \dif{\mu_{h_k}} \geq \eps_0 \quad \text{for some }x\in \Sigma\right \},\]
where $B_{\rho}(x)$ denotes the geodesic ball of center $x$ and radius $\rho$ for the constant scalar curvature metric $h_\infty$. If
\[\limsup_{k\to \infty} \rho_k > 0,\]
we can extract a subsequence such that $\rho_k$ converges to a positive constant. By \cite[Theorem 1.1]{RiviereCrelle}, the conformal factors remain bounded and there is no concentration of curvature. Then there is no concentration anywhere and no points $a_i$. 

Hence, in the following, we will assume that $\limsup_{k\to \infty} \rho_k = 0$. This means that there is some $x^{i,1}_k \in \Sigma$ such that
\[\int_{B_{\rho_k}(x^{i,1}_k)} |d\vec n_{\vec \Phi_k}|_{h_k}^2 \dif{\mu_{h_k}} = \eps_0.\]
Up to subsequence, due to \eqref{rho of x radius}, the $x^{i,1}_k$ converge to one of the $a_i$. We set $\rho^{i,1}_k \cqq \rho_k$. By choosing conformal, converging coordinates
$\omega^i_k:B_1\to (\Sigma,h_k)$ containing some fixed neighborhood of $a_i$ and mapping $0$ to $a_i$, we may work with the immersions $\vec \Phi_k \vert _{\omega^i_k(B_1)} \circ {\omega^i_k}$, which we will, by an abuse of notation, relabel from now on as $\vec \Phi_k$. Then $x^{i,1}_k \to 0$ and $\rho^{i,1}_k\to 0$. We choose $\alpha^{i,1}>0$ sufficiently small such that up to subsequence\footnote{Up to subsequence, $|\nabla \vec n_{\vec \Phi_k}|^2 \mc L^2\vert_{B_1}$ converge weakly as measures to some measure $\mu$. Choose $\alpha^{i,1}>0$ sufficiently small such that $\mu(B_{\alpha^{i,1}} ) < \mu(\{0\}) + \eps_0$ and $\mu(\partial B_{\alpha^{i,1}} ) = 0$. The weak convergence yields upper semicontinuity on closed sets, so that for $r\in (0,{\alpha^{i,1}})$
\[\limsup_{k\to \infty} \int_{B_{\alpha^{i,1}}(x^{i,1}_k) \setminus B_r(x^{i,1}_k) } |\nabla \vec n_{\vec \Phi_k}|^2 \dif{x}  \leq \mu(\overline{B_{\alpha^{i,1}}(0) }\setminus B_r(0) ) <\eps_0.\]}
\begin{equation}
\lim_{k\to \infty} \sup  \left \{r\in (0,{\alpha^{i,1}}),\, \int_{B_{{\alpha^{i,1}}}(x^{i,1}_k) \setminus B_{r}(x^{i,1}_k) } |\nabla \vec n_{\vec \Phi_k}|^2 \dif{x} \geq \eps_0\right \}=0.\label{choice of alpha for first bubble}
\end{equation}
We apply \cite[Lemma III.2]{BernardRiviere} to $B_{{\alpha^{i,1}}}(x^{i,1}_k)\setminus B_{\rho^{i,1}_k}(x^{i,1}_k)$ to obtain finitely many radii\footnote{Notice that $N$ is bounded in terms of $\Lambda$ and $\eps_0$.}
\[{\alpha^{i,1}} = R^0_k > R^1_k > \ldots >R^N_k = \rho^{i,1}_k\]
satisfying \cite[(III.18)-(III.20)]{BernardRiviere}. 
We merge adjacent annuli of the same type to get a subsequence of radii, which we will not relabel, in which annuli of type $I_0$ and $I_1$ alternate: For every annulus $l\in I_0$, it holds
\begin{equation}
\lim_{k\to \infty} \frac{R^{l}_k}{R^{l+1}_k} < \infty \quad \text{and}\quad \int_{B_{R^{l}_k}   \setminus B_{R^{l+1}_k} } |\nabla \vec n_{\vec \Phi_k}|^2 \dif{x} \geq \eps_0\label{type I0 annuli}
\end{equation}
and for $l \in I_1$, it holds
\begin{equation}
\lim_{k\to \infty} \frac{R^{l}_k}{R^{l+1}_k} = \infty \label{radius divergence}
\end{equation}
and  
\begin{equation}
\forall \rho \in (R^{l+1}_k, R^{l}_k/2):\quad \int _{B_{2\rho}(x^{i,1}_k)\setminus B_{\rho}(x^{i,1}_k)} |\nabla \vec n_{\vec \Phi_k}|^2 \dif{x} <\eps_0.\label{doubling annulus condition}
\end{equation}
Then \eqref{choice of alpha for first bubble} implies
\[\lim _{k\to \infty}\frac{R^0_k}{R^1_k} = \infty.\]
We will now construct more bubbles. For this, we will look through all annuli of type $I_0$. To start, let $\Omega^{l}_k \cqq B_{R^{l}_k}(x^{i,1}_k)\setminus B_{R^{l+1}_k}(x^{i,1}_k)$ be the innermost annulus of type $I_0$. 
\begin{enumerate}[wide=0pt,leftmargin=\parindent, labelsep=0.5em, label = \textbf{Case \arabic*:}]
\item If $l+1=N$, i.e., the innermost annulus is already of type $I_0$, we move on to the next annulus and do not declare this as a bubble region.
\item $l+1<N$. We declare $B_{R^{l}_k}(x^{i,1}_k)$ as a bubble. Furthermore, we define for $\Omega\subset B_{\alpha^{i,1}} $
\[\rho_k(\Omega) \cqq \inf \left \{\rho>0, \, \int_{B_{\rho}(x)}|\nabla \vec n_{\vec \Phi_k}|^2 \dif{x} = \eps_0 \quad\text{for some $x\in \Omega$}\right \},\]
where this infimum could be $\infty$. 
\begin{enumerate}[wide=0pt,leftmargin=\parindent, labelsep=0.5em,label = \textbf{Case \arabic{enumi}\alph*):}]
\item Suppose that
\begin{equation}
\liminf _{k\to \infty} \frac{\rho_k(\Omega^{l}_k)}{R^{l}_k} > 0.\label{no concentration in the I0 annulus}
\end{equation}
In this case, we choose a subsequence such that $\lim _{k\to \infty} \frac{\rho_k(\Omega^{l}_k)}{R^{l}_k} > 0$. Then the bubble $B_{R^{l}_k}(x^{i,1}_k)$ is a true bubble which contains the bubble $B_{\rho^{i,1}_k}(x^{i,1}_k)$. Additionally, there is no further concentration of energy in this bubble domain due to \eqref{no concentration in the I0 annulus}. Finally, we move on to the next annulus of type $I_0$.
\item Suppose
\begin{equation}
\lim _{k\to \infty} \frac{\rho_k(\Omega^{l}_k)}{R^{l}_k} = 0.\label{Some concentration in the I0 annulus}
\end{equation}
\eqref{Some concentration in the I0 annulus} means there is some concentration of energy inside $\Omega^{l}_k$. Via induction over $j$, we choose finitely many bubbles $B_{\rho^{i,l,j}_k}(x^{i,l,j}_k)$ and radii $\alpha^{i,l,j}>0$ such that
\begin{equation}
\rho^{i,l,j}_k \cqq \rho_k\left (\Omega^{l}_k\setminus \bigcup _{j'=1}^{j-1}B_{2\alpha^{i,l,j'}R^l_k}(x^{i,l,j'}_k)\right ) \quad\text{satisfies}\quad\lim_{k\to \infty} \frac{\rho^{i,l,j}_k}{R^{l}_k} = 0,\label{in I0 annulus there is more concentration to explore}
\end{equation}
\begin{equation}
x^{i,l,j}_k \in \Omega^{l}_k\setminus \bigcup _{j'=1}^{j-1}B_{2\alpha^{i,l,j'}}(x^{i,l,j'}_k)\quad \text{such that}\quad  \int_{B_{\rho^{i,l,j}_k}(x^{i,l,j}_k)} |\nabla \vec n_{\vec \Phi_k}|^2 \dif{x} = \eps_0.\label{choice of concentration points in I0 annulus}
\end{equation}
The $\alpha^{i,l,j}$ are chosen to satisfy, using the same arguments as in \eqref{choice of alpha for first bubble},
\begin{equation}
\lim_{k\to \infty} \sup  \left \{r\in (0,{2\alpha^{i,l,j}}),\, \int_{B_{{2\alpha^{i,l,j}R^{l}_k}}(x^{i,l,j}_k)\setminus B_{r R^{l}_k}(x^{i,l,j}_k)} |\nabla \vec n_{\vec \Phi_k}|^2 \dif{x} \geq \eps_0\right \}=0.\label{choice of alphas for the I0 bubble}
\end{equation}
For $j\neq j'$, the balls $B_{\alpha^{i,l,j}R^l_k}(x^{i,l,j}_k)$ are pairwise disjoint.
\eqref{in I0 annulus there is more concentration to explore} can only be satisfied finitely many often because at each iteration, we exhaust at least $\eps_0$ of the Dirichlet energy. 
%\begin{equation}
%\eps^{i,l}(\alpha)\cqq \liminf_{k\to \infty} \int_{B_{\alpha^{-1}R^{i}_k}(x^{i,1}_k)\setminus B_{\alpha R^{i+1}_k}(x^{i,1}_k)\setminus \bigcup_j B_{\alpha^{i,l,j}}(x^{i,l,j}_k)} |\nabla \vec n_{\vec \Phi_k}|^2 \dif{x}.\label{remainder energy}
%\end{equation}
\eqref{in I0 annulus there is more concentration to explore} ensures that there is no further concentration of energy in $B_{\rho^{i,l,j}_k}(x^{i,l,j}_k)$. \eqref{choice of concentration points in I0 annulus} ensures that each bubble has an energy of $\eps_0$. We apply again \cite[Lemma III.2]{BernardRiviere}, this time to the disjoint annuli $B_{\alpha^{i,l,j}R^{l}_k}(x^{i,l,j}_k)\setminus B_{\rho^{i,l,j}_k}(x^{i,l,j}_k)$ and repeat the procedure. We do this for each annulus of type $I_0$ and each neighborhood of the $a_i$. The process has to terminate as each $I_0$ annulus exhausts at least $\eps_0$ energy. 
\end{enumerate}
\end{enumerate} 
In this construction, the bubble $r^i$ from \eqref{root bubble} is the one satisfying $\rho^{i,r^i}_k = R^1_k$ (unless $N=2$, which corresponds to Case 1, for which $\rho^{i,r^i}_k = R^2_k$). We choose a subsequence and reindex the family of bubbles around each $a_i$ as $\{B_{\rho^{i,j}_k}(x^{i,j}_k),\, j\in \{1,\ldots, Q^i\}\},$ where $Q^i$ is a fixed integer. It remains to check that this satisfies the statements from the proposition. 
\begin{enumerate}[wide=0pt,leftmargin=\parindent, labelsep=0.5em,label = ]
\item[\textbf{Eq. \eqref{eq:point_convergence}, \eqref{eq:radius_convergence}, \eqref{eq:bubble_energy} and \eqref{root bubble}:}] These are immediate from the construction and \eqref{radius divergence}.
\item[\textbf{Eq. \eqref{eq:rho_ratio}:}] We need to check that for any two bubbles $B_{\rho^{i,j}_k}(x^{i,j}_k), B_{\rho^{i,j'}_k}(x^{i,j'}_k)$, $j\neq j'$, it holds
\begin{equation}
\frac{|x^{i,j}_k - x^{i,j'}_k|}{\rho^{i,j}_k + \rho^{i,j'}_k} \to \infty\label{centers far apart}
\end{equation} 
or
\begin{equation}
 \frac{\rho^{i,j}_k}{\rho^{i,j'}_k}+\frac{\rho^{i,j'}_k}{\rho^{i,j}_k}\to \infty\quad\text{as $k\to \infty$}.\label{radii on different scales}
\end{equation}
\eqref{centers far apart} occurs precisely when the bubbles are disjoint. \eqref{choice of alpha for first bubble}, \eqref{radius divergence}, and \eqref{choice of alphas for the I0 bubble} ensure that for the remaining cases, \eqref{radii on different scales} holds. 
\item[\textbf{Eq. \eqref{eq:double_neck_bound}:}] We choose $\alpha$ to be smaller than all the $\alpha^{i,j}$ and $\alpha^{i,l,j}$ chosen in \eqref{choice of alpha for first bubble} and \eqref{choice of alphas for the I0 bubble} and smaller than all ratios $\frac{R^{l+1}_k}{R^{l}_k}$, where $B_{R^{l+1}_k}(x^{i,j}_k)\setminus B_{R^l_k}(x^{i,j}_k)$ was some $I_0$ annulus during the construction process. This then ensures that all annuli $B_{2\rho}(x^{i,j}_k)\setminus B_{\rho}(x^{i,j}_k)\subset \Omega_k(\alpha_0)$ are contained in the union of the $I_1$ annuli and the union of the regions with small energy coming from \eqref{choice of alpha for first bubble}, \eqref{choice of alphas for the I0 bubble}, so that \eqref{eq:double_neck_bound} is implied by \eqref{doubling annulus condition}.
\item[\textbf{Eq. \eqref{eq:lower_bound_rho}:}] This statement follows as the induction process in Case 2b) is stopped only after \eqref{in I0 annulus there is more concentration to explore} fails, which ensures that there is no concentration of energy left\footnote{Notice that there is also no concentration in any of the $I_1$ annuli due to \eqref{doubling annulus condition}.}.
\end{enumerate}
\end{proof}

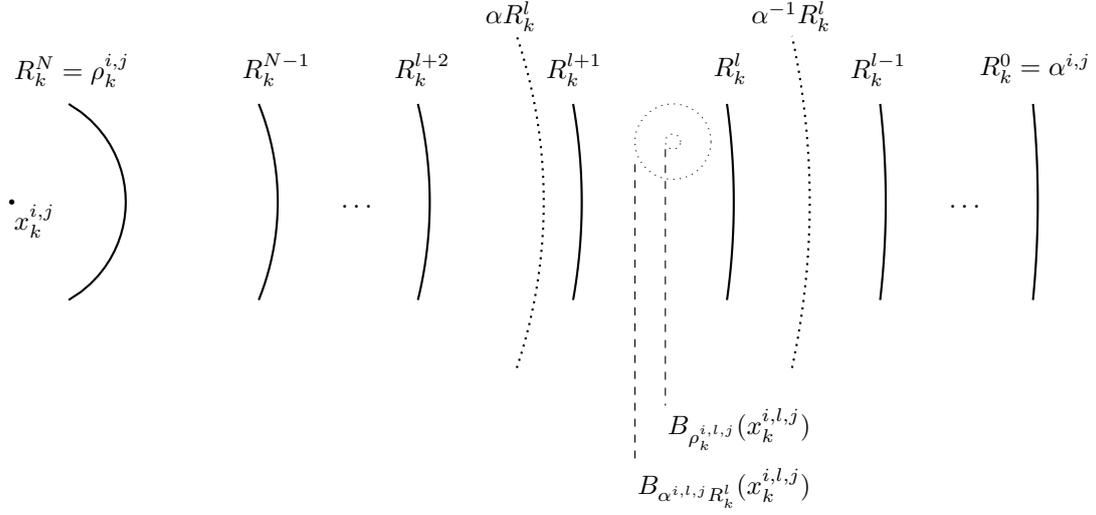
\begin{figure}[H] 
\centering 

\begin{tikzpicture}
\foreach \R in {0,...,6} {
   \draw [black, thick, domain=-1.3:1.3] plot ({sqrt((2*\R+1.5)^2 - (\x)^2)}, \x);
}
\node at (0,0) [circle,fill,inner sep=0.7pt]{};

\draw (0,2) node [anchor=north west][inner sep=0.75pt]    {$R^N_k = \rho^{i,j}_k$};

\draw (3,2) node [anchor=north west][inner sep=0.75pt]    {$R^{N-1}_k$};

\draw (5,2) node [anchor=north west][inner sep=0.75pt]    {$R^{l+2}_k $};
\draw (7,2) node [anchor=north west][inner sep=0.75pt]    {$R^{l+1}_k $};
\draw (9.2,2) node [anchor=north west][inner sep=0.75pt]    {$R^{l}_k $};
\draw (11,2) node [anchor=north west][inner sep=0.75pt]    {$R^{l-1}_k $};
\draw (12.7,2) node [anchor=north west][inner sep=0.75pt]    {$R^0_k = \alpha^{i,j}$};
\draw (4.3,0) node [anchor=north west][inner sep=0.75pt]    {$\ldots$};
\draw (12.3,0) node [anchor=north west][inner sep=0.75pt]    {$\ldots$};
\draw (0,0) node [anchor=north west][inner sep=0.75pt]    {$x^{i,j}_k$};
\draw (6.2,2.7) node [anchor=north west][inner sep=0.75pt]    {$\alpha R^{l}_k$};
\draw (9.7,2.7) node [anchor=north west][inner sep=0.75pt]    {$\alpha^{-1} R^{l}_k$};

\draw (8.6,-2.7) node [anchor=north west][inner sep=0.75pt]    {$B_{\rho^{i,l,j}_k}(x^{i,l,j}_k)$};
\draw (8.2,-3.5) node [anchor=north west][inner sep=0.75pt]    {$B_{\alpha^{i,l,j}R^{l}_k}(x^{i,l,j}_k)$};
   \draw [black, thick, dotted, domain=-2.2:2.2] plot ({sqrt(7^2 - (\x)^2)}, \x);
   \draw [black, thick, dotted, domain=-2.2:2.2] plot ({sqrt(10.5^2 - (\x)^2)}, \x);
   
   \draw[dotted] (8.7,0.8) circle (0.5);
   
   \draw[dotted] (8.7,0.8) circle (0.1);
   \draw[dashed] (8.6,0.7) -- (8.6, -2.7);
   \draw[dashed] (8.2,0.5) -- (8.2, -3.5);
   
\end{tikzpicture}
\caption{The construction of the bubbles.}

\end{figure}

\renewcommand{\thesection}{Appendix \Alph{section}}
\tocless\section{Derivation of conservative system}
\addcontentsline{toc}{section}{Appendix B. Derivation of conservative system}
\renewcommand{\thesection}{\Alph{section}}
\label{sec:Appendix Derivation Conservative System}
Suppose that $\vec \Phi:B_1 \to \R^3$ is a constrained Willmore surface with coefficients $\alpha,\beta,\gamma$ in a conformal parametrization. Denote 
\begin{equation}
\vec {\mc W} = -\frac{1}{2}(\nabla \vec H - 3\pi_{\vec n}(\nabla \vec H) + \nabla^\perp \vec n \times \vec H)\label{gradient of Willmore}
\end{equation}
and recall from \eqref{definition T} that
\begin{equation}
\vec T = - \alpha \nabla \vec \Phi + \frac{\beta}{2} \vec \Phi \times \nabla^\perp \vec \Phi - \frac{\gamma}{2} (\nabla \vec n + 2H \nabla \vec \Phi).\label{gradient of constraints}
\end{equation}
By \eqref{constrained Willmore surface}, it holds
\begin{equation}
\Div(\vec {\mc W} + \vec T) = 0.\label{constrained Willmore equation in appendix}
\end{equation}
By the Poincar\'e lemma applied to \eqref{constrained Willmore equation in appendix}, there is $\vec L:B_1\to \R^3$ satisfying
\begin{equation}
\nabla^\perp \vec L = -\vec {\mc W} - \vec T.\label{definition L in appendix}
\end{equation}
Then it holds, using also \cite[(7.19)+(7.20)]{RiviereLectureNotes} for the fifth equality
\begin{align}
\Div(\langle \vec L, \nabla^\perp \vec \Phi\rangle ) &= - \langle \nabla ^\perp \vec L, \nabla \vec \Phi\rangle\notag\\
&= -\left \langle \frac{1}{2}(\nabla \vec H - 3\pi_{\vec n}(\nabla \vec H) + \nabla^\perp \vec n \times \vec H)- \vec T, \nabla \vec \Phi\right \rangle\notag\\
&= -\left \langle \frac{1}{2}( H \nabla \vec n + \nabla^\perp \vec n \times \vec H) - \vec T, \nabla \vec \Phi\right \rangle\notag\\
 &= \langle \vec T, \nabla \vec \Phi \rangle - \frac{1}{2} \langle - (H \nabla \vec n + \vec H \times \nabla ^\perp \vec n) + 2 H \nabla \vec n, \nabla \vec \Phi \rangle \notag\\
 &=\langle \vec T, \nabla \vec \Phi \rangle - \frac{1}{2} (4 e^{2\lambda} H^2 - 4e^{2\lambda}H^2) = \langle \vec T, \nabla \vec \Phi\rangle\notag\\
 &= e^{2\lambda}(-2\alpha) + \frac{\beta}{2} \langle\nabla ^\perp \vec \Phi \times \nabla \vec \Phi , \vec \Phi \rangle - \frac{\gamma}{2} \langle \nabla \vec n + 2H \nabla \vec \Phi, \nabla \vec \Phi \rangle\notag\\
 &= e^{2\lambda} ( -2\alpha + \beta \langle \vec n, \vec \Phi\rangle - \gamma H ). \label{divergence form for balancing integrand}
\end{align}
We set $Y:B_1\to \R$ to be the solution to 
\begin{equation}
\begin{cases}
\Delta Y = e^{2\lambda} ( -2\alpha + \beta \langle \vec n, \vec \Phi\rangle - \gamma H )&\quad\text{in $B_1$,}\\
\;\;\;Y=0&\text{on $\partial B_1$.}
\end{cases}\label{definition Y in appendix}
\end{equation}
It follows
\begin{equation}
\Div(\langle \vec L, \nabla^\perp \vec \Phi\rangle - \nabla Y) = 0.\label{divergence freeness for S in appendix}
\end{equation}
Applying again the Poincar\'e lemma to \eqref{divergence freeness for S in appendix}, there is $S:B_1\to \R$ such that
\begin{equation}
\langle \vec L , \nabla^\perp \vec \Phi \rangle - \nabla Y = \nabla^\perp S.\label{definition S in appendix}
\end{equation}
Next, it holds
\begin{equation}
\Div(\vec L \times \nabla ^\perp \vec \Phi) = \nabla \vec \Phi \times \nabla^\perp \vec L = \nabla \vec \Phi \times (- \vec{\mc W} - \vec T).\label{divergence form for R in appendix}
\end{equation}
It holds by \cite[(7.22)]{RiviereLectureNotes}
\begin{align}
\nabla \vec \Phi \times (-\vec{\mc W}) &= \frac{1}{2} \nabla \vec \Phi \times (\nabla \vec H - 3\pi_{\vec n}(\nabla \vec H) + \nabla^\perp \vec n \times \vec H) = \frac{1}{2} \nabla \vec \Phi \times (\nabla \vec H - 3\nabla H \vec n + \nabla^\perp \vec n \times \vec H)\notag\\
&= \frac{1}{2} \nabla \vec \Phi \times (-2 \nabla \vec H + 3H \nabla \vec n + \nabla^\perp \vec n \times \vec H)=\frac{1}{2} \nabla \vec \Phi \times ((\nabla^\perp \vec n \times \vec H - H \nabla \vec n) + (4H \nabla \vec n - 2\nabla \vec H))\notag\\
&= \frac{1}{2} \nabla \vec \Phi \times (2H \nabla \vec n - 2\nabla H \vec n)= \underbrace{\nabla \vec \Phi \times \nabla \vec n }_{=0} H -  (\nabla \vec \Phi \times \vec n) \cdot \nabla H\notag\\
&= \nabla ^\perp \vec \Phi \cdot  \nabla  H .\label{setup for R in appendix}
\end{align}
It follows from \eqref{divergence form for R in appendix} and \eqref{setup for R in appendix}
\begin{align}
\Div(\vec L \times \nabla^\perp \vec \Phi - \nabla ^\perp \vec \Phi H) &= - \nabla \vec \Phi \times \vec T \notag\\
&=- \nabla \vec \Phi \times \left (- \alpha \nabla \vec \Phi + \frac{\beta}{2} \vec \Phi \times \nabla^\perp \vec \Phi - \frac{\gamma}{2} (\nabla \vec n + 2H \nabla \vec \Phi)\right )\notag\\
&= -\frac{\beta}{2} \vec \Phi \langle \nabla \vec \Phi, \nabla^\perp \vec \Phi\rangle +\frac{\beta}{2}\nabla^\perp \vec \Phi \cdot \langle \nabla \vec \Phi,  \vec \Phi\rangle\notag \\
 &=\frac{\beta}{4} \nabla (|\vec \Phi|^2) \cdot \nabla ^\perp \vec \Phi.\label{final setup for R in appendix}
\end{align}
We set $\vec X =\frac{\beta}{4} |\vec \Phi|^2 \nabla^\perp \vec \Phi$. Then the right-hand side of \eqref{final setup for R in appendix} equals $\Div(\vec X)$ and so applying the Poincar\'e lemma a third time to \eqref{final setup for R in appendix} yields that there is $\vec R:B_1\to \R$ such that
\begin{align}
\nabla^\perp \vec R &= \vec L \times \nabla^\perp \vec \Phi - \nabla ^\perp \vec \Phi H - \vec X =\vec L \times \nabla^\perp \vec \Phi + \vec H \times \nabla  \vec \Phi  - \vec X. \label{definition R in appendix}
\end{align}
Furthermore, as in \cite[(7.28)]{RiviereLectureNotes}, using the definition \eqref{definition S in appendix} and \eqref{definition R in appendix},
\begin{align}
\vec n \times \nabla^\perp \vec R &= \vec n \times (\vec L \times \nabla^\perp \vec \Phi + \vec H \times \nabla  \vec \Phi - \vec X) = -\langle \vec L ,\nabla^\perp \vec \Phi \rangle \vec n + \vec L \times \nabla \vec \Phi - \vec H \times \nabla^\perp  \vec \Phi- \frac{\beta}{4} |\vec \Phi|^2 \nabla \vec \Phi\notag\\
&= - (\nabla^\perp S + \nabla Y)  \vec n + \vec L \times \nabla  \vec \Phi - \vec H \times \nabla^\perp  \vec \Phi - \frac{\beta}{4} |\vec \Phi|^2 \nabla \vec \Phi\notag\\
&= - (\nabla^\perp S + \nabla Y)  \vec n  + \nabla \vec R.\label{setup alternative definition for nabla R in appendix}
\end{align}
Rewriting \eqref{setup alternative definition for nabla R in appendix} shows
\begin{equation}
\nabla \vec R = \vec n \times \nabla ^\perp \vec R+\nabla ^\perp S \vec n + \nabla Y \vec n.\label{definition nabla R alternative in appendix}
\end{equation}
\eqref{definition nabla R alternative in appendix} also yields
\begin{equation}
\langle \vec n, \nabla^\perp \vec R\rangle = - \langle \vec n \times \nabla \vec R , \vec n\rangle - \nabla S + \nabla^\perp Y \label{setup alternative definition nabla S in appendix}
\end{equation}
and so
\begin{equation}
\nabla S = - \langle \vec n, \nabla^\perp \vec R\rangle + \nabla^\perp Y.\label{nabla S alternative definition in appendix}
\end{equation}
Finally, as $\Delta \vec \Phi  = 2 e^{2\lambda} \vec H$, we see
\begin{align}
\nabla \vec \Phi \times \nabla^\perp \vec R &= \nabla \vec \Phi \times (\vec L \times \nabla^\perp \vec \Phi) + \nabla \vec \Phi \times (\vec H \times \nabla \vec \Phi) - \nabla \vec \Phi \times \vec X = - \langle \nabla \vec \Phi, \vec L \rangle \cdot \nabla ^\perp \vec \Phi + 2e^{2\lambda}\vec H - \nabla \vec \Phi \times \vec X\notag\\
&= - (\nabla S - \nabla^\perp Y)\cdot \nabla^\perp \vec \Phi + \Delta \vec \Phi - \nabla \vec \Phi \times \vec X.\label{setup Laplace Phi}
\end{align}
From \eqref{setup Laplace Phi}, we conclude
\begin{equation}
\Delta \vec \Phi = \nabla \vec \Phi \times \nabla^\perp \vec R + \nabla S \cdot \nabla^\perp \vec \Phi - \nabla Y \cdot \nabla \vec \Phi - \frac{\beta}{2} |\vec \Phi| ^2 e^{2\lambda }\vec n.\label{Laplace Phi in appendix}
\end{equation}
\eqref{definition nabla R alternative in appendix}, \eqref{nabla S alternative definition in appendix}, and \eqref{Laplace Phi in appendix} yield the conservative system \eqref{conservative system}.

\printbibliography

(C. Scharrer) \textsc{Institute for applied mathematics, University of Bonn, Endenicher Allee 60, 53115 Bonn, Germany.}\\
\emph{Email address:} \href{mailto:scharrer@iam.uni-bonn.de}{scharrer@iam.uni-bonn.de} 
\medskip\newline
(A. West) \textsc{Institute for applied mathematics, University of Bonn, Endenicher Allee 60, 53115 Bonn, Germany.}\\
\emph{Email address:} \href{mailto:west@iam.uni-bonn.de}{west@iam.uni-bonn.de} 
\end{document}